\newtheorem{lem}{Lemma}[section]
\newtheorem{thm}[lem]{Theorem}
\newtheorem{prop}[lem]{Proposition}
\newtheorem{cor}[lem]{Corollary}
\theoremstyle{definition}
\newtheorem{exa}[lem]{Example}
\newtheorem{rem}[lem]{Remark}
\newcommand{\Q}{\Bbb{Q}}
\newcommand{\F}[1]{\Bbb{F}_{#1}}
\newcommand{\Z}{\Bbb{Z}}
\newcommand{\R}{\Bbb{R}}
\newcommand{\g}{{\gamma}}
\newcommand{\spl}[2]{\mathrm{SL}_{#1}(#2)}
\newcommand{\gl}[2]{\mathrm{GL}_{#1}(#2)}
\newcommand{\lsl}[2]{\mathrm{sl}_{#1}(#2)}
\newcommand{\pcong}[2]{\Gamma(#1,#2)}
\newcommand{\tcong}[2]{\Gamma_{0}(#1,#2)}
\newcommand{\ucong}[2]{\Gamma_{1}(#1,#2)}
\newcommand{\tpcong}[2]{\tilde{\Gamma}(#1,#2)}
\newcommand{\card}[1]{\left| #1 \right|}
\renewcommand{\forall}{\mbox{ for all }}
\newcommand{\psyl}[2]{{#1}_{(#2)}}
\renewcommand{\ker}[1]{\mathrm{Ker}(#1)}
\newcommand{\coker}[1]{\mathrm{Coker}(#1)}
\newcommand{\sgr}[1]{\mathrm{R}_{#1}}
\newcommand{\an}[1]{\left\langle{#1}\right\rangle}
\newcommand{\pf}[1]{\left\langle\!\left\langle{#1}\right\rangle\!\right\rangle}
\newcommand{\gw}[1]{\mathrm{GW}(#1)}
\newcommand{\aug}[1]{\mathcal{I}_{#1}}
\newcommand{\matr}[4]{\left[\begin{array}{cc}
#1&#2\\
#3&#4\\
\end{array}
\right]}
\newcommand{\bor}{B}
\newcommand{\sgn}[1]{\mathrm{sgn}(#1)}
\newcommand{\diag}[3]{\mathrm{diag}(#1,#2,#3)}
\newcommand{\ntr}[1]{\mathcal{O}_{#1}}
\newcommand{\cl}[1]{\mathrm{Cl}(#1)}
\newcommand{\mwk}[2]{K^{\mathrm{\small MW}}_{#1}({#2})}
\newcommand{\milk}[2]{K^{\mathrm{\small M}}_{#1}({#2})}
\newcommand{\mil}[2]{ \{ #1,#2 \} }
\newcommand{\st}[2]{\mathrm{St}(#1,#2)}
\newcommand{\sti}[1]{\mathrm{St}(#1)}
\newcommand{\ho}[3]{\mathrm{H}_{#1}(#2,#3 )}
\newcommand{\hoz}[2]{\ho{#1}{#2}{\Z}}
\newcommand{\coh}[3]{\mathrm{H}^{#1}(#2,#3 )}
\title{The second homology  of $\mathrm{SL}_2$ of 
 $S$-integers}
\author{Kevin Hutchinson}
\address{School of Mathematical Sciences,
 University College Dublin}
\email{kevin.hutchinson@ucd.ie}
\date{\today}
\keywords{
$K$-theory, Group Homology
}
\subjclass{19G99, 20G10}
\begin{document}
\bibliographystyle{plain}
\maketitle

\begin{abstract}
We calculate the structure of the finitely generated groups 
$\hoz{2}{\spl{2}{\Z[1/m]}}$  when 
$m$ is a multiple of $6$. 
Furthermore, we show how to construct  homology classes, represented by cycles in the 
bar resolution, which generate these groups and have prescribed orders.
 When $n\geq 2$ and 
$m$ is the product of the first 
$n$ primes, we combine our results with those of Jun Morita to 
show that the projection $\st{2}{\Z[1/m]}\to \spl{2}{\Z[1/m]}$ is the universal central 
extension.
Our methods have 
wider applicability: The main result on the structure of the second homology of certain 
rings is valid for rings of $S$-integers with sufficiently many units. 
For a wide class of rings $A$, we construct 
explicit homology classes in $\hoz{2}{\spl{2}{A}}$, functorially 
dependent on a pair of units, which correspond to symbols in $K_2(2,A)$. 

\end{abstract}

\section{Introduction}\label{sec:intro}
We calculate the structure of the finitely generated groups 
$\hoz{2}{\spl{2}{\Z[1/m]}}$  -- the Schur multiplier of $\spl{2}{\Z[1/m]}$ -- when 
$m$ is a multiple of $6$ (Theorem \ref{thm:z} below). 
Furthermore, we show how to construct explicit homology classes, in the 
bar resolution, which generate these groups and have prescribed orders 
(sections \ref{sec:classes} and 
\ref{sec:h2}). Our methods have 
wider applicability, however: The main result on the structure of the second homology of certain 
rings is valid for rings of $S$-integers with sufficiently many units. The homology classes 
which we construct make sense over any ring in which $6$ is a unit.

For a ring $A$ satisfying some finiteness  conditions the homology groups $\hoz{2}{\spl{n}{A}}$ 
are naturally isomorphic to the $K$-theory group $K_2(A)$ when $n$ is sufficiently large. 
However, $n=2$ is rarely sufficiently large, even when $A$ is a field. 

We review some background results (see Milnor \cite{mil:intro} for details).  For a 
commutative ring $A$, the \emph{unstable} $K_2$-groups
of the ring $A$, $K_2(n,A)$,  are defined to be the kernel of a surjective homomorphism
$\st{n}{A}\to E_n(A)$ where $\st{n}{A}$ is the rank $n-1$ Steinberg group of $A$ 
and where $E_n(A)$ is the 
subgroup of $\spl{n}{A}$ generated by elementary matrices.
There are compatible homomorphisms $\st{n}{A}\to \st{n+1}{A}$, $E_n(A)\to E_{n+1}(A)$, and 
taking direct limits as $n\to\infty$, we obtain a surjective map $\sti{A}\to E(A)$ whose kernel 
is $K_2(A):=\lim K_2(n,A)$. In fact, $K_2(A)$ is central in $\sti{A}$ and the extension 
\[
1\to K_2(A)\to \sti{A}\to E(A)\to 1
\]
is the universal central extension of $E(A)$ and hence $\hoz{2}{E(A)}\cong K_2(A)$.
 Furthermore, for a commutative ring $A$, 
$E(A)=\mathrm{SL}(A)=\lim \spl{n}{A}$.  

When $A$ satisfies some reasonable finiteness conditions 
these statements remain true when $K_2(A)$, $\sti{A}$ and $E(A)$ are replaced with 
$K_2(n,A)$, $\st{n}{A}$ and $E_n(A)$ for all sufficiently large $n$. In particular, when 
$F$ is a field with at least $10$ elements $\hoz{2}{\spl{2}{F}}\cong  K_2(2,F)$. 

When $F$ is  a global field  and when $S$ is a nonempty set of primes of $F$ containing 
the infinite primes, we let $\ntr{S}$ denote the corresponding ring of $S$-integers. (For 
example if $F=\Q$ and $1<m\in \Z$, we have $\Z[1/m]=\ntr{S}$ where $S$ consists of the 
primes dividing $m$ and the infinite prime.) Now the groups $\hoz{2}{\spl{2}{\ntr{S}}}$ and 
$K_2(2,\ntr{S})$ are finitely-generated abelian groups which satisfy 
\[
\lim_S \hoz{2}{\spl{2}{\ntr{S}}}=\hoz{2}{\spl{2}{F}}\mbox{ and }
\lim_S K_2(2,\ntr{S})=K_2(2,F).
\] 
It is natural to guess that we might have $\hoz{2}{\spl{2}{\ntr{S}}}\cong K_2(2,\ntr{S})$ 
when $S$ is sufficiently large in some appropriate sense. The example of $\ntr{S}=\Z$, 
when $\hoz{2}{\spl{2}{\Z}}=0$ while $K_2(2,\Z)\cong\Z$ shows that some condition on $S$ will 
be required. 

In the current paper, rather than comparing $\hoz{2}{\spl{2}{\ntr{S}}}$ to $K_2(2,\ntr{S})$ 
directly, 
we introduce a convenient proxy for $K_2(2,\ntr{S})$ which we denote 
$\tilde{K}_2(2,\ntr{S})$ (see section \ref{sec:main} below 
for definitions). There are natural maps 
\[ 
\hoz{2}{\spl{2}{\ntr{S}}}\to \tilde{K}_2(2,\ntr{S}),\quad
 K_2(2,\ntr{S})\to \tilde{K}_2(2,\ntr{S})
\]
 and the structure of the group $\tilde{K}_2(2,\ntr{S})$ is easy 
to describe (see Lemma \ref{lem:tilde}): 
\[
\tilde{K}_2(2,\ntr{S})\cong K_2(\ntr{S})_+\oplus \Z^r
\]
where $K_2(\ntr{S})_+$ is the subgroup of totally positive elements of $K_2(\ntr{S})$ 
and $r$ is the number of real embeddings of $F$. 

Our main theorem (\ref{thm:s}) states that when $S$ is sufficiently large (see the 
statement for more details) that the 
natural map $ \hoz{2}{\spl{2}{\ntr{S}}}\to \tilde{K}_2(2,\ntr{S})$ is an isomorphism. In the 
case $F=\Q$, the condition that $S$ be sufficiently large reduces to the requirement that 
$2,3\in S$. In particular, when $6|m$, we obtain  isomorphisms
\[
\hoz{2}{\spl{2}{\Z[1/m]}}\cong \tilde{K}_2(2,\Z[1/m])\cong \Z\oplus\left( \oplus_{p|m}\F{p}^\times
\right).
\] 

Jun Morita (\cite{morita:zs}) proved isomorphisms of the form 
\[
K_2(2,\Z[1/m])\cong \tilde{K}_2(2,\Z[1/m])
\]
for certain integers $m$ (eg. if $m$ is the product of the first $n$ prime numbers). 
Combining Morita's results with those above we deduce that 
\[
\hoz{2}{\spl{2}{\Z[1/m]}}\cong K_2(2,\Z[1/m])
\] 
for such $m$, and that, consequently, the extension
\[
1\to K_2(2,\Z[1/m])\to \st{2}{\Z[1/m]}\to \spl{2}{\Z[1/m]}\to 1
\]
is a universal central extension.

The main tool we use to prove Theorem \ref{thm:s} is the expression of 
$\spl{2}{\ntr{S\cup \{\mathfrak{p}\}}}$ as an amalgamated product 
\[
\spl{2}{\ntr{S}}\ast_{\tcong{\ntr{S}}{\mathfrak{p}}} H(\mathfrak{p})
\] 
associated to the action of $\spl{2}{\ntr{S\cup \{\mathfrak{p}\}}}$ on the Serre tree corresponding 
to the discrete valuation of the prime ideal $\mathfrak{p}$. This decomposition gives a 
Mayer-Vietoris sequence in homology. Analysis of the terms and the maps in low dimension 
yields, for $S$ sufficiently large, an exact sequence 
\[
\xymatrix{
\hoz{2}{\spl{2}{\ntr{S}}}\ar[r]
&\hoz{2}{\spl{2}{\ntr{S\cup \{\mathfrak{p}\}}}}\ar^-{\delta}[r]
& \hoz{1}{k(\mathfrak{p})} \to 0.
}
\]
where the map $\delta$ is essentially the tame symbol of $K$-theory (see Theorem \ref{thm:mv}). 
This analysis requires, in particular, the deep and beautiful theorem of Vaserstein and Liehl 
(\cite{vaserstein:sl2} and \cite{liehl}) and the solution of the congruence subgroup problem 
for $\mathrm{SL}_2$ (Serre, \cite{serre:sl2}). 

In the later part of the paper, we tackle an old question in $K_2$-theory; namely, how 
to write down natural homology classes in $\hoz{2}{\spl{2}{A}}$, 
depending functorially on a pair of units $u,v\in A^\times$, which correspond, 
under the map $\hoz{2}{\spl{2}{A}}\to K_2(2,A)$ when it exists, to the symbols 
$c(u,v)\in K_2(2,A)$.  The answer to the corresponding question for 
$\hoz{2}{\spl{3}{A}}$ and $K_2(3,A)$ is well-known, namely the homology class (in the bar 
resolution)
\[
\left([\diag{u}{u^{-1}}{1}|\diag{v}{1}{v^{-1}}]-[\diag{v}{1}{v^{-1}}|\diag{u}{u^{-1}}{1}]
\right)\otimes 1
\]
corresponds to the symbol $\mil{u}{v}\in K_2(3,A)$, at least up to sign. 
There is no such simple expression in 
the case of
$K_2(2,A)$. The symbols $c(u,v)$ are easily and naturally described 
in terms of the generators of the Steinberg group, but the corresponding natural 
homology classes, even
in the case of a field, have no known simple construction. Since $K_2(2,\Z)$ is infinite cyclic 
with generator $c(-1,-1)$ while $\hoz{2}{\spl{2}{\Z}}=0$ it follows 
that there can be no simple universal expression defined over the ring $\Z$. The homology 
classes, $C(u,v)$, that we construct in section \ref{sec:classes} below are not very elegant 
(though it seems unlikely that they can be greatly improved on). To begin with, 
the construction of the representing cycles 
requires the presence of a unit $\lambda$ such that $\lambda^2-1$ 
is also a unit, although the resulting homology classes can be shown quite generally to be independent 
of the choice of $\lambda$. Furthermore, the representing cycles consist usually of 
$32$ terms and hence are far from simple. 

However, the cycles we construct are explicit and functorial for  homomorphisms of rings. We prove 
(Theorem \ref{thm:symb}) that they map to the symbols $c(u,v)\in K_2(2,A)$ when $A$ is a field.
We can thus use them  to write down provably non-trivial homology classes in 
$\hoz{2}{\spl{2}{A}}$ for more general rings $A$. 
In particular, in section \ref{sec:h2}, we use them to write down explicit elements 
of the groups $\hoz{2}{\spl{2}{\ntr{S}}}$ with given order and to construct generators 
of the groups $\hoz{2}{\spl{2}{\Z[1/m]}}$ when $m$ is divisible by $6$.

\section{Preliminaries and notation}
\subsection{Notation}
For a Dedekind Domain $A$ with field of fractions $F$, $\cl{A}$ denotes the ideal 
classgroup of $A$. If $\mathfrak{p}$ 
is a nonzero prime ideal of $A$, 
$v_{\mathfrak{p}}:F^\times\to\Z$ denotes the corresponding discrete value. 
For a global field $F$ and a nonempty set of primes $S$ of $F$ we let $\ntr{S}$ 
denote the ring of $S$-integers:
\[
\ntr{S}:=\{ a\in F^\times\ |\ v_{\mathfrak{p}}(a)\geq 0\mbox{ for all }\mathfrak{p}\not\in S\}.
\]

For a finite abelian group $M$, $\psyl{M}{p}$ denotes the Sylow $p$-subgroup of $M$. 

For a commutative ring $A$, we let $\sgr{A}:= \Z[A^\times/(A^\times)^2]$ be the group ring of the group of 
square classes of units.
 For $a\in A^\times$, the square class of $a$ will be denoted $\an{a}\in \sgr{A}$. Furthermore, the 
element $\an{a}-1$ in the augmentation ideal, $\aug{A}\subset \sgr{A}$, will be denoted $\pf{a}$. 

\subsection{Elementary matrices}
We will have occasion to refer to  the following facts:

For a commutative ring $A$, and any $x\in A$ we define the elementary matrices 
\[
E_{12}(x):=\matr{1}{x}{0}{1}, E_{21}(x):=\matr{1}{0}{x}{1}\in \spl{2}{A}.
\]
Let $E_2(A)$ be the subgroup of $\spl{2}{A}$ generated 
by  $E_{12}(x)$, $E_{21}(y)$, $x,y\in A$.

The following theorem of Vaserstein and Liehl will be essential below. Its proof relies on the 
resolution of the congruence subgroup problem for $\mathrm{SL}_2$ (see Serre \cite{serre:sl2}).

\begin{thm}[Vaserstein \cite{vaserstein:sl2}, Liehl \cite{liehl}]\label{thm:vl}
Let $K$ be a global field and let $S$ be a set of places of $K$ of cardinality at least 
$2$ and containing 
all archimedean places. Let 
\[
\mathcal{O}_S:= \{ x\in K\ |\ v(x)\geq 0 \mbox{ for all }v\not\in S\} 
\]
be the ring of $S$-integers of $K$. Let $I_1$ and $I_2$ be nonzero ideals of $\mathcal{O}_S$. 
Let
\[
\tilde{\Gamma}(I_1,I_2):= \left\{ \matr{a}{b}{c}{d}\in \spl{2}{\mathcal{O}_S}\ | \ 
b\in I_1,c\in I_2,  a-1,b-1\in I_1I_2\right\}
\]
Then $\tilde{\Gamma}(I_1,I_2)$ is generated by the  elementary matrices
\[
E_{12}(x), x\in I_1\mbox{ and } E_{21}(y), y\in I_2.
\] 

\end{thm}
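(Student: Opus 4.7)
The plan is to prove the two inclusions separately. Let $E(I_1,I_2)\leq \spl{2}{\ntr{S}}$ denote the subgroup generated by the stated elementary matrices. The containment $E(I_1,I_2)\subseteq \tpcong{I_1}{I_2}$ is a direct verification: for the generator $E_{12}(x)=\matr{1}{x}{0}{1}$ with $x\in I_1$ we have $b=x\in I_1$, $c=0\in I_2$, and $a-1=d-1=0\in I_1I_2$; analogously for $E_{21}(y)$, $y\in I_2$; and the congruence conditions defining $\tpcong{I_1}{I_2}$ are preserved under products.

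For the reverse inclusion I would follow the Vaserstein--Liehl strategy in two main stages. First I would establish that $E(I_1,I_2)$ is normal in $\spl{2}{\ntr{S}}$. The key identity is
\[
\matr{u}{0}{0}{u^{-1}}\, E_{12}(x)\, \matr{u^{-1}}{0}{0}{u}=E_{12}(u^2x),\qquad u\in \units{\ntr{S}},\ x\in I_1,
\]
together with its analogue for $E_{21}$. Combined with the hypothesis $\card{S}\geq 2$---under which Dirichlet's unit theorem makes $\units{\ntr{S}}$ infinite---and with standard commutator identities among elementary matrices, this produces enough conjugates to reduce control of arbitrary conjugates to conjugates by elementary and diagonal matrices, hence yields normality.

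The second and harder stage is a reduction argument. Given $g=\matr{a}{b}{c}{d}\in\tpcong{I_1}{I_2}$, the goal is to bring $g$ to the identity by a sequence of multiplications by elements of $E(I_1,I_2)$. The main move is right multiplication by $E_{21}(y)$, $y\in I_2$, which replaces the first column $(a,c)$ by $(a+by,\, c+dy)$; iterating this with the analogous move for $I_1$, one performs a Euclidean-type reduction aiming for $c=0$. Since $\ntr{S}$ is not in general Euclidean, the decisive ingredient is strong approximation for $\spl{2}$ over the global field $F$, which permits the $y\in I_2$ to be chosen with arbitrarily prescribed residues at any finite set of places outside $S$; the assumption that $S$ contains all archimedean places together with at least one further place is exactly what is needed here, and the argument ultimately rests on Serre's solution of the congruence subgroup problem for $\mathrm{SL}_2$ (\cite{serre:sl2}). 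Once $g$ has been reduced to upper triangular form $\matr{a}{b}{0}{a^{-1}}$ with $a\equiv 1\pmod{I_1I_2}$ and $b\in I_1$, one concludes by a Steinberg-style identity expressing $\matr{a}{0}{0}{a^{-1}}$ as a product of elementary matrices whose entries the congruence $a-1\in I_1I_2$ forces to lie in the prescribed ideals.

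The hard part is the column-reduction step. The non-Euclidean nature of $\ntr{S}$, together with the requirement that \emph{every} elementary matrix used belongs to the restricted family $E_{12}(x)$, $x\in I_1$ and $E_{21}(y)$, $y\in I_2$---rather than having entries merely in $\ntr{S}$---is precisely what makes strong approximation and the congruence subgroup property indispensable, and is where the $\card{S}\geq 2$ hypothesis is used in a genuinely non-trivial way.
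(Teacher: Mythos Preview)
The paper does not prove this theorem; it is quoted as a black box from the cited works of Vaserstein and Liehl, with only the remark that the proof rests on Serre's resolution of the congruence subgroup problem for $\mathrm{SL}_2$. There is therefore no proof in the paper to compare your attempt against.

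That said, your outline contains a genuine error and a genuine gap. The error: you assert that $E(I_1,I_2)$ is normal in $\spl{2}{\ntr{S}}$, but when $I_1\neq I_2$ this is false. Conjugation by $w=\matr{0}{1}{-1}{0}$ sends $E_{12}(x)$ to $E_{21}(-x)$, so it carries $E(I_1,I_2)$ to $E(I_2,I_1)$, and there is no reason these coincide. The diagonal-conjugation identity you wrote down shows only that $E(I_1,I_2)$ is normalised by the diagonal torus, and ``enough conjugates'' is not an argument here. The gap: the column-reduction step you describe is precisely the step that fails for $\spl{2}$. Strong approximation lets you prescribe residues of $y$ at finitely many places, but it does not let you force $c+dy$ to be zero, or even to decrease in any well-founded sense, when $\ntr{S}$ is not Euclidean; there is no stable-range shortcut in rank one. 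The actual Vaserstein--Liehl argument does not proceed by such a reduction. It uses the congruence subgroup property (Serre) to show that $E(I_1,I_2)$ contains a full principal congruence subgroup $\pcong{\ntr{S}}{J}$, after which one is left with a finite-index problem. Your invocation of \cite{serre:sl2} is in the right place, but the mechanism by which it enters is structural rather than the step-by-step Euclidean reduction you sketch.
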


\begin{prop} \label{prop:e2} 
Let $A$ be a commutative ring.
\begin{enumerate}
\item $E_2(A)=\spl{2}{A}$ if $A$ is a field or a Euclidean domain or if $A=\mathcal{O}_S$ is the 
ring of $S$-integers in a global field and $|S|\geq 2$. 
\item $E_2(A)$ is perfect if there exists $\lambda_1,\ldots,\lambda_n\in A^\times$ and $b_1,\ldots, 
b_n\in A$ such that\\
 $\sum_{i=1}^nb_i(\lambda_i^2-1)=1$ in $A$. 

In particular, $E_2(A)$ is perfect if there exists $\lambda\in A^\times$ such that $\lambda^2-1\in 
A^\times$ also.
\end{enumerate}
\end{prop}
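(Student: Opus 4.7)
\textbf{Plan for Proposition \ref{prop:e2}.}

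For part (1), the field and Euclidean cases are standard: any matrix in $\spl{2}{A}$ can be reduced to the identity by elementary row operations on the left, which amounts to writing it as a product of $E_{12}(x)$'s and $E_{21}(y)$'s. For the $S$-integer case, I would simply invoke Theorem \ref{thm:vl} with $I_1=I_2=\ntr{S}$: the defining conditions on $\tilde{\Gamma}(\ntr{S},\ntr{S})$ become vacuous, so this group is all of $\spl{2}{\ntr{S}}$, and Vaserstein--Liehl asserts it is generated by the elementary matrices $E_{12}(x),E_{21}(y)$ with $x,y\in\ntr{S}$, which is exactly $E_2(\ntr{S})$.

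For part (2), the plan is to exhibit every elementary matrix as a product of commutators of elements of $E_2(A)$. The key tool is the presence of diagonal matrices in $E_2(A)$: for $u\in\units{A}$, set
\[
w(u):= E_{12}(u)E_{21}(-u^{-1})E_{12}(u)\in E_2(A), \qquad h(u):= w(u)w(1)^{-1}\in E_2(A).
\]
A direct computation shows $h(u)=\matr{u}{0}{0}{u^{-1}}$, and hence
\[
[h(u),E_{12}(x)] \;=\; E_{12}((u^2-1)x), \qquad [h(u),E_{21}(y)] \;=\; E_{21}((u^{-2}-1)y),
\]
so every $E_{12}((u^2-1)x)$ (and similarly every $E_{21}((u^2-1)y)$) lies in the commutator subgroup $[E_2(A),E_2(A)]$.

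Now given a relation $\sum_{i=1}^n b_i(\lambda_i^2-1)=1$ with $\lambda_i\in\units{A}$, $b_i\in A$, I would write, for arbitrary $x\in A$,
\[
E_{12}(x) \;=\; \prod_{i=1}^n E_{12}\bigl(b_ix(\lambda_i^2-1)\bigr),
\]
each factor being a commutator by the identity above; the same argument applies to $E_{21}(y)$. Since the elementary matrices generate $E_2(A)$, it follows that $E_2(A)=[E_2(A),E_2(A)]$. The ``In particular'' clause is then the special case $n=1$, $\lambda_1=\lambda$, $b_1=(\lambda^2-1)^{-1}$. I do not anticipate a real obstacle here; the only mildly subtle point is verifying that $h(u)\in E_2(A)$ (not merely in $\spl{2}{A}$), which is immediate from the formula $h(u)=w(u)w(1)^{-1}$.
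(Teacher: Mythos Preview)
Your proposal is correct and essentially identical to the paper's proof: part (1) is handled the same way (standard linear algebra plus Vaserstein--Liehl with $I_1=I_2=\ntr{S}$), and part (2) proceeds by the same diagonal-matrix commutator trick, the paper writing $D(\lambda)=w(\lambda)w(-1)$ where your $h(u)=w(u)w(1)^{-1}$ (note $w(-1)=w(1)^{-1}$). The only cosmetic difference is that you are slightly more explicit about the $E_{21}$ case, which the paper leaves implicit.
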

\begin{proof}
\begin{enumerate}
\item This is standard linear algebra in the case of a Euclidean Domain or a field, 
and the theorem of Vaserstein-Liehl in the case of $S$-integers.
\item For $\lambda\in A^\times$, let 
\[
D(\lambda):=\matr{\lambda}{0}{0}{\lambda^{-1}}\in\spl{2}{A}.
\]

Note that $D(\lambda)\in E_2(A)$ since 
\[
D(\lambda)=w(\lambda)w(-1)\mbox{ where } w(\lambda):=
\matr{0}{\lambda}{-\lambda^{-1}}{0}=E_{12}(\lambda)E_{21}(-\lambda^{-1})E_{12}(\lambda).
\]
Then 
\[
D(\lambda)E_{12}(x)D(\lambda)^{-1}= E_{12}(\lambda^2x)
\]
and hence, for any $b\in A$ we have 
\[
[D(\lambda),E_{12}(bx)]=D(\lambda)E_{12}(bx)D(\lambda)^{-1}E_{12}(-bx)=E_{12}((\lambda^2-1)bx).
\]

Thus 
\[
E_{12}(x)=E_{12}(\sum_i(\lambda_i^2-1)b_ix)=\prod_iE_{12}((\lambda_i^2-1)b_ix)=
\prod_i[D(\lambda_i),E_{12}(b_ix)].
\]
\end{enumerate}
\end{proof}
\begin{rem}On the other hand, the groups $E_2(\F{2})=\spl{2}{\F{2}}$ and $E_2(\F{3})=\spl{2}{\F{3}}$ 
are not perfect. It follows that if the ring $A$ admits a homomorphism to $\F{2}$ or $\F{3}$ then 
$E_2(A)$ is not perfect. In particular, the group $E_2(\Z)$ is not perfect.  
\end{rem}
\begin{rem}
In \cite{swan:special}, R. Swan showed that $E_2(A)\not=\spl{2}{A}$ for  
$A=\Z[\sqrt{-5}]$.

Indeed, when $A$ is the ring of integers in a quadratic imaginary number field then 
$E_2(A)\not=\spl{2}{A}$ except in the five cases that $A$ is a Euclidean Domain (see 
\cite{vaserstein:sl2}). 
\end{rem}

\subsection{Homology of Groups}
For any group $G$, $F_\bullet(G)$ will denote the (right) bar 
resolution of $\Z$ over $\Z[G]$: i.e. 
for $n\geq 1$, $F_n(G)$ is the free right $\Z[G]$-module with generators $[g_n|\cdots |g_1]$, $\g_i\in G$, 
and $F_0(G)=\Z[G]$ (regarded as a right $\Z[G]$-module). The boundary homomorphism 
$d_n:F_n(G)\to F_{n-1}(G)$ is given by 
\[
d_n([g_n|\cdots |g_1])=[g_n|\cdots |g_2]g_1+\sum_{i=1}^{n-1}(-1)^{n-i}[g_{n-1}|\cdots |g_{i+1}g_{i}|\cdots |g_1]
+(-1)^n[g_{n-1}|\cdots|g_1]
\]
for $n\geq 2$ and $d_1([g]):= g-1$.

We let $\bar{F}_\bullet(G)$ denote the complex $\{\bar{F}_n(G)\}_{n\geq 0}$ where 
\[
\bar{F}_n(G):= F_n(G)\otimes_{\Z[G]}\Z.
\]  
Thus $\hoz{n}{G}\cong H_n(\bar{F}_\bullet(G))$.

We will require the following standard ``centre kills'' argument from group homology:

\begin{lem}\label{lem:ck}
Let $G$ be a group and let $M$ be a $\Z[G]$-module. Suppose that $g\in Z(G)$ has the property that 
$g-1$ acts as an automorphism on $M$. Then $\ho{i}{G}{M}=0$ for all $i\geq 0$. 
\end{lem}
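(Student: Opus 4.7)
The plan is to combine two standard facts about the action of a central element on group homology. The first is that for any element $h\in G$, the pair of maps consisting of conjugation by $h$ on $G$ and left multiplication by $h$ on $M$ induces the identity endomorphism of $\ho{i}{G}{M}$; this is proved, for instance, by constructing an explicit chain homotopy on the bar resolution between the induced chain map and the identity.

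First I would specialise to $h = g\in Z(G)$. Then conjugation by $g$ is the identity automorphism of $G$, so the cited fact shows that the endomorphism of $\ho{i}{G}{M}$ induced by the $\Z[G]$-module map $m \mapsto gm$ on $M$ (which is $\Z[G]$-linear precisely because $g$ is central) is the identity. Subtracting the identity, it follows that the endomorphism of $\ho{i}{G}{M}$ induced by $g-1 \colon M \to M$ is the zero map.

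Next I would exploit the hypothesis that $g-1$ is an automorphism of $M$. Since $g$ is central, $g-1$ is a $\Z[G]$-module endomorphism, and being bijective, it is a $\Z[G]$-module automorphism of $M$. Functoriality of $\ho{i}{G}{-}$ in the coefficients then forces the induced map on $\ho{i}{G}{M}$ to be an automorphism.

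Combining these two conclusions, the induced endomorphism of $\ho{i}{G}{M}$ is simultaneously the zero map and an automorphism, which forces $\ho{i}{G}{M} = 0$ for all $i\geq 0$. There is no substantive obstacle here; the only point requiring a little care is the initial chain-homotopy fact, which is standard and can simply be quoted from any reference on group (co)homology.
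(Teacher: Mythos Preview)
Your argument is correct and is exactly the standard ``centre kills'' proof. The paper does not actually prove this lemma; it merely states it as a well-known fact (calling it the ``standard `centre kills' argument'' when it is invoked later), so there is nothing further to compare.
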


\section{The functor $K_2(2,A)$}\label{sec:k2}
In this section, we review some of the theory of the functor $K_2(2,A)$ 
for commutative rings $A$. 

\subsection{Definitions}
Let $A$ be a commutative ring.  

We let $A^\times$ act by automorphisms 
on $\spl{2}{A}$ as follows: Let 
\[
M(a):=\matr{a}{0}{0}{1}\in \gl{2}{A}.
\] 
and define
\[
a\ast X:= X^{M(a)}=M(a)^{-1}XM(a)
\]
for $a\in A^\times$, $X\in \spl{2}{A}$.

In particular, we have 
\[
a\ast E_{12}(x)= E_{12}(a^{-1}x)\mbox{ and } a\ast E_{21}(x)=E_{21}(ax)
\]
for all $a\in A^\times$, $x\in A$.

The rank one Steinberg group $\st{2}{A}$ is defined by generators and relations as follows:
The generators are the terms 
\[
x_{12}(t)\mbox{ and } x_{21}(t), \quad t\in A
\]
and the defining relations are 
\begin{enumerate}
\item 
\[
x_{ij}(s)x_{ij}(t)=x_{ij}(s+t)
\]
for $i\not=j\in \{ 1,2\}$ and all $s,t\in A$, and 
\item For $u\in A^\times$, let 
\[
w_{ij}(u):= x_{ij}(u)x_{ji}(-u^{-1})x_{ij}(u)
\]
for $i\not=j\in \{ 1,2\}$. Then 
\[
w_{ij}(u)x_{ij}(t)w_{ij}(-u)=x_{ji}(-u^{-2}t)
\]
for all $u\in A^\times$, $t\in A$.
\end{enumerate}

There is a natural surjective homomorphism $\phi:\st{2}{A}\to E_2(A)$ defined by 
$\phi(x_{ij}(t))=E_{ij}(t)$ for all $t$.  It is easily verified that the formulae 
\[
a\ast x_{12}(t)=x_{12}(a^{-1}t)\mbox{ and }a\ast x_{21}(t)=x_{21}(at)
\]
define an action of $A^\times$ on $\st{2}{A}$ by automorphisms. Clearly the homomorphism 
$\phi$ is equivariant with respect to this action.

By definition $K_2(2,A)$ is the kernel of $\phi$. It inherits an action of $A^\times$. 

For $u\in A^\times$ and for $i\not=j\in \{ 1,2\}$, we let 
\[
h_{ij}(u):= w_{ij}(u)w_{ij}(-1).
\]
Note that 
\[
\phi(w_{12}(u))=\matr{0}{u}{-u^{-1}}{0} \mbox{ and } \phi(h_{12}(u))=\matr{u}{0}{0}{u^{-1}}.
\]

Note that, from the definitions and defining relation (1), 
for any $a\in A$ and for any unit $u$ we have 
\[
x_{ij}(a)^{-1}=x_{ij}(-a)\mbox{ and } w_{ij}(u)^{-1}=w_{ij}(-u). 
\]

The defining relation  (2) above thus immediately gives the following 
conjugation formula. 
 
\begin{lem}\label{lem:conjw}
Let $A$ be a commutative ring. Let $a\in A$ and $u\in A^\times$. For $i\not=j\in 
\{ 1,2\}$ 
\[
x_{ij}(a)^{w_{ij}(-u)}=x_{ji}(-u^{-2}a).
\]
\end{lem}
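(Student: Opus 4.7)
The plan is essentially a direct unwinding of the conjugation convention combined with the second defining relation of $\st{2}{A}$, so there is no real obstacle here; the proof amounts to checking that no sign gets lost.

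First I would recall the conjugation convention set up earlier in the paper, namely $X^Y := Y^{-1}XY$ (as used in the definition of $a\ast X = X^{M(a)}$). Thus the left-hand side to be proved equals
\[
x_{ij}(a)^{w_{ij}(-u)} = w_{ij}(-u)^{-1}\, x_{ij}(a)\, w_{ij}(-u).
\]

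Next, I would invoke the inversion identity noted immediately before the lemma statement, $w_{ij}(u)^{-1} = w_{ij}(-u)$, which (substituting $u \mapsto -u$) gives $w_{ij}(-u)^{-1} = w_{ij}(u)$. Plugging this in converts the expression into
\[
w_{ij}(u)\, x_{ij}(a)\, w_{ij}(-u),
\]
which is literally the left-hand side of defining relation (2) of $\st{2}{A}$ with the parameter $t$ specialized to $a$. That relation yields $x_{ji}(-u^{-2}a)$, as required.

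The only step to double-check is that the inversion formula $w_{ij}(u)^{-1} = w_{ij}(-u)$ genuinely holds in $\st{2}{A}$ and not merely in $\spl{2}{A}$; but this is immediate from the definition $w_{ij}(u) = x_{ij}(u)x_{ji}(-u^{-1})x_{ij}(u)$ together with the additivity relation (1), which gives $x_{ij}(s)^{-1} = x_{ij}(-s)$, so reversing the product of three generators yields $w_{ij}(u)^{-1} = x_{ij}(-u)x_{ji}(u^{-1})x_{ij}(-u) = w_{ij}(-u)$. With this verified, the lemma follows in one line.
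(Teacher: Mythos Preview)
Your proof is correct and follows exactly the same route as the paper: the paper simply states that the defining relation (2) immediately gives the conjugation formula, and your proposal is a careful unwinding of precisely that observation (together with the inversion identity $w_{ij}(u)^{-1}=w_{ij}(-u)$, which the paper also records just before the lemma).
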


Since the right-hand-side is unchanged by $u\rightarrow -u$, we deduce:
\begin{cor}\label{cor:conjw}
Let $A$ be a commutative ring. Let $a\in A$ and $u\in A^\times$. For $i\not=j\in 
\{ 1,2\}$ 
\[
x_{ij}(a)^{w_{ij}(u)^{-1}}=x_{ji}(-u^{-2}a)=x_{ij}(a)^{w_{ij}(u)}.
\]
and 
\[
x_{ji}(a)^{w_{ij}(u)}=x_{ij}(-u^2a).
\]
\end{cor}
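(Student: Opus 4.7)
The plan is to derive all three identities as formal consequences of Lemma \ref{lem:conjw} combined with the relation $w_{ij}(u)^{-1}=w_{ij}(-u)$ recorded just above the lemma. No new computations in the Steinberg group are required; everything is symbol manipulation.

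First I would establish the middle equality $x_{ij}(a)^{w_{ij}(u)}=x_{ji}(-u^{-2}a)$. Substituting $u\mapsto -u$ in the conclusion of Lemma \ref{lem:conjw} and using $(-u)^{-2}=u^{-2}$ gives this immediately. This is the ``symmetry under $u\to -u$'' remark that precedes the corollary.

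Next, for the first equality $x_{ij}(a)^{w_{ij}(u)^{-1}}=x_{ji}(-u^{-2}a)$, I would rewrite $w_{ij}(u)^{-1}=w_{ij}(-u)$ and apply Lemma \ref{lem:conjw} directly with that value of $u$. Comparing with the previous step gives the joint equality claimed in the first displayed line of the corollary.

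Finally, for $x_{ji}(a)^{w_{ij}(u)}=x_{ij}(-u^2a)$, I would run the first identity in reverse: set $b=-u^{2}a$ in $x_{ij}(b)^{w_{ij}(u)^{-1}}=x_{ji}(-u^{-2}b)$ to obtain $x_{ij}(-u^{2}a)^{w_{ij}(u)^{-1}}=x_{ji}(a)$, and then conjugate both sides by $w_{ij}(u)$. There is no real obstacle to speak of; the only thing to be careful about is not to confuse $w_{ij}(u)$ with $w_{ji}(u)$, since Lemma \ref{lem:conjw} is applied only with the original pair of indices $(i,j)$, never with them swapped.
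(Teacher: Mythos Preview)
Your proposal is correct and follows essentially the same approach as the paper: the paper's entire proof is the one-line remark that the right-hand side of Lemma~\ref{lem:conjw} is unchanged under $u\mapsto -u$, combined with $w_{ij}(u)^{-1}=w_{ij}(-u)$. Your derivation of the second displayed identity by substituting $b=-u^{2}a$ and conjugating back is a clean way to spell out what the paper leaves implicit.
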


Form the definition of $h_{ij}(u)$, we then obtain:
\begin{cor}\label{cor:conjh}
Let $A$ be a commutative ring. Let $a\in A$ and $u\in A^\times$. For $i\not=j\in 
\{ 1,2\}$ 
\[
x_{ij}(a)^{h_{ij}(u)}=x_{ij}(u^{-2}a)\mbox{ and }
x_{ij}(a)^{h_{ij}(u)^{-1}}=x_{ij}(u^2a).
\]
\end{cor}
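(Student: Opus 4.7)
The plan is to unwind the definition $h_{ij}(u)=w_{ij}(u)w_{ij}(-1)$ and apply Corollary \ref{cor:conjw} twice, step by step.

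First I would compute $x_{ij}(a)^{h_{ij}(u)}$ by conjugating in two stages. Since conjugation is a group action, we have
\[
x_{ij}(a)^{h_{ij}(u)}=\bigl(x_{ij}(a)^{w_{ij}(u)}\bigr)^{w_{ij}(-1)}.
\]
The first formula of Corollary \ref{cor:conjw} gives $x_{ij}(a)^{w_{ij}(u)}=x_{ji}(-u^{-2}a)$. Now the second formula of Corollary \ref{cor:conjw}, applied with $u$ replaced by $-1$ (noting that $(-1)^2=1$), yields $x_{ji}(b)^{w_{ij}(-1)}=x_{ij}(-b)$ for any $b\in A$. Applying this with $b=-u^{-2}a$ gives $x_{ij}(u^{-2}a)$, establishing the first identity.

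For the second identity I would simply invert. The first identity says conjugation by $h_{ij}(u)$ sends $x_{ij}(a)\mapsto x_{ij}(u^{-2}a)$; conjugation by $h_{ij}(u)^{-1}$ is the inverse of this automorphism, so it sends $x_{ij}(u^{-2}a)\mapsto x_{ij}(a)$. Substituting $a'=u^{-2}a$ (equivalently $a=u^2 a'$) gives $x_{ij}(a')^{h_{ij}(u)^{-1}}=x_{ij}(u^2 a')$, as required.

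There is no substantive obstacle here: the argument is a mechanical two-line conjugation computation built entirely from Corollary \ref{cor:conjw}. The only point requiring a little care is getting the signs right when $u$ is specialized to $-1$ in the second conjugation, which is why I would handle that step explicitly rather than absorb it into a single display.
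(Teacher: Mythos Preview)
Your proof is correct and follows exactly the approach the paper intends: the paper states only ``From the definition of $h_{ij}(u)$, we then obtain'' and leaves the two-step conjugation implicit, while you have written it out carefully. There is nothing to add.
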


\subsection{Symbols}
In particular, for $u,v\in A^\times$ the \emph{symbols} 
\[
c(u,v):= h_{12}(u)h_{12}(v)h_{12}(uv)^{-1}
\]
lie in $K_2(2,A)$. 

The elements $c(u,v)$ are central in $\st{2}{A}$. We let $C(2,A)$ denote the 
subgroup of $K_2(2,A)$ generated by these symbols. 

Note that for $a,u\in A^\times$ we have 
\[
a\ast w_{12}(u)=w_{12}(a^{-1}u)\mbox{ and } a\ast w_{21}(u)=w_{21}(au)
\]
and hence 
\[
a\ast h_{12}(u)=h_{12}(a^{-1}u)h_{12}(a^{-1})^{-1}\mbox{ and }
a\ast h_{21}(u)=h_{21}(au)h_{21}(a)^{-1}. 
\]

It follows easily that 
\[
a\ast c(u,v) =c(u,a^{-1})^{-1}c(u,a^{-1}v).
\]

Thus the abelian group $C(2,A)$ is a module over the group ring 
$\Z[A^\times]$ with this action. 

\begin{lem}\label{lem:symb} Let $A$ be a commutative ring. Then 
\[
a^2\ast c(u,v)=c(u,v)
\]  
for all $a,u,v\in A^\times$. 

In particular, $C(2,A)$ is naturally an $\sgr{A}$-module.
\end{lem}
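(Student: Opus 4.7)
The plan is to prove this by showing that, for any $a\in A^\times$, the automorphism $a^2\ast$ of $\st{2}{A}$ coincides with conjugation by the element $h_{12}(a)\in\st{2}{A}$. Since the paper states that the symbols $c(u,v)$ are central in $\st{2}{A}$, conjugation by $h_{12}(a)$ fixes them, yielding the desired identity.

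First I would verify both sides on the generators $x_{12}(t)$ and $x_{21}(t)$ of $\st{2}{A}$ (two automorphisms of $\st{2}{A}$ agreeing on generators must agree everywhere). From the defining formulas of the $\ast$-action, $a^2\ast x_{12}(t)=x_{12}(a^{-2}t)$ and $a^2\ast x_{21}(t)=x_{21}(a^2t)$. On the conjugation side, the identity $x_{12}(t)^{h_{12}(a)}=x_{12}(a^{-2}t)$ is exactly Corollary \ref{cor:conjh}. For $x_{21}(t)$, I write $h_{12}(a)=w_{12}(a)w_{12}(-1)$ and apply Corollary \ref{cor:conjw} twice:
\[
x_{21}(t)^{h_{12}(a)}=\bigl(x_{21}(t)^{w_{12}(a)}\bigr)^{w_{12}(-1)}=x_{12}(-a^2t)^{w_{12}(-1)}=x_{21}(a^2t),
\]
using first $x_{ji}(b)^{w_{ij}(u)}=x_{ij}(-u^2b)$ and then $x_{ij}(b)^{w_{ij}(u)}=x_{ji}(-u^{-2}b)$. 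Both sides thus agree on all generators, so
\[
a^2\ast X \;=\; h_{12}(a)\,X\,h_{12}(a)^{-1}\qquad \forall X\in\st{2}{A}.
\]

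Applying this with $X=c(u,v)$ and using that $c(u,v)$ is central in $\st{2}{A}$ gives $a^2\ast c(u,v)=c(u,v)$, proving the first claim. For the ``in particular'' statement, the $A^\times$-action on the subgroup $C(2,A)$ now factors through $A^\times/(A^\times)^2$, which is exactly the statement that $C(2,A)$ is a module over $\sgr{A}=\Z[A^\times/(A^\times)^2]$.

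I do not expect any real obstacle: the heart of the argument is the observation that $u^2\ast$ is inner (realised by conjugation by $h_{12}(u)$), and this is a straightforward check on the two families of generators using the conjugation formulae already established in Corollaries \ref{cor:conjw} and \ref{cor:conjh}. The only slightly delicate point worth flagging is that this identification is really on the nose — $u\mapsto \mathrm{Conj}(h_{12}(u))$ is not itself a group homomorphism $A^\times\to\aut{\st{2}{A}}$ since $h_{12}$ is not multiplicative — so the argument works by comparing individual automorphisms indexed by $a$, not by comparing actions of groups.
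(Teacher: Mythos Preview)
Your proof is correct but takes a genuinely different route from the paper. You show that the automorphism $a^2\ast$ of $\st{2}{A}$ is \emph{inner}—namely conjugation by $h_{12}(a)$—by checking on the generators $x_{12}(t)$, $x_{21}(t)$ via Corollaries~\ref{cor:conjw} and~\ref{cor:conjh}; centrality of $c(u,v)$ then finishes the argument immediately. The paper instead computes $a\ast c(u,v)$ directly in two different ways: it already has $a\ast c(u,v)=c(u,a^{-1})^{-1}c(u,a^{-1}v)$ from the action on the $h_{12}$'s, and in the proof it rewrites $c(u,v)=h_{21}(u)^{-1}h_{21}(v)^{-1}h_{21}(uv)$ (using $h_{ij}(u)=h_{ji}(u)^{-1}$) and recomputes via the action on the $h_{21}$'s to get $a\ast c(u,v)=c(u,a)^{-1}c(u,av)$. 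Comparing the two expressions yields $a\ast c(u,v)=a^{-1}\ast c(u,v)$. Your argument is shorter and more conceptual, and in fact proves the stronger statement that $a^2\ast$ fixes the entire centre of $\st{2}{A}$, hence all of $K_2(2,A)$ whenever it is central, not merely the subgroup $C(2,A)$ generated by symbols. The paper's approach, by contrast, stays entirely within symbol manipulations and brings out the $h_{12}/h_{21}$ symmetry explicitly.
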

\begin{proof}
We have $h_{ij}(u)=h_{ji}(u)^{-1}$ in $\st{2}{A}$. Thus 
\[
c(u,v)= h_{12}(u)h_{12}(v)h_{12}(uv)^{-1}=h_{21}(u)^{-1}h_{21}(v)^{-1}h_{21}(uv).
\]

Thus 
\begin{eqnarray*}
a*c(u,v)&=& h_{21}(a)h_{21}(au)^{-1}h_{21}(a)h_{21}(av)^{-1}h_{21}(auv)h_{21}(a)^{-1}\\
&=&  h_{21}(a)h_{21}(au)^{-1}h_{21}(a)\cdot\left( h_{21}(u)h_{21}(u)^{-1}\right)\cdot 
h_{21}(av)^{-1}h_{21}(auv)h_{21}(a)^{-1}\\
&=& h_{21}(a)c(u,a)^{-1}c(u,av)h_{21}(a)^{-1}\\
&=& c(u,a)^{-1}c(u,av)\\
&=& a^{-1}\ast c(u,v).\\
\end{eqnarray*}
\end{proof}
The symbols $c(u,v)$ satisfy the following properties (see \cite{mat:pres}, or also 
\cite{steinberg:chev})
\begin{prop}\label{prop:mat}
Let $A$ be a commutative ring. Then
\begin{enumerate}
\item $c(u,v)=1$ if $u=1$ or $v=1$.
\item $c(u,v)=c(v^{-1},u)$ for all $u,v\in A^\times$.
\item $c(u,vw)c(v,w)=c(uv,w)c(u,v)$ for all $u,v,w\in A^\times$.
\item $c(u,v)=c(u,-uv)$ for all $u,v\in A^\times$
\item $c(u,v)=c(u,(1-u)v)$ whenever $u,1-u,v\in A^\times$. 
\end{enumerate}
\end{prop}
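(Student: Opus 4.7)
The strategy is to derive each identity by direct calculation in $\st{2}{A}$, using only the defining relations, the conjugation formulas in Corollaries \ref{cor:conjw} and \ref{cor:conjh}, the identity $h_{12}(u) = h_{21}(u)^{-1}$ noted in the proof of Lemma \ref{lem:symb}, and the centrality of each $c(u,v)$ in $\st{2}{A}$. I would treat the five parts in the order (1), (3), (2), (4), (5), since later parts depend on earlier ones.

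Part (1) reduces to showing $h_{12}(1) = 1$: expanding $w_{12}(1)\,w_{12}(-1)$ via $w_{12}(t) = x_{12}(t)\,x_{21}(-t^{-1})\,x_{12}(t)$ produces six factors $x_{ij}(\pm 1)$, and adjacent like-indexed pairs cancel via relation (1) of the Steinberg group, collapsing the product to the identity. Then $c(1,v) = h_{12}(v)\,h_{12}(v)^{-1} = 1$ and $c(u,1) = h_{12}(u)\cdot 1\cdot h_{12}(u)^{-1} = 1$. For (3), expand $h_{12}(u)\,h_{12}(v)\,h_{12}(w)$ in two ways by associating left or right, using the defining relation $h_{12}(a)\,h_{12}(b) = c(a,b)\,h_{12}(ab)$ and sliding the emerging central symbol past the remaining $h$-factor; the two resulting expressions are both $\bigl(\text{central}\bigr)\cdot h_{12}(uvw)$, and equating them yields the cocycle identity.

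Part (2) follows from (1) and (3) by a short manipulation: from (3) with $(u,v,w) = (v, v^{-1}, v)$ one extracts $c(v, v^{-1}) = c(v^{-1}, v)$. Then rewrite $c(v^{-1}, u) = h_{12}(v^{-1})\,h_{12}(u)\,h_{12}(v^{-1}u)^{-1}$ by substituting $h_{12}(v^{-1}) = c(v, v^{-1})\,h_{12}(v)^{-1}$, slide the central symbol to the front, and apply (3) to reduce the remaining product to $c(u,v)$. Part (4) reduces to showing $c(u,-u) = 1$, since then (3) applied with $(u',v',w') = (u, -u, v)$ combined with $c(u,-u)=1$ gives $c(u,-uv) = c(-u^2, v)\,c(-u, v)^{-1}$, and a further application of (3) identifies this with $c(u,v)$. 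To establish $c(u, -u) = 1$, compute the product $w_{12}(u)\,w_{12}(-u^{-1})$ in $\st{2}{A}$ using Corollary \ref{cor:conjw}: this collapses to $h_{12}(u^2)$ up to the manifest central correction, and comparing with $h_{12}(u)\,h_{12}(-u)$ forces $c(u,-u) = 1$.

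The main obstacle is (5), the Steinberg relation $c(u, 1-u) = 1$ when $u$ and $1-u$ are both units. Unlike the other parts, this identity genuinely uses the additive relation $u + (1-u) = 1$ and cannot be derived formally from (1)--(4). The classical Matsumoto--Steinberg approach manipulates a carefully chosen product involving $x_{12}$ and $x_{21}$ with arguments $u$, $1-u$, and $(1-u)^{-1}$, and applies the conjugation formulas of Corollary \ref{cor:conjw} in two distinct ways to express the same element as $h_{12}(u)\,h_{12}(1-u)\,h_{12}(u(1-u))^{-1}$ times a product that collapses to $1$ by relation (1). The combinatorial core of the argument is short but ad hoc; identifying the right product and the right sequence of rewrites is where I expect the bulk of the work.
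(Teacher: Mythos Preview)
The paper does not actually prove this proposition; it simply cites Matsumoto \cite{mat:pres} and Steinberg \cite{steinberg:chev}. Your proposal therefore goes beyond what the paper does, and parts (1), (3), and your description of the Matsumoto--Steinberg argument for (5) are all reasonable.

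There is, however, a genuine gap in your treatment of (2). You claim it follows from (1) and (3) ``by a short manipulation,'' but your stated manipulation is circular: after substituting $h_{12}(v^{-1}) = c(v,v^{-1})\,h_{12}(v)^{-1}$ into $c(v^{-1},u)$, the remaining product $h_{12}(v)^{-1}h_{12}(u)h_{12}(v^{-1}u)^{-1}$ equals $c(v,v^{-1})^{-1}c(v^{-1},u)$ (just undo the substitution), so you recover only the tautology $c(v^{-1},u)=c(v^{-1},u)$. More fundamentally, (2) \emph{cannot} follow from (1) and (3) alone: any bilinear form $B\colon A^\times\times A^\times\to M$ satisfies (1) and (3), but (2) would force $B$ to be skew-symmetric. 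The missing ingredient is precisely the Steinberg-group relation $w_{21}(u)=w_{12}(-u^{-1})$ (equivalently $h_{12}(u)=h_{21}(u)^{-1}$), which you list among your tools but never deploy; this is what Matsumoto uses to break the formal symmetry of a generic cocycle.

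Your reduction in (4) has the same defect. Writing $c(u,-uv)=c(-u^2,v)\,c(-u,v)^{-1}$ via (3) is fine, but the ``further application of (3)'' you invoke to identify this with $c(u,v)$ is again circular: applying (3) with $(-u,u,v)$ yields $c(-u^2,v)=c(-u,uv)\,c(u,v)$ (using $c(-u,u)=1$), so you would need $c(-u,uv)=c(-u,v)$, which is (4) itself with $u\mapsto -u$. The clean route, indicated in the Remark following the proposition, is to use the $A^\times$-module structure of Lemma~\ref{lem:symb}: once $c(u,-u)=1$ is established, applying the automorphism $\an{v}$ to both sides and using the formula $\an{v}c(u,w)=c(u,v)^{-1}c(u,vw)$ gives (4) immediately.
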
 

\begin{rem}
Combining the result of Lemma \ref{lem:symb} with Proposition \ref{prop:mat} (3), we see 
that the square class $\an{a}\in \sgr{A}$ acts on $C(2,A)$ via
\[
\an{a}c(u,v)= c(u,a)^{-1}c(u,av)= c(au, v)c(a,v)^{-1}.
\]   

Futhermore Proposition \ref{prop:mat} (4) is equivalent to 
\[
\an{v}c(u,-u)=1\mbox{ for all }u,v\in A^\times
\]
and Proposition \ref{prop:mat} (5) is equivalent to
\[
\an{v}c(u,1-u)=1 \mbox{ for all }u,v\in A^\times.
\] 
\end{rem}

We will use the following property of symbols (\cite{mat:pres}):
\begin{lem}\label{lem:square}
 If $u,v,w$ are units in $A$, then 
\[
c(u,v^2w)=c(u,v^2)c(u,w)
\]
and
\[
c(u,v^2)=c(u,v)c(v,u)^{-1}=c(u^2,v).
\]
\end{lem}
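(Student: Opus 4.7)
The plan is to prove the three identities in the order (b) $c(u,v^2) = c(u,v)c(v,u)^{-1}$, then (c) $c(u,v^2) = c(u^2,v)$, then (a) $c(u,v^2w) = c(u,v^2)c(u,w)$, since each subsequent identity is derived from the previous ones together with Proposition~\ref{prop:mat}.

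For (b), I would apply the cocycle relation Proposition~\ref{prop:mat}(3) with the substitution $(u,v,w) \mapsto (u,v,v)$, yielding
\[
c(u, v^2) = c(uv, v) \, c(u,v) \, c(v,v)^{-1},
\]
so the task reduces to expressing $c(uv, v)$ and $c(v,v)$ in a compatible form. A second application of (3), with $(u,v,w) \mapsto (v,u,v)$, followed by (2) to flip $c(v, uv) = c((uv)^{-1}, v)$ and a third application of (3), expresses $c(uv, v)$ through $c(v^{-1}, v)$ and $c(u^{-1}, v^{-1})$. Two applications of (2) identify $c(u^{-1}, v^{-1}) = c(u,v)$, while (4) gives $c(v^{-1}, v) = c(v^{-1}, -1)$ and $c(v,v) = c(v,-1)$ (the latter via the auxiliary identity $c(v, v^2 x) = c(v, x)$, obtained by iterating (4) twice). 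The crux is the symmetry $c(v^{-1}, -1) = c(v, -1)$, which I derive by applying (3) with $(u,v,w) \mapsto (v, v^{-1}, -1)$ and observing that $c(v, -v^{-1}) = c(v, 1) = 1$ by (4). Assembling these pieces gives (b). Then (c) is a purely formal consequence: by (2), $c(u^2, v) = c(v^{-1}, u^2)$; applying (b) to the right-hand side gives $c(v^{-1}, u) \, c(u, v^{-1})^{-1}$; two further uses of (2) identify $c(v^{-1}, u) = c(u, v)$ and $c(u, v^{-1}) = c(v, u)$, and (b) recloses the loop to yield $c(u, v^2)$.

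For (a), two successive applications of the cocycle (3) reduce the claim first to $c(uv^2, w) = c(u, w) \, c(v^2, w)$ and then to $c(v^2, uw) = c(v^2, u) \, c(v^2, w)$. Using (c) to transfer the square from the left to the right argument in each of the three terms transforms the claim into $c(v, (uw)^2) = c(v, u^2) \, c(v, w^2)$. Expanding each term via (b) writes both sides in terms of the ``unsquared'' symbols $c(v, uw), c(uw, v), c(v, u), c(u, v), c(v, w), c(w, v)$; after the cancellations the residual identity is precisely the cocycle relation (3) applied to the triple $(u, v, w)$. The main obstacle is the proof of (b), and within it the symmetry sub-identity $c(v^{-1}, -1) = c(v, -1) = c(v, v)$: without this ``centrality of $-1$,'' the cascade of cocycle manipulations will not close up. Once (b) is secured, (c) and (a) follow by formal rearrangement.
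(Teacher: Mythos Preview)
Your proposal is correct. The paper does not actually give its own proof of this lemma: it simply cites Matsumoto \cite{mat:pres} and states the identities. Your derivation from the relations of Proposition~\ref{prop:mat} is exactly the kind of argument Matsumoto uses, and each step checks out: the third application of (3) with $(u^{-1},v^{-1},v)$ yields $c((uv)^{-1},v)=c(v^{-1},v)\,c(u^{-1},v^{-1})^{-1}$; the double application of (2) gives $c(u^{-1},v^{-1})=c(u,v)$; the auxiliary facts $c(v,v)=c(v,-1)$ and $c(v^{-1},-1)=c(v,-1)$ follow from (4) and (3) as you describe; and assembling gives $c(uv,v)=c(v,v)\,c(v,u)^{-1}$, hence (b). Part (c) then follows formally from (b) and (2), and your reduction of (a) via two uses of (3) and then (c), (b) does indeed close up on the cocycle relation (3) for the triple $(u,v,w)$. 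There is nothing to compare against in the paper itself.
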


Furthermore, we have the following theorem of Matsumoto and Moore (\cite{mat:pres},\cite{moore:pres}):

\begin{thm}\label{thm:mm}
Let $F$ be an infinite field. Then 
\begin{enumerate}
\item The sequence 
\[
1\to K_2(2,F)\to \st{2}{F}\to \spl{2}{F}\to 1
\]
is the universal central extension of the perfect group $\spl{2}{F}$. 

In particular, 
$K_2(2,F)\cong\hoz{2}{\spl{2}{F}}$ naturally. 

\item $K_2(2,F)$ has the following presentation: It is generated by the symbols $c(u,v)$, 
$u,v\in F^\times$, subject to the five relations of Proposition \ref{prop:mat}.
\end{enumerate}
\end{thm}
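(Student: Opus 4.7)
The plan is to prove (1) first by standard central-extension techniques, then derive (2) by building an explicit inverse to the natural surjection from the abstract group on the five relations onto $K_2(2,F)$. Throughout, I will exploit that $F$ is infinite to choose $\lambda \in F^\times$ with $\lambda^2-1 \in F^\times$.

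For (1), $\spl{2}{F}$ is perfect by Proposition \ref{prop:e2}(2). I next verify that $\st{2}{F}$ is perfect: for any $r \in F$, pick $u \in F^\times$ with $u^2 \neq 1$; Corollary \ref{cor:conjh} then gives $[h_{12}(u), x_{12}(r/(u^{-2}-1))] = x_{12}(r)$, and analogously $x_{21}(r)$ is a commutator, so the generators lie in $[\st{2}{F}, \st{2}{F}]$. For centrality of $K_2(2,F)$ in $\st{2}{F}$, I take $z \in K_2(2,F)$ and consider $k(t,z) := z x_{12}(t) z^{-1} x_{12}(t)^{-1}$, which lies in $K_2(2,F)$ and is additive in $t$ by the first Steinberg relation; it then suffices to show it vanishes for a convenient $t$, which I obtain by reducing $z$ to Bruhat-like normal form via Lemma \ref{lem:conjw} and Corollary \ref{cor:conjh}. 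Universality then follows: given a central extension $1 \to K \to \tilde{G} \to \spl{2}{F} \to 1$, I lift each $E_{ij}(t)$ to $\tilde{x}_{ij}(t) \in \tilde{G}$, use additivity of $t \mapsto \tilde{x}_{ij}(t)$ (achievable modulo the centre by infinitude of $F$) to enforce the first Steinberg relation, and the centrality of $K$ together with perfectness of $\st{2}{F}$ to enforce the second. The isomorphism $K_2(2,F) \cong \hoz{2}{\spl{2}{F}}$ is the standard identification of the Schur multiplier of a perfect group with the kernel of its universal central extension.

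For (2), let $\bar{K}$ be the abstract abelian group generated by symbols $\bar{c}(u,v)$, $u,v \in F^\times$, subject to the five relations of Proposition \ref{prop:mat}. The assignment $\bar{c}(u,v) \mapsto c(u,v)$ gives a well-defined surjection $\bar{K} \twoheadrightarrow K_2(2,F)$, with surjectivity obtained from the Bruhat decomposition in $\st{2}{F}$: every element of $\ker(\phi)$ can be reduced, modulo $C(2,F)$, to a product of Steinberg generators which projects to the identity and hence is trivial. To construct the inverse, I would realise a set-theoretic central extension $\tilde{G} := \bar{K} \times \spl{2}{F}$ whose multiplication is computed by writing both factors in Bruhat normal form, multiplying them in $\spl{2}{F}$, and collecting the correction into $\bar{K}$. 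Provided this multiplication is associative, $\tilde{G}$ is a central extension of $\spl{2}{F}$ by $\bar{K}$ which, by the universal property established in (1), receives a canonical map from $\st{2}{F}$; this map restricts on $K_2(2,F)$ to the required inverse of the surjection.

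The main obstacle is the verification that the multiplication on $\tilde{G}$ is associative, equivalently, that the Bruhat cocycle $\spl{2}{F} \times \spl{2}{F} \to \bar{K}$ is well defined. This reduces to a case analysis indexed by which Bruhat cells (big or small) the three factors occupy. In each case, the associativity identity reduces, via repeated use of the Steinberg relations and Lemma \ref{lem:square}, to precisely one of the five relations of Proposition \ref{prop:mat} --- which is exactly why the presentation comes out with these generators and these relations. The computation is long but mechanical, and constitutes the genuine content of the Matsumoto--Moore theorem.
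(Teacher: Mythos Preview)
The paper does not give its own proof of this theorem: it is stated with attribution to Matsumoto \cite{mat:pres} and Moore \cite{moore:pres} and used as a black box throughout. So there is no in-paper argument to compare your proposal against.

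On the merits of your sketch: your architecture for part (2) --- define the abstract symbol group $\bar{K}$, build a Bruhat-type $2$-cocycle $\spl{2}{F}\times\spl{2}{F}\to\bar{K}$, verify associativity by a case analysis over Bruhat cells, and then invoke universality from (1) to produce the inverse --- is precisely Matsumoto's strategy, and you have correctly identified the associativity verification as where all the work lies.

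There is, however, a genuine gap in your treatment of centrality in (1). You assert that $k(t,z):=[z,x_{12}(t)]$ is additive in $t$, but the Steinberg relation only gives
\[
k(s+t,z)=k(s,z)\cdot x_{12}(s)\,k(t,z)\,x_{12}(s)^{-1},
\]
which collapses to $k(s,z)k(t,z)$ only if $k(t,z)$ is already known to commute with $x_{12}(s)$ --- exactly what you are trying to establish. For $n=2$ centrality of $K_2(2,F)$ is \emph{not} automatic (indeed it can fail for general rings), and the standard route proceeds in the opposite order: one first proves a Bruhat normal form in $\st{2}{F}$ --- every element is, modulo the symbols $c(u,v)$, uniquely of the shape $x_{12}(a)h_{12}(u)$ or $x_{12}(a)w_{12}(u)x_{12}(b)$ --- from which it follows that $K_2(2,F)=C(2,F)$ is generated by the $c(u,v)$. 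Since the $c(u,v)$ are individually checked to be central (as the paper notes just before Proposition \ref{prop:mat}), centrality of the whole kernel follows, and then universality. In other words, the logical dependency is normal form $\Rightarrow$ $K_2(2,F)=C(2,F)$ $\Rightarrow$ centrality $\Rightarrow$ universality; your sketch has centrality preceding the normal-form computation, which leaves it unsupported.
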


\subsection{The stabilization homomorphism $K_2(2,F)\to K_2(F)$}
For a field  $F$, 
the Theorem of Matsumoto also gives a presentation of $K_2(n,F)$ for all $n\geq 3$. 
In particular, it follows that $K_2(F)=\milk{2}{F}$, the second Milnor $K$-group of the field 
$F$. The stabilization map $K_2(2,F)\to K_2(F)$ is surjective and sends the symbols 
$c(u,v)$ to the symbols  $\{ u,v\}$ of algebraic $K$-theory.  

Let $\gw{F}$ be the Grothendieck-Witt ring of isometry classes of nondegerate quadratic forms 
over $F$. It is generated by the classes $\an{a}$ of $1$-dimensional forms and the 
map $\sgr{F}\to\gw{F}$ sending $\an{a}\to \an{a}$ is a surjection of rings. The fundamental 
ideal $I(F)$ of $\gw{F}$ is the ideal generated by the elements $\pf{a}:=\an{a}-1$.  

There is a natural surjective homomorphism of $\sgr{F}$-modules 
\[
K_2(2,F)\to I^2(F),\quad c(u,v)\mapsto \pf{u}\pf{v}.
\]

Furthermore, by a theorem of Milnor (\cite{milnor:quad}) there is also a surjective map 
$\milk{2}{F}\to I^2(F)/I^3(F)$ sending the symbol $\{ u,v\}$ to the class of 
$\pf{u}\pf{v}$.  The kernel of this map is precisely $2\milk{2}{F}$. 

By a result essentially due to 
Suslin (\cite{sus:tors}, but see also \cite{mazz:sus})  
for an infinite field $F$, we also have the following description of $K_2(2,F)$:
\begin{thm}\label{thm:susmazz} Let $F$ be an infinite field.
The maps $K_2(2,F)\to K_2(F)$, $K_2(2,F)\to I^2(F)$ induce an isomorphism of $\sgr{F}$-modules
\[
K_2(2,F)\to \milk{2}{F}\times_{I^2(F)/I^3(F)}I^2(F), 
\quad c(u,v)\mapsto [u,v]:= (\mil{u}{v},\pf{u}\pf{v}).
\]
\end{thm}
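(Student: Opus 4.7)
The plan is to prove that the natural $\sgr{F}$-equivariant homomorphism
\[
\Phi\colon K_2(2,F) \longrightarrow P := \milk{2}{F}\times_{I^2(F)/I^3(F)}I^2(F), \qquad c(u,v)\mapsto (\mil{u}{v},\pf{u}\pf{v}),
\]
is an isomorphism. First, I would check well-definedness: the coordinate maps $c(u,v)\mapsto \mil{u}{v}$ and $c(u,v)\mapsto \pf{u}\pf{v}$ are already known to respect the five Matsumoto relations of Proposition \ref{prop:mat}, and by Milnor's theorem (invoked just before the statement) the pair $(\mil{u}{v},\pf{u}\pf{v})$ lies in the fiber product $P$ because $\mil{u}{v}$ and $\pf{u}\pf{v}$ have the same image in $I^2(F)/I^3(F)$.

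Setting $K := \ker{K_2(2,F)\to \milk{2}{F}}$, the problem reduces to showing that the restriction $K\to I^3(F)$ (which lands in $I^3(F)$ again by Milnor) is an isomorphism: since both the stabilization map and the quotient $I^2(F)\to I^2(F)/I^3(F)$ are surjective, a standard diagram chase then gives that $\Phi$ is an isomorphism. For surjectivity of $K\to I^3(F)$, note that $\sgr{F}$ acts trivially on $\milk{2}{F}$ (since $\mil{au}{v}=\mil{a}{v}+\mil{u}{v}$), so for all $a,b,c\in F^\times$ the element
\[
(\an{a}-1)c(b,c) \;=\; c(ab,c)\cdot c(a,c)^{-1}\cdot c(b,c)^{-1}
\]
lies in $K$ and maps under $K_2(2,F)\to I^2(F)$ to $\pf{ab}\pf{c}-\pf{a}\pf{c}-\pf{b}\pf{c}=\pf{a}\pf{b}\pf{c}$, using the identity $\pf{ab}=\pf{a}+\pf{b}+\pf{a}\pf{b}$. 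Since triple Pfister forms generate $I^3(F)$ as an abelian group, this yields surjectivity.

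The main obstacle is injectivity of $K\to I^3(F)$. The strategy, following Mazzoleni's elementary approach, is to construct an inverse $\Psi\colon P\to K_2(2,F)$ explicitly. Viewing $P$ as generated by symbols $[u,v]:=(\mil{u}{v},\pf{u}\pf{v})$ together with classes $(0,\pf{a}\pf{b}\pf{c})$ lifting triple Pfisters, I would set $\Psi([u,v]):=c(u,v)$ and $\Psi(0,\pf{a}\pf{b}\pf{c}):=(\an{a}-1)c(b,c)$. The hard part is verifying that $\Psi$ is well-defined: every defining relation of $P$ --- comprising the Matsumoto relations for the $[u,v]$, the $\Z$-bilinearity and Steinberg relation of $\milk{2}{F}$, and the Arason-type relations among triple Pfisters in $I^3(F)$ --- must be shown to follow from Matsumoto's five relations in $K_2(2,F)$, using Lemmas \ref{lem:symb} and \ref{lem:square} and the $\sgr{F}$-module structure. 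This reduces to a finite list of symbol identities, after which $\Psi$ and $\Phi$ are mutually inverse on generators and the theorem follows.
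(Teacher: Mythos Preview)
The paper does not actually prove this theorem: it is stated with attribution to Suslin \cite{sus:tors} and Mazzoleni \cite{mazz:sus}, and no argument is given in the text. So there is no ``paper's own proof'' to compare against; your proposal is an attempt to reconstruct the Mazzoleni-style elementary argument that the paper only cites.

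Your treatment of well-definedness and of the surjectivity of $K\to I^3(F)$ is correct and is indeed the easy half. The gap is in your injectivity strategy. You propose to build an inverse $\Psi\colon P\to K_2(2,F)$ by declaring $\Psi([u,v])=c(u,v)$ and $\Psi(0,\pf{a}\pf{b}\pf{c})=(\an{a}-1)c(b,c)$, and then ``checking relations''. But you have not given, and do not have available, an independent presentation of the fiber product $P$ by these generators and a specified list of relations. The symbols $[u,v]$ already generate $P$ (since $\Phi$ is surjective on generators), so adding the elements $(0,\pf{a}\pf{b}\pf{c})$ as separate generators only creates a consistency obligation, not a usable presentation. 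Saying that the relations of $P$ ``comprise the Matsumoto relations for the $[u,v]$'' together with relations in $\milk{2}{F}$ and $I^3(F)$ is precisely what is at stake: if you knew that the $[u,v]$ in $P$ satisfy \emph{only} the Matsumoto relations, you would already have the theorem. As written, the construction of $\Psi$ is circular.

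What actually has to be done for injectivity (and what Mazzoleni carries out) is to work directly inside $K_2(2,F)$: show that every element of $K=\ker(K_2(2,F)\to\milk{2}{F})$ can be written as a product of the ``triple Pfister'' elements $(\an{a}-1)c(b,c)$, and then prove that any relation among triple Pfister forms in $I^3(F)$ lifts to a relation among these elements in $K_2(2,F)$. Both steps are genuine computations with the Matsumoto relations and are the substantive content of the result; they cannot be bypassed by positing an inverse on a presentation that has not been established.
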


\begin{cor}\label{cor:mwk}
 Let $F$ be an infinite field.
There is a natural short exact sequence of $GW(F)$-modules
\[
0\to I^3(F)\to K_2(2,F)\to \milk{2}{F}\to 0.
\]
\end{cor}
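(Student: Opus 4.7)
The plan is to derive the corollary directly from the fiber-product description of $K_2(2,F)$ in Theorem \ref{thm:susmazz}: under the isomorphism $K_2(2,F)\cong \milk{2}{F}\times_{I^2(F)/I^3(F)}I^2(F)$, I would take the map on the right of the desired short exact sequence to be the first projection of the fiber product. By the discussion preceding Theorem \ref{thm:mm}, this first projection coincides with the stabilization map $K_2(2,F)\to K_2(F)=\milk{2}{F}$, which is surjective and sends $c(u,v)$ to $\mil{u}{v}$.

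To identify the kernel, note that an element of the fiber product is a pair $(\alpha,\beta)\in \milk{2}{F}\times I^2(F)$ for which Milnor's map sends $\alpha$ to the class of $\beta$ modulo $I^3(F)$. Such a pair lies in the kernel of the first projection iff $\alpha=0$, in which case the compatibility condition forces the class of $\beta$ in $I^2(F)/I^3(F)$ to vanish, i.e. $\beta\in I^3(F)$. Conversely every $\beta\in I^3(F)$ gives a valid pair $(0,\beta)$, so the assignment $\beta\mapsto(0,\beta)$ identifies the kernel canonically with $I^3(F)$, yielding the desired short exact sequence.

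It remains to verify that the sequence is one of $\gw{F}$-modules. The submodule $I^3(F)\subseteq\gw{F}$ is naturally a $\gw{F}$-module by multiplication. On $\milk{2}{F}$ the $\sgr{F}$-action is trivial: the formula $\an{a}c(u,v)=c(au,v)c(a,v)^{-1}$ from the Remark following Lemma \ref{lem:square} projects under $c(u,v)\mapsto\mil{u}{v}$ to $\mil{au}{v}-\mil{a}{v}=\mil{u}{v}$ by bilinearity of the Milnor symbol, so the action factors through the rank homomorphism $\gw{F}\to\Z$. Since $I(F)\cdot I^2(F)\subseteq I^3(F)$, the quotient $I^2(F)/I^3(F)$ is also annihilated by $I(F)$ and carries the same rank action, so Milnor's map $\milk{2}{F}\to I^2(F)/I^3(F)$ is $\gw{F}$-linear; hence the fiber product, and with it the short exact sequence, is naturally one of $\gw{F}$-modules. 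The only substantial input is Theorem \ref{thm:susmazz} itself; once that is in hand, no real obstacle arises and the remaining verifications are formal.
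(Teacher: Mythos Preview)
Your proof is correct and follows exactly the route the paper intends: the corollary is stated with no proof because it is immediate from the fiber-product description in Theorem~\ref{thm:susmazz}, and you have simply spelled out that immediate deduction. Two minor reference slips: the formula $\an{a}c(u,v)=c(au,v)c(a,v)^{-1}$ appears in the Remark following Proposition~\ref{prop:mat} (not Lemma~\ref{lem:square}), and the surjectivity of the stabilization map is discussed in the subsection \emph{after} Theorem~\ref{thm:mm}, not before it.
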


\subsection{Milnor-Witt $K$-theory}\label{sec:mwk}
The homology of the special linear group of a field is related to the Milnor-Witt $K$-theory 
of the field (see, for example, \cite{hutchinson:tao3}). 

Milnor-Witt $K$-theory of a field $F$ is a $\Z$-graded algebra $\mwk{\bullet}{F}$ generated 
by symbols $[ u]$, $u\in F^\times$ in degree $1$ and a symbol $\eta$ in degree $-1$, satisfying 
certain relations (see \cite{morel:trieste} for details). It arises naturally as a ring of 
operations in stable $\mathbb{A}^1$-homotopy theory.

A deep theorem of Morel asserts:
\begin{thm}\label{thm:morel}[\cite{morel:puiss}]
There is a natural isomorphism of graded rings 
\[
\mwk{\bullet}{F}\cong \milk{\bullet}{F}\times_{I^{\bullet}(F)/I^{\bullet +1}(F)}I^\bullet(F).
\]

(Here, when $n<0$, $\mwk{n}{F}:=0$ and $I^n(F):=W(F)$, the Witt ring of the field.)
\end{thm}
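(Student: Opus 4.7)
The plan is to construct a graded ring homomorphism
$\psi : \mwk{\bullet}{F} \to \milk{\bullet}{F}\times_{I^\bullet(F)/I^{\bullet+1}(F)} I^\bullet(F)$
via the universal presentation of Milnor--Witt $K$-theory, and then to show it is a bijection in each degree by comparing the natural filtrations on both sides.

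First I would define $\psi$ on generators: the degree-one generator $[u]$ is sent to the pair $(\{u\},\pf{u})$, which lies in the fiber product because the Milnor surjection $\milk{1}{F}\twoheadrightarrow I(F)/I^2(F)$ sends $\{u\}$ to the class of $\pf{u}$; the degree $-1$ generator $\eta$ is sent to $\an{1}\in W(F)=I^{-1}(F)$, paired with $0$ in $\milk{-1}{F}$. To extend $\psi$ to a graded ring map, I must verify the Milnor--Witt defining relations in the fiber product. The Steinberg relation $[u][1-u]=0$ follows from $\{u,1-u\}=0$ in $\milk{2}{F}$ together with the classical vanishing $\pf{u}\pf{1-u}=0$ in $I^2(F)$; the twisted multiplicativity $[uv]=[u]+[v]+\eta[u][v]$ reduces on the Milnor side to $\{uv\}=\{u\}+\{v\}$ and on the Pfister side to the elementary expansion $\pf{uv}=\pf{u}+\pf{v}+\pf{u}\pf{v}$ in $\gw{F}$; the remaining $\eta$-relations become tautologies in $W(F)$.

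Next I would prove $\psi$ is an isomorphism by comparing the short exact sequence
$0\to \eta\cdot \mwk{n+1}{F}\to \mwk{n}{F}\to \milk{n}{F}\to 0$
on the Milnor--Witt side with its fiber-product analogue
$0\to I^{n+1}(F)\to \milk{n}{F}\times_{I^n(F)/I^{n+1}(F)}I^n(F)\to \milk{n}{F}\to 0$.
Since $\psi$ visibly commutes with the projections onto $\milk{n}{F}$, a Five Lemma argument reduces the question to an induction showing that $\psi$ identifies $\eta\cdot \mwk{n+1}{F}$ with $I^{n+1}(F)$. This uses that $\psi(\eta\cdot x)=\an{1}\cdot \psi(x)$ realises the inclusion $I^{n+1}(F)\hookrightarrow I^n(F)$.

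The hard part is the base of this induction, and equivalently the injectivity of $\psi$: both ultimately rest on Milnor's conjecture (Voevodsky's theorem) identifying $I^n(F)/I^{n+1}(F)$ with $\milk{n}{F}/2$. This is what guarantees that the Milnor--Witt relations are rich enough to account for every relation among pure symbols in the fiber product; without it one cannot exclude extra elements in the fiber product that fail to come from $\mwk{\bullet}{F}$. Granted this deep input, the induction closes uniformly in the degree and yields the graded ring isomorphism claimed by Morel.
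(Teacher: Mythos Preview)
The paper does not prove this statement: Theorem~\ref{thm:morel} is quoted as ``a deep theorem of Morel'' with a citation to \cite{morel:puiss}, and no argument is supplied. There is therefore no proof in the paper against which to compare your attempt.

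As a standalone sketch, your outline follows the standard route and correctly identifies the decisive external input, namely the Milnor conjecture $I^n(F)/I^{n+1}(F)\cong\milk{n}{F}/2$ (Orlov--Vishik--Voevodsky). Your verification that the proposed $\psi$ respects the Milnor--Witt relations is correct: the identity $\pf{uv}=\pf{u}+\pf{v}+\pf{u}\pf{v}$ in $\gw{F}$ handles twisted multiplicativity, and $\pf{u}\pf{1-u}=0$ in $I^2(F)$ handles the Steinberg relation.

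One point deserves more care. Your inductive scheme is not clearly oriented: showing that $\psi$ carries $\eta\cdot\mwk{n+1}{F}$ isomorphically onto $I^{n+1}(F)$ seems to require information about $\psi$ in degree $n+1$, not in a lower degree, so the direction of the induction and its base case need to be made explicit. In practice, surjectivity of $\psi$ in every degree is elementary (the $n$-fold Pfister elements $\pf{u_1}\cdots\pf{u_n}$ generate $I^n(F)$ and are hit by $[u_1]\cdots[u_n]$), so the Five Lemma reduction is really about injectivity. You correctly flag that injectivity is where the Milnor conjecture enters, but the passage from ``$I^n/I^{n+1}\cong\milk{n}{F}/2$'' to ``$\psi_n$ is injective'' is precisely the substantive step in Morel's argument, and your sketch does not indicate how it goes. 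Filling that in is the real content of the proof.
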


The theorem of Suslin on the structure of $K_2(2,F)$ quoted above, implies 
\begin{prop}
There is a natural isomorphism $K_2(2,F)\cong \mwk{2}{F}$, sending $c(u,v)$ to 
$[u][v]$. 
\end{prop}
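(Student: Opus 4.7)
The proposition is essentially a formal consequence of two deep results already recorded in the excerpt, so the plan is to combine them and then check that the indicated symbols are matched correctly.

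First, I would apply Morel's theorem (Theorem \ref{thm:morel}) in degree $2$ to obtain a natural isomorphism of graded pieces
\[
\mwk{2}{F}\;\cong\;\milk{2}{F}\times_{I^2(F)/I^3(F)}I^2(F).
\]
Next, Suslin's theorem (Theorem \ref{thm:susmazz}) provides a natural isomorphism
\[
K_2(2,F)\;\cong\;\milk{2}{F}\times_{I^2(F)/I^3(F)}I^2(F),\qquad c(u,v)\longmapsto (\mil{u}{v},\pf{u}\pf{v}).
\]
Composing one with the inverse of the other immediately yields the desired natural isomorphism $K_2(2,F)\cong \mwk{2}{F}$. Naturality in $F$ is inherited from the naturality of both source isomorphisms.

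The only substantive point left is to verify that the composite sends $c(u,v)$ to $[u][v]$. For this I would chase the symbol through Morel's identification: on the Milnor side, the generator $[u][v]\in \mwk{2}{F}$ maps by construction to the symbol $\mil{u}{v}\in \milk{2}{F}$, and on the $I^\bullet$ side it maps to the class $\pf{u}\pf{v}\in I^2(F)$. These are exactly the two components of the image of $c(u,v)$ under Suslin's isomorphism, so $c(u,v)$ and $[u][v]$ correspond to the same element of the fibre product. Thus the induced isomorphism $K_2(2,F)\to \mwk{2}{F}$ sends $c(u,v)\mapsto [u][v]$ as claimed.

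There is no real obstacle here beyond quoting the two results correctly; the ``hard part'' (which has already been done in Theorems \ref{thm:susmazz} and \ref{thm:morel}) is the identification of each side with the same fibre product, and both presentations have been arranged so that the matching of symbols is tautological. One should, however, be slightly careful that the maps $K_2(2,F)\to \milk{2}{F}$ and $K_2(2,F)\to I^2(F)$ used in Theorem \ref{thm:susmazz} agree, up to the sign conventions implicit in Morel's presentation, with the maps $\mwk{2}{F}\to \milk{2}{F}$ and $\mwk{2}{F}\to I^2(F)$ coming from Morel's description; this is where one pins down that the image of $c(u,v)$ is exactly $[u][v]$ rather than $\pm [u][v]$ or a similarly rescaled version.
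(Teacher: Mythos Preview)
Your proposal is correct and matches the paper's approach exactly: the paper states the proposition as an immediate consequence of Suslin's theorem (Theorem \ref{thm:susmazz}) combined with Morel's theorem (Theorem \ref{thm:morel}), and you have simply spelled out that composition and the tautological matching of symbols through the common fibre product.
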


\section{The map from $\hoz{2}{\spl{2}{F}}$ to $K_2(2,F)$}  \label{sec:map}
Let $A$ be a commutative ring for which $E_2(A)=\spl{2}{A}$ is a perfect group. Suppose further that 
the group extension
\[
\xymatrix{
1\ar[r]
& K_2(2,A)\ar[r]
& \st{2}{A}\ar^-{\phi}[r]
& \spl{2}{A}\ar[r]
& 1
}
\]
is a central extension.

Let $s:\spl{2}{A}\to \st{2}{A}$ be a section of $\phi$. Then there is a corresponding 
$2$-cocycle $f_s:\spl{2}{A}\times \spl{2}{A}\to K_2(2,A)$ defined by 
\[
f_s(X,Y):= s(X)s(Y)s(XY)^{-1}.
\] 
This yields a cohomology class $f\in \coh{2}{\spl{2}{A}}{K_2(2,A)}$ which is independent of 
the choice of section $s$. 

However, since $\hoz{1}{\spl{2}{A}}=0$, the universal coefficient theorem tells us that 
there is a natural isomorphism 
\[
\coh{2}{\spl{2}{A}}{K_2(2,A)}\cong \mathrm{Hom}(\hoz{2}{\spl{2}{A}},K_2(2,A))
\]
described as follows: Let $z\in \coh{2}{\spl{2}{A}}{K_2(2,A)}$ be represented by the $2$-cocycle 
$h$. Then $h$ induces a homomorphism 
\[
\bar{F}_2\to K_2(2,F),\quad  \sum_in_i[X_i|Y_i]\mapsto \prod_ih(X_i,Y_i)^{n_i}
\]
which vanishes on boundaries, and thus in turn induces a homomorphism
\[
\bar{h}: \hoz{2}{\spl{2}{A}}\to K_2(2,F).
\]

In particular, the cocycle $f_s$ above induces the homomorphism 
\[
\hoz{2}{\spl{2}{A}}\to K_2(2,F), \quad  \sum_in_i[X_i|Y_i]\mapsto \prod_if_s(X_i,Y_i)^{n_i}
\] 

This homomorphism is an isomorphism precisely when the central extension is universal. In particular,
it is an isomorphism when $A$ is an infinite field, by the theorem of Matsumoto-Moore.

We now specialise to the case of a  field $F$. 

For our calculations, we will use the following section $s:\spl{2}{F}\to \st{2}{F}$:
\[
s\left(\matr{a}{b}{c}{d}\right):=
\left\{
\begin{array}{ll}
x_{12}(ab)h_{12}(a), & \mbox{ if } c=0,\\
x_{12}(ac^{-1})w_{12}(-c^{-1})x_{12}(dc^{-1}), & \mbox{ if }c\not=0.\\
\end{array}
\right.
\]

Note that, in particular, we have 
\[
s(E_{ij}(a))=x_{ij}(a)\mbox{ and } s(D(u))=h_{12}(u)
\]
when $i\not= j\in \{ 1,2\}$, $a\in A$ and $u\in F^\times$. 

Furthermore, functoriality of the constructions above 
guarantee that the induced homomorphism
\[
\bar{f}:\hoz{2}{\spl{2}{F}}\to K_2(2,F)
\]
is a map of $\Z[F^\times]$-modules. 
Recall that this homomorphism is induced by the homomorphism
\[
\bar{F}_2(\spl{2}{F})\to K_2(2,F), [X|Y]\mapsto f_s(X,Y)=s(X)s(Y)s(XY)^{-1}.
\] 

\begin{lem}\label{lem:f}
 Let $F$ be a field. Let $u,v\in F^\times$ and $a,b\in F$. 
 Let
\[
X=\matr{u}{a}{0}{u^{-1}}, \quad Y= \matr{v}{b}{0}{v^{-1}}
\]
Then 
$
f_s(X,Y)=c(u,v).
$
\end{lem}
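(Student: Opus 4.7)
The plan is to compute $f_s(X,Y) = s(X)s(Y)s(XY)^{-1}$ directly, since all three matrices are upper triangular and so lie in the ``$c=0$'' branch of the definition of the section $s$.

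First, I would write down the three values: $s(X) = x_{12}(ua)h_{12}(u)$ and $s(Y) = x_{12}(vb)h_{12}(v)$. Computing the product,
\[
XY = \matr{uv}{ub+av^{-1}}{0}{(uv)^{-1}},
\]
so $s(XY) = x_{12}\bigl(uv(ub+av^{-1})\bigr)h_{12}(uv) = x_{12}(u^2vb + ua)\,h_{12}(uv)$.

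Next, I would simplify $s(X)s(Y) = x_{12}(ua)h_{12}(u)x_{12}(vb)h_{12}(v)$ by commuting $h_{12}(u)$ past $x_{12}(vb)$. Corollary \ref{cor:conjh} gives $h_{12}(u)^{-1}x_{12}(vb)h_{12}(u) = x_{12}(u^{-2}vb)$, equivalently $h_{12}(u)x_{12}(vb) = x_{12}(u^2vb)h_{12}(u)$. Using the additivity relation $x_{12}(s)x_{12}(t)=x_{12}(s+t)$, this collapses $s(X)s(Y)$ to
\[
x_{12}(ua + u^2vb)\,h_{12}(u)h_{12}(v).
\]

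Finally, I would multiply by $s(XY)^{-1} = h_{12}(uv)^{-1}x_{12}\bigl(-(u^2vb+ua)\bigr)$ to obtain
\[
f_s(X,Y) = x_{12}(ua+u^2vb)\cdot\bigl(h_{12}(u)h_{12}(v)h_{12}(uv)^{-1}\bigr)\cdot x_{12}\bigl(-(ua+u^2vb)\bigr),
\]
and then invoke the centrality of the symbol $c(u,v) = h_{12}(u)h_{12}(v)h_{12}(uv)^{-1}$ in $\st{2}{F}$ to slide it past the $x_{12}$-factors, which cancel. There is really no hard step: the argument is a careful bookkeeping of the Steinberg relations and the conjugation formula, and the only subtlety is making sure the sign and the factor $u^2$ (not $u^{-2}$) appear correctly when pushing $h_{12}(u)$ to the right of $x_{12}(vb)$.
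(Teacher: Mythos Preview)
Your proposal is correct and follows essentially the same approach as the paper: compute $s(X)$, $s(Y)$, $s(XY)$ using the $c=0$ branch of the section, push $h_{12}(u)$ past $x_{12}(vb)$ via Corollary~\ref{cor:conjh}, and then use centrality of $c(u,v)$ to cancel the flanking $x_{12}$-terms. The only cosmetic difference is that the paper writes the computation as a single chain of equalities rather than first simplifying $s(X)s(Y)$ separately.
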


\begin{proof} We have,
\[
s(X)=x_{12}(au)h_{12}(u),\quad s(Y)=x_{12}(bv)h_{12}(v)\mbox{ and } s(XY)= x_{12}(bu^2v+au)h_{12}(uv).
\]

Thus 
\begin{eqnarray*}
f(X,Y)&=& x_{12}(au)h_{12}(u)x_{12}(bv)h_{12}(v)h_{12}(uv)^{-1}x_{12}(-bu^2v-au)\\
&=& x_{12}(au)x_{12}(bv)^{h_{12}(u)^{-1}}h_{12}(u)h_{12}(v)h_{12}(uv)^{-1}x_{12}(-bu^2v-au)\\
&=& x_{12}(au)x_{12}(bu^2v)c(u,v)x_{12}(-bu^2v)x_{12}(-au)\mbox{\quad by Corollary 
\ref{cor:conjh}}\\
&=& c(u,v)\mbox{\quad since $c(u,v)$ is central.} \\
\end{eqnarray*}

\end{proof}

\begin{cor}\label{cor:square}
Let $F$ be a field. Let $a,b\in F^\times$. Then 
\[
\left([D(a)|D(b)]-[D(b)|D(a)]\right)\otimes 1\in F_2(\spl{2}{F})\otimes \Z 
\]
is a cycle and the corresponding homology class maps to $c(a^2,b)$ under the natural 
isomorphism $\hoz{2}{\spl{2}{k}}\cong K_2(2,F)$ induced by $f_s$.
\end{cor}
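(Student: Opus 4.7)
The plan is to unpack the two claims (cycle and symbol image) using the boundary formula for the bar complex, Lemma \ref{lem:f}, and the square-symbol identity in Lemma \ref{lem:square}.

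First I would verify that the chain is a cycle. In $\bar F_\bullet(\spl{2}{F}) = F_\bullet(\spl{2}{F})\otimes_{\Z[\spl{2}{F}]}\Z$ the boundary of a $2$-simplex is $\bar d_2([g_2|g_1]) = [g_2]-[g_2g_1]+[g_1]$. Since $D(a)D(b)=D(ab)=D(ba)=D(b)D(a)$, the contributions from $[D(a)|D(b)]$ and $[D(b)|D(a)]$ cancel term-by-term, so $\bar d_2\bigl(([D(a)|D(b)]-[D(b)|D(a)])\otimes 1\bigr)=0$.

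Next I would compute the image of this cycle under the homomorphism induced by $f_s$, which as discussed in Section \ref{sec:map} sends $[X|Y]\otimes 1$ to $f_s(X,Y)$. Since $D(a)$ and $D(b)$ are upper triangular with diagonal entries $(a,a^{-1})$ and $(b,b^{-1})$ respectively, Lemma \ref{lem:f} applies with $u=a$, $v=b$ and with zero off-diagonal entries to give $f_s(D(a),D(b))=c(a,b)$ and, symmetrically, $f_s(D(b),D(a))=c(b,a)$. Thus the image of the homology class in $K_2(2,F)$ is $c(a,b)c(b,a)^{-1}$.

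Finally I would apply Lemma \ref{lem:square}, which states that $c(u,v^2)=c(u,v)c(v,u)^{-1}=c(u^2,v)$. Setting $u=a$ and $v=b$, this immediately yields $c(a,b)c(b,a)^{-1}=c(a,b^2)=c(a^2,b)$, completing the identification. There is no real obstacle here: all three steps are short formal manipulations, and the only place where anything nontrivial is invoked is the passage from the asymmetric expression $c(a,b)c(b,a)^{-1}$ to the symmetric symbol $c(a^2,b)$, which is precisely the content of Lemma \ref{lem:square}.
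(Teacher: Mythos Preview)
Your proof is correct and follows exactly the same approach as the paper: verify the cycle condition via $D(a)D(b)=D(ab)=D(b)D(a)$, apply Lemma \ref{lem:f} to obtain $c(a,b)c(b,a)^{-1}$, and invoke Lemma \ref{lem:square} to rewrite this as $c(a^2,b)$. The paper's own proof is simply a terser version of what you wrote.
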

\begin{proof}
The first statement in immediate since $D(a)D(b)=D(ab)=D(b)D(a)$. 

The image of this cycle is 
\[
f_s(D(a),D(b))\cdot f_s(D(b),D(a))^{-1}=c(a,b)c(b,a)^{-1}=c(a^2,b)
\]
by Lemma \ref{lem:square}. 
\end{proof}

\section{The Mayer-Vietoris  sequence}

Throughout this section $A$ will denote a Dedekind Domain with field of fractions $K$.
\subsection{The groups $H(I)$} We collect together some basic and well-known facts about 
certain subgroups of $\spl{2}{K}$ (see for example \cite[p. 520]{serre:sl2}).

Let $I$ be a fractional ideal of $A$.

We consider the lattice $\Lambda=\Lambda_I:=A\oplus I \subset K\oplus K=K^2$.

Let $H(I)$ denote the the subgroup 
\[
\{ M\in \spl{2}{K}\ | M\cdot \Lambda= \Lambda\}=
\left\{ \matr{a}{b}{c}{d}\in \spl{2}{K}\ |\ a,d\in A, c\in I,d\in I^{-1}
\right\}=
\tilde{\Gamma}(I,I^{-1}).
\]

Note that, in particular, $H(A)=\spl{2}{A}$.

We also note that if $J$ is any nonzero fractional ideal of $A$, then
\[
H(I)= \{ M\in \spl{2}{K}\ | M\cdot (J\Lambda)= J\Lambda\}
\] 
where 
\[
J\Lambda=J\cdot(A\oplus I)=J\oplus IJ. 
\]

\begin{lem}\label{lem:hi} Let $I$ be a fractional ideal of the $A$.
\begin{enumerate}
\item Suppose that $I'=aI$ where $0\not=a\in K$. Then $H(I')=H(I)^{M(a)}$ where 
\[
M(a)=\matr{a}{0}{0}{1}\in\gl{2}{K}.
\] 
\item Suppose $I$ is an  integral ideal. Let 
\[
A'= \{ r\in K\ | v_{\mathfrak{q}}(r)\geq 0\mbox{ for all }\mathfrak{q}\not|I\}.
\]
Then there exists $M\in\spl{2}{A'}$ such that 
$H(I^2)=\spl{2}{A}^M$. In particular, $H(I^2)\cong \spl{2}{A}$. 
\end{enumerate}
\end{lem}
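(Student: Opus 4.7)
For part (1), the plan is a direct calculation. Conjugating a general element $N = \matr{x}{y}{z}{w} \in H(I)$ by $M(a)$ produces
\[
M(a)^{-1}\, N\, M(a) = \matr{x}{a^{-1}y}{az}{w},
\]
and the defining conditions $y \in I^{-1}$, $z \in I$ for $H(I)$ translate exactly to $a^{-1} y \in (aI)^{-1}$, $az \in aI$, which are the defining conditions for $H(I') = H(aI)$. The reverse inclusion is symmetric. (Equivalently, $M(a)^{-1}\Lambda_I = a^{-1}A \oplus I = a^{-1}\Lambda_{I'}$ is a scalar multiple of $\Lambda_{I'}$, so the stabilisers in $\spl{2}{K}$ coincide.)

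For part (2), the key observation -- and essentially the only non-formal step -- is that the $A$-lattice $I^{-1} \oplus I \subset K^2$ has stabiliser exactly $H(I^2)$ in $\spl{2}{K}$, despite being abstractly isomorphic to $A \oplus A$ as an $A$-module (its Steinitz class $[I^{-1}][I]$ is trivial). A direct unwinding of the inclusions $a I^{-1} \subseteq I^{-1}$, $c I^{-1} \subseteq I$, $b I \subseteq I^{-1}$, $d I \subseteq I$ required to preserve $I^{-1} \oplus I$ forces $a, d \in A$, $c \in I^2$, $b \in I^{-2}$ -- precisely the conditions defining $H(I^2)$.

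The plan is then to realise an explicit $A$-module isomorphism $\Lambda_A \xrightarrow{\sim} I^{-1} \oplus I$ by a determinant-one matrix with entries in $A'$. By invertibility of $I$ in the Dedekind domain $A$, pick $\alpha_1, \alpha_2 \in I$ and $\beta_1, \beta_2 \in I^{-1}$ with $\alpha_1 \beta_1 + \alpha_2 \beta_2 = 1$, and set
\[
N := \matr{\beta_1}{-\beta_2}{\alpha_2}{\alpha_1},
\]
so that $\det N = 1$. All four entries lie in $A'$: the $\alpha_i$ are in $I \subseteq A \subseteq A'$, while the $\beta_i$ lie in $I^{-1} \subseteq A'$ because $v_{\mathfrak{q}}(I^{-1}) = 0$ for every prime $\mathfrak{q} \nmid I$. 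A short check using $\alpha_1 \beta_1 + \alpha_2 \beta_2 = 1$ (Cramer's rule on the $2\times 2$ system $N(x,y)^T = (r,s)^T$, using that $r\alpha_i, s\beta_i \in A$) identifies $N\Lambda_A$ with $I^{-1} \oplus I$. Setting $M := N^{-1} \in \spl{2}{A'}$ then yields
\[
\spl{2}{A}^M \;=\; M^{-1}\spl{2}{A}\,M \;=\; \mathrm{Stab}(M^{-1} \Lambda_A) \;=\; \mathrm{Stab}(I^{-1} \oplus I) \;=\; H(I^2),
\]
as required. The hard part is the initial identification $\mathrm{Stab}(I^{-1} \oplus I) = H(I^2)$: a naive diagonal ansatz $M = \mathrm{diag}(\alpha, \alpha^{-1})$ forces $(\alpha) = I^{-1}$ to be principal in $A$, so simply passing to $A'$ (which principalises $IA'$ but not $I$ itself) is inadequate, and it is the Steinitz-class-zero lattice $I^{-1} \oplus I$ that sidesteps this obstruction.
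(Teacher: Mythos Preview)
Your proposal is correct. Part (1) is essentially the paper's argument: the paper phrases it purely in terms of lattices (multiplication by $M(a)$ identifies $\Lambda_{I'}$ with $a\Lambda_I$), while you give the matrix computation and then note the lattice interpretation as well.

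For part (2), both you and the paper begin by identifying $H(I^2)$ with the stabiliser of $I^{-1}\oplus I$ and then produce an explicit matrix in $\spl{2}{A'}$ carrying $A\oplus A$ to that lattice; the difference is in how the matrix is built. The paper first chooses an auxiliary integral ideal $J$ comaximal with $I$ such that $IJ=xA$ is principal, and realises the isomorphism as a three-step composite
\[
I^{-1}\oplus I \xrightarrow{M(x)} J\oplus I \xrightarrow{N} xA\oplus A \xrightarrow{M(x)^{-1}} A\oplus A,
\]
with $N$ coming from a relation $a+b=1$ ($a\in I$, $b\in J$); it then checks that the composite $M(x)^{-1}NM(x)$ lands in $\spl{2}{A'}$. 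Your route is more direct: from $II^{-1}=A$ and the two-generator property of ideals in a Dedekind domain you extract $\alpha_i\in I$, $\beta_i\in I^{-1}$ with $\alpha_1\beta_1+\alpha_2\beta_2=1$ and write down the conjugating matrix in one stroke, observing that $I^{-1}\subset A'$ immediately. Your argument is a bit cleaner (no auxiliary ideal $J$, no principal ideal $xA$); the paper's version, on the other hand, makes the passage through a principal ideal explicit, which dovetails with its subsequent need to invert a single element.
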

\begin{proof}\ 

\begin{enumerate}
\item This follows from the observation that multiplication by $M(a)$ induces an isomorphism 
of lattices $A\oplus I'\cong a\cdot(A\oplus I)$, and hence conjugation by $M(a)$ induces an 
isomorphism of the stabilizers.

\item We first observe that, since $I^{-1}\cdot \Lambda_I=I^{-1}\oplus I$, 
$H(I^2)$ is the stabilizer of $I^{-1}\oplus I$. 

There exists an integral ideal $J$ of $A$ satisfying: $I+J=A$ and $IJ=xA$ for some nonzero $x\in A$. 
So $J=xI^{-1}$.  
Thus multiplication by $M(x)$ induces an isomorphism $I^{-1}\oplus I\cong J\oplus I$. 

Choose $a\in I, b\in J$ with $a+b=1$. Consider the short exact sequence of $A$-modules 
\[
\xymatrix{
0\ar[r]
&xA\ar^-{f}[r]
&J\oplus I\ar^{g}[r]
&A\ar[r]
&0\\ 
}
\] 
where $g(y)=(y,-y)$ and $f(y,z)=y+z$. There is a splitting $A\to J\oplus I$ given by 
$y\mapsto (by,ay)$. This gives an isomorphism of $A$-modules 
\[
J\oplus I\cong xA\oplus A,\quad (y,z)\mapsto (ay-bz,y+z); 
\]
i.e.multiplication by 
\[
N:= \matr{a}{-b}{1}{1}\in\spl{2}{A}
\]
induces an isomorphism of lattices $J\oplus I\cong xA\oplus A$.

Now, multiplication by $M(x)^{-1}$ induces an isomorphism $xA\oplus A\cong A\oplus A$. 

Putting all of this together, multiplication by 
\[
M:=M(x)^{-1}NM(x)=\matr{a}{-b/x}{x}{1}\in\spl{2}{K}
\]
induces an isomorphism of lattices $I^{-1}\oplus I\cong A\oplus A$, and thus conjugation by $M$ 
induces an isomorphism of stabilizers as required.

Finally, we note that since $xA=IJ$ and $bA=JK$ for some integral ideal $K$, $(b/x)A=KI^{-1}$ and 
hence $b/x\in A'$. Thus 
$M\in\spl{2}{A'}$ as claimed. 
\end{enumerate}
\end{proof}
\begin{cor}
Let $I$ be a fractional ideal of $A$. Suppose that the class of $I$ in 
$\cl{A}$ is a square. Then $H(I)\cong \spl{2}{A}$.
\end{cor}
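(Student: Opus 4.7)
The plan is to chain together the two parts of Lemma \ref{lem:hi}. The hypothesis that $[I]$ is a square in $\cl{A}$ gives us a fractional ideal $J$ with $[I] = [J]^2 = [J^2]$; by replacing $J$ with a suitable scalar multiple $c J$ (which changes $J^2$ by the principal ideal $c^2 A$) we may assume $J$ is integral. This is the only preparatory step and it presents no real difficulty since every ideal class of a Dedekind domain contains an integral representative.

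Having arranged an integral $J$ with $[J^2] = [I]$ in $\cl{A}$, there exists $a \in K^\times$ with $I = a J^2$. Lemma \ref{lem:hi}(1) then yields
\[
H(I) = H(a J^2) = H(J^2)^{M(a)},
\]
so $H(I)$ is conjugate to $H(J^2)$ in $\gl{2}{K}$ and, in particular, isomorphic to it as an abstract group. Now apply Lemma \ref{lem:hi}(2) to the integral ideal $J$: this gives an element $M \in \spl{2}{A'}$ (with $A'$ as in the lemma) conjugating $\spl{2}{A}$ to $H(J^2)$, so $H(J^2) \cong \spl{2}{A}$. Composing the two isomorphisms gives the desired conclusion $H(I) \cong \spl{2}{A}$.

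There is really no main obstacle here: the content is entirely supplied by the preceding lemma, and the corollary is just the observation that its two parts combine cleanly when $[I]$ is a square. The one point worth being explicit about is that the isomorphism we obtain is merely an abstract group isomorphism (realized by conjugation in $\gl{2}{K}$), not an equality of subgroups of $\spl{2}{K}$.
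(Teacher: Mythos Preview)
Your proof is correct and is precisely the argument the paper has in mind: the corollary is stated without proof because it is the immediate combination of parts (1) and (2) of Lemma~\ref{lem:hi}, exactly as you have written it.
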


\begin{rem}
In particular, the ideal $\tilde{\mathfrak{p}}:=\mathfrak{p}A_{\mathfrak{p}}$ 
in $A_{\mathfrak{p}}$ is a principal ideal 
with generator $\pi$, say. It follows from Lemma \ref{lem:hi} that 
\[
H(\tilde{\mathfrak{p}})= \spl{2}{A_{\mathfrak{p}}}^{M(\pi)}.
\]
\end{rem}

Let $\mathfrak{p}$ be a nonzero prime ideal of $A$. Let $n\geq 1$ and 
let $\pi\in A$ satisfy 
$v_{\mathfrak{p}}(\pi)=1$. We let $\gamma_{\pi,n}:H(\mathfrak{p})\to \spl{2}{A/\mathfrak{p}^n}$ be 
the composite 
\[
\xymatrix{
H(\mathfrak{p})\ar[r]
&H(\tilde{\mathfrak{p}})\ar^-{\cong}_-{\mathrm{conj}_{M(1/\pi)}}[r]
&\spl{2}{A_{\mathfrak{p}}}\ar[r]
&\spl{2}{A_{\mathfrak{p}}/\tilde{\mathfrak{p}}^n}\ar[r]^-{\cong}
&\spl{2}{A/\mathfrak{p}^n}
}
\]
\begin{lem} The map $\gamma_{\pi,n}$ is surjective for all $n$ and the kernel of this map is 
independent of the choice of $\pi$. 
\end{lem}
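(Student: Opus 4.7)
The plan is to unpack $\gamma_{\pi,n}$ into an explicit formula on matrix entries, deduce surjectivity by lifting elementary-matrix generators, and prove independence of the kernel by a direct valuation-theoretic calculation.

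First I would compute: for $X = \matr{a}{b}{c}{d} \in H(\mathfrak{p})$ (so $a,d \in A$, $c \in \mathfrak{p}$, $b \in \mathfrak{p}^{-1}$), the conjugation step yields
\[
M(1/\pi)^{-1} X M(1/\pi) = \matr{a}{\pi b}{c/\pi}{d} \in \spl{2}{A_{\mathfrak{p}}},
\]
and hence, using $A_{\mathfrak{p}}/\tilde{\mathfrak{p}}^n \cong A/\mathfrak{p}^n$,
\[
\gamma_{\pi,n}(X) = \matr{\bar a}{\overline{\pi b}}{\overline{c/\pi}}{\bar d}.
\]
In particular $H(\mathfrak{p})$ contains $E_{12}(t)$ for every $t \in \mathfrak{p}^{-1}$ and $E_{21}(s)$ for every $s \in \mathfrak{p}$, and the formula gives $\gamma_{\pi,n}(E_{12}(t)) = E_{12}(\overline{\pi t})$ and $\gamma_{\pi,n}(E_{21}(s)) = E_{21}(\overline{s/\pi})$.

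For surjectivity I would argue as follows. As $t$ ranges over $\mathfrak{p}^{-1}$, the element $\pi t$ ranges over the fractional ideal $(\pi)\mathfrak{p}^{-1}$, which is integral and coprime to $\mathfrak{p}$ (since its $v_{\mathfrak{p}}$ is $0$); hence its image modulo $\mathfrak{p}^n$ is all of $A/\mathfrak{p}^n$. Similarly, given any $y \in A_{\mathfrak{p}}$, the element $\pi y$ lies in $\tilde{\mathfrak{p}}$, so via $\mathfrak{p}/\mathfrak{p}^{n+1} \cong \tilde{\mathfrak{p}}/\tilde{\mathfrak{p}}^{n+1}$ we can find $s \in \mathfrak{p}$ with $s \equiv \pi y \pmod{\tilde{\mathfrak{p}}^{n+1}}$, and then $s/\pi \equiv y \pmod{\tilde{\mathfrak{p}}^n}$. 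Thus every elementary matrix in $\spl{2}{A/\mathfrak{p}^n}$ lies in the image of $\gamma_{\pi,n}$. Since $A/\mathfrak{p}^n$ is a local ring, a standard row-reduction using the fact that at least one of the entries of any unimodular column is a unit yields $\spl{2}{A/\mathfrak{p}^n} = E_2(A/\mathfrak{p}^n)$, so surjectivity of $\gamma_{\pi,n}$ follows.

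For independence of the kernel I would use the explicit formula again: $X = \matr{a}{b}{c}{d} \in \ker(\gamma_{\pi,n})$ if and only if $a-1,\, d-1,\, \pi b,\, c/\pi$ all lie in $\tilde{\mathfrak{p}}^n = \pi^n A_{\mathfrak{p}}$. Combined with the defining conditions of $H(\mathfrak{p})$, and using the elementary intersections $A \cap \tilde{\mathfrak{p}}^n = \mathfrak{p}^n$, $\mathfrak{p}^{-1} \cap \pi^{n-1}A_{\mathfrak{p}} = \mathfrak{p}^{n-1}$, and $\mathfrak{p} \cap \pi^{n+1}A_{\mathfrak{p}} = \mathfrak{p}^{n+1}$ (each immediate from the valuation description of fractional ideals in a Dedekind domain), this translates to the congruence-subgroup condition
\[
a-1,\, d-1 \in \mathfrak{p}^n, \quad b \in \mathfrak{p}^{n-1}, \quad c \in \mathfrak{p}^{n+1},
\]
which makes no reference to $\pi$. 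The only nontrivial input along the way is the local-ring fact $\spl{2}{R} = E_2(R)$; the rest is routine bookkeeping with $\mathfrak{p}$-adic valuations, which I expect to be the only real obstacle.
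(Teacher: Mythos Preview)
Your argument is correct. The surjectivity half matches the paper's proof closely: both compute the explicit formula for $\gamma_{\pi,n}$, observe that $\spl{2}{A/\mathfrak{p}^n}=E_2(A/\mathfrak{p}^n)$, and lift elementary matrices. The paper phrases the $E_{12}$ lift via the factorisation $\pi A=\mathfrak{p}J$ with $J+\mathfrak{p}^n=A$, which is exactly your observation that $(\pi)\mathfrak{p}^{-1}$ is an integral ideal coprime to $\mathfrak{p}$; for $E_{21}$ the paper is more direct (simply take $s=\pi x$ with $x\in A$), but your route via $\mathfrak{p}/\mathfrak{p}^{n+1}\cong\tilde{\mathfrak{p}}/\tilde{\mathfrak{p}}^{n+1}$ works as well.

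For independence of the kernel the two proofs diverge. You compute the kernel explicitly as the congruence-type subgroup $\{a-1,d-1\in\mathfrak{p}^n,\ b\in\mathfrak{p}^{n-1},\ c\in\mathfrak{p}^{n+1}\}$, which is visibly $\pi$-free. The paper instead argues conceptually: if $\pi'=\pi u$ with $u\in A_{\mathfrak{p}}^\times$, then $\gamma_{\pi',n}=f\circ\gamma_{\pi,n}$ where $f$ is conjugation by $M(\bar u^{-1})$ on $\spl{2}{A/\mathfrak{p}^n}$, and since $f$ is an automorphism the kernels agree. The paper's route is shorter and more conceptual; yours has the advantage of producing the explicit description of $\ker\gamma_{\pi,n}$ that the paper states (and needs) immediately afterwards, so in effect you are absorbing that subsequent computation into the proof itself.
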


\begin{proof}
By definition, we have 
\[
\gamma_{\pi,n}\left(\matr{a}{b}{c}{d}\right)=\matr{\bar{a}}{\bar{\pi b}}{\bar{c/\pi}}{\bar{d}}
\]
where 
\[
\bar{x}:= x+\tilde{\mathfrak{p}}^n\in A_{\mathfrak{p}}/\tilde{\mathfrak{p}}^n\cong A/\mathfrak{p}^n.
\]

Since $\spl{2}{A/\mathfrak{p}^n}$ is generated by elementary matrices, we need only show how to 
lift these. We begin by observing that $\pi A=\mathfrak{p}J$ where $J$ is an ideal not contained 
in $\mathfrak{p}$. It follows that $A=\mathfrak{p}^n+J$ for any $n\geq 1$; i.e. the map 
$J\to A/\mathfrak{p}^n$ is surjective. 

Thus, given any $x\in A$ there exists $x'\in J$ with $\bar{x'}=\bar{x}$. Since $x'\in 
J$ it follows that $x'/\pi\in J\cdot (\mathfrak{p}J)^{-1}=\mathfrak{p}^{-1}$. Hence 
$E_{12}(x'/\pi)\in H(\mathfrak{p})$ and 
\[
\gamma_{\pi,n}(E_{12}(x'/\pi))= E_{12}(\bar{x'})= E_{12}(\bar{x}).
\]
Of course, we also have $E_{21}(\pi x)\in H(\mathfrak{p})$ and 
$\gamma_{\pi,n}(E_{21}(\pi x))=E_{21}(\bar{x})$. This proves the surjectivity statement.

For the second part, suppose that $\pi'\in A$ also satisfies $v_{\mathfrak{p}}(\pi')=1$. 
Then $\pi'=\pi\cdot u$ for some $u\in A_{\mathfrak{p}}^\times$. From the definition, we have 
\[
\gamma_{\pi',n}= f\circ \gamma_{\pi,n}
\] 
where $f$ is conjugation by $M(\bar{u}^{-1})$ on $\spl{2}{A/\mathfrak{p}^n}$. It follows at 
once that $\ker{\gamma_{\pi',n}}=\ker{\gamma_{\pi,n}}$ as claimed.
\end{proof}

We let $\tpcong{A}{\mathfrak{p}^n}$ denote the kernel of the $\gamma_{\pi,n}$ (for any 
choice of $\pi$). Thus, for all $n\geq 1$, there is a short exact sequence 
\[
1\to \tpcong{A}{\mathfrak{p}^n}\to H(\mathfrak{p})\to \spl{2}{A/\mathfrak{p}^n}\to 1.
\]

Note that
\[
\tpcong{A}{\mathfrak{p}^n}=
\left\{ \matr{a}{b}{c}{d}\in H(\mathfrak{p})\ |\ a-1,d-1\in \mathfrak{p}^n, c\in \mathfrak{p}^{n+1},
b\in \mathfrak{p}^{n-1}\right\}.
\]
In particular, for all $n\geq 1$ we have 
\[
\pcong{A}{\mathfrak{p}^{n+1}}\subset \tpcong{A}{\mathfrak{p}^n}\subset 
\tcong{A}{\mathfrak{p}^n}\subset \spl{2}{A}.
\]

For a field $F$, we will use the notation
\[
\bor(F):= \left\{ \matr{a}{b}{0}{a^{-1}}\in \spl{2}{F}\right\}\mbox{ and }
\bor'(F):= \left\{ \matr{a}{0}{c}{a^{-1}}\in \spl{2}{F}\right\}.
\]
Of course, these two subgroups of $\spl{2}{F}$ are naturally isomorphic. 

We will need the following result below.

\begin{lem}\label{lem:bor'}
 There is a natural short exact sequence 
\[
1\to \tpcong{A}{\mathfrak{p}}\to \tcong{A}{\mathfrak{p}}\to \bor'(k(\mathfrak{p}))\to 1.
\]
\end{lem}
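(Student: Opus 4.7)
The plan is to restrict the surjection $\gamma_{\pi,1}:H(\mathfrak{p})\to \spl{2}{k(\mathfrak{p})}$ to $\tcong{A}{\mathfrak{p}}$ and verify three things: that the restriction makes sense, that its image lands in $\bor'(k(\mathfrak{p}))$, and that the restriction is surjective onto $\bor'(k(\mathfrak{p}))$ with kernel exactly $\tpcong{A}{\mathfrak{p}}$.

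The inclusion $\tcong{A}{\mathfrak{p}}\subseteq H(\mathfrak{p})$ is immediate: an element $\matr{a}{b}{c}{d}\in \tcong{A}{\mathfrak{p}}$ satisfies $a,b,d\in A$ and $c\in\mathfrak{p}$, and since $A\subseteq \mathfrak{p}^{-1}$, the defining conditions of $H(\mathfrak{p})$ all hold. Landing in $\bor'(k(\mathfrak{p}))$ follows from the explicit formula recorded in the proof of the preceding lemma,
\[
\gamma_{\pi,1}\left(\matr{a}{b}{c}{d}\right)=\matr{\bar a}{\overline{\pi b}}{\overline{c/\pi}}{\bar d};
\]
the hypothesis $b\in A$ forces $\pi b\in\mathfrak{p}$, so the upper-right entry vanishes and the image is lower triangular (the diagonal entries are automatically mutually inverse since the determinant is $1$). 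The kernel calculation requires no new work: the paper already notes $\tpcong{A}{\mathfrak{p}}\subseteq \tcong{A}{\mathfrak{p}}$, so the kernel of the restriction coincides with $\tpcong{A}{\mathfrak{p}}\cap \tcong{A}{\mathfrak{p}} = \tpcong{A}{\mathfrak{p}}$.

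The only step with real content is surjectivity, which I would establish by lifting generators of $\bor'(k(\mathfrak{p}))$. Since every element of $\bor'(k(\mathfrak{p}))$ factors as $D(\bar a)\cdot E_{21}(\bar e)$, it suffices to lift elementary matrices $E_{21}(\bar x)$ and diagonal matrices $D(\bar a)$ separately. For the elementary matrices, $E_{21}(\pi x)\in \tcong{A}{\mathfrak{p}}$ maps to $E_{21}(\bar x)$. For a diagonal $D(\bar a)$ with $\bar a\in k(\mathfrak{p})^\times$, lift $\bar a$ to some $a\in A\setminus\mathfrak{p}$; since $a$ is a unit modulo $\mathfrak{p}^2$, choose $d\in A$ with $ad\equiv 1\pmod{\mathfrak{p}^2}$ and take
\[
\matr{a}{1}{ad-1}{d}\in \tcong{A}{\mathfrak{p}},
\]
which has determinant $1$ and maps to $D(\bar a)$ because $(ad-1)/\pi\in\tilde{\mathfrak{p}}$. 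I do not anticipate any genuine obstacle here; the content of the lemma is essentially that the twisted reduction $\gamma_{\pi,1}$ is surjective on the Borel-like subgroup one generator at a time.
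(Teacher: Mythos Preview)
Your proposal is correct and follows exactly the approach the paper indicates: restrict $\gamma_{\pi,1}$ to $\tcong{A}{\mathfrak{p}}$ and identify the image as $\bor'(k(\mathfrak{p}))$. The paper's own proof is a single sentence asserting this image computation as immediate, whereas you have spelled out the verification (in particular the surjectivity via explicit lifts of $E_{21}(\bar x)$ and $D(\bar a)$); there is no difference in method, only in level of detail.
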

\begin{proof}
This is immediate from the fact that the image of $\tcong{A}{\mathfrak{p}}$ in 
$\spl{2}{A/\mathfrak{p}}=\spl{2}{k(\mathfrak{p})}$ under the map $\gamma_{\pi,1}$ is precisely 
$\bor'(k(\mathfrak{p}))$. 
\end{proof}
\subsection{The Mayer-Vietoris sequence}
Let $\mathfrak{p}$ be a  nonzero prime ideal of $A$ and 
let $v=v_{\mathfrak{p}}$ be the associated discrete valuation.  We let $k(\mathfrak{p})$
 or $k(v)$ denote  the
residue field  $A/\mathfrak{p}$.   We will further suppose that 
the class of $\mathfrak{p}$ has finite order in $\cl{A}$. Thus $\mathfrak{p}^n=xA$ for some 
$n\geq 1$ and $x\in A$. (This condition is automatically satisfied when $K$ is  a global field.) 

Let
\[
\pcong{A}{\mathfrak{p}}:=\ker{\spl{2}{A}\to \spl{2}{k(\pi)}}=
\left\{ \matr{a}{b}{c}{d}\in\spl{2}{A}\ :\ 1-a,1-d,b,c\in\mathfrak{p}\right\}
\]  
and let
\[
\tcong{A}{\mathfrak{p}}:= \left\{ \matr{a}{b}{c}{d}\in\spl{2}{A}\ :\ c\in\mathfrak{p}\right\}.
\]

We let $\tilde{\mathfrak{p}}$ denote the extension of $\mathfrak{p}$ to the localization 
$A_{\mathfrak{p}}$, which is thus a discrete valuation ring with unique (principal) nonzero prime ideal 
 $\tilde{\mathfrak{p}}$.

The action of $\spl{2}{K}$ on the Serre tree associated to the valuation $v$ 
(\cite[Chapter II]{serre:trees}) 
yields a 
decomposition 
\begin{eqnarray}\label{amal1}
\spl{2}{K}=\spl{2}{A_{\mathfrak{p}}}\star_{\tcong{A_{\mathfrak{p}}}{\tilde{\mathfrak{p}}}}
H(\tilde{\mathfrak{p}})
\end{eqnarray}
of $\spl{2}{K}$ as the sum of $\spl{2}{A_{\mathfrak{p}}}$ and $H(\tilde{\mathfrak{p}})$
amalgamated  along 
their intersection\\
 $\spl{2}{A_{\mathfrak{p}}}\cap H(\tilde{\mathfrak{p}})
= 
\tcong{A_{\mathfrak{p}}}{\tilde{\mathfrak{p}}}$.

Let 
\[
A':=\{ a\in K\ |\ v_{\mathfrak{q}}(a)\geq 0 \mbox{ for all prime ideals }\mathfrak{q}\not=\mathfrak{p}
\}.  
\]
Note that since $\mathfrak{p}^n=xA$ by assumption, $A'=A[1/x]$.

Since $A[1/x]$ is dense in $K$ in the $\mathfrak{p}$-adic topology, and since 
\[
\spl{2}{A[1/x]}\cap\spl{2}{A_{\mathfrak{p}}}=\spl{2}{A},\quad 
\spl{2}{A[1/x]}\cap H(\tilde{\mathfrak{p}})= H(\mathfrak{p}) 
\]
there is also an induced decomposition
\[
\spl{2}{A[1/x]}=\spl{2}{A}\star_{\tcong{A}{\mathfrak{p}}}H(\mathfrak{p}).
\]

For convenience, in the remainder of this section we will set 
\[
G:=\spl{2}{A[1/x]},\  
G_1:= \spl{2}{A},\ G_2:= H(\mathfrak{p})\mbox{ and } \Gamma_0:= \tcong{A}{\mathfrak{p}}.
\]
Thus $G=G_1\star_{\Gamma_0}G_2$ and this 
decomposition gives rise a short exact sequence of $\Z[G]$-modules:
\[
\xymatrix{
0\ar[r]
&\Z[G/\Gamma_0]\ar^-{\alpha}[r]
&\Z[G/G_1]\oplus \Z[G/G_2]\ar^-{\beta}[r]
&\Z\ar[r]
&0.}
\]
where $\alpha$ is the map 
\[
\alpha: \Z[G/\Gamma_0]\to \Z[G/G_1]\oplus \Z[G/G_2],\  g\Gamma_0\mapsto (gG_1,gG_2)
\]
and $\beta$ is the unique $\Z[G]$-homomorphism
\[
\beta: \Z[G/G_1]\oplus \Z[G/G_2]\to \Z,\ (G_1,0)\mapsto -1, (0,G_2)\mapsto 1.
\]

This short exact sequence of $\Z[G]$-modules gives rise to a long exact sequence in 
homology. Combining this with the isomorphisms of Shapiro's lemma,
$\ho{r}{G}{\Z[G/H]}\cong\hoz{r}{H}$, gives us the \emph{Mayer-Vietoris exact sequence}
of the amalgamated product:
\[
\xymatrix{
\cdots \ar^-{\delta}[r]
&\hoz{r}{\Gamma_0}\ar^-{\alpha}[r]
&\hoz{r}{G_1}\oplus\hoz{r}{G_2}\ar^-{\beta}[r]
&\hoz{r}{G}\ar^-{\delta}[r]
&\cdots
}
\]  

The maps $\alpha$ and $\beta$ in this sequence can be described as follows: Let 
$\iota_1:\Gamma_0\to G_1$ and $\iota_2:\Gamma_0\to G_2$ be the natural inclusions. Then 
\[
\alpha(z)=(\iota_1(z),\iota_2(z))\mbox{ for all } z\in \hoz{r}{\Gamma_0}.
\] 
Likewise, let $j_1:G_1\to G$ and $j_2:G_2\to G$ be the natural inclusions. Then 
\[
\beta(z_1,z_2)=j_2(z_2)-j_1(z_1)\mbox{ for all } z_1\in\hoz{r}{G_1}, \ z_1\in\hoz{r}{G_1}.
\]

The amalgamated product decomposition (\ref{amal1}) -- i.e. taking the case $A=A_{\mathfrak{p}}$ -- 
also gives rise to a Mayer-Vietoris sequence
\[
\xymatrix{
\cdots \ar^-{\delta}[r]
&\hoz{r}{\tcong{A_{\mathfrak{p}}}{\tilde{\mathfrak{p}}}}\ar^-{\alpha}[r]
&\hoz{r}{\spl{2}{A_{\mathfrak{p}}}}\oplus\hoz{r}{H(\tilde{\mathfrak{p}})}\ar^-{\beta}[r]
&\hoz{r}{\spl{2}{K}}\ar^-{\delta}[r]
&\cdots
}
\]

\subsection{The connecting homomorphism}
As above, let $\mathfrak{p}$ be a prime ideal of the Dedekind Domain $A$ and let 
\[
\delta:\hoz{2}{\spl{2}{K}}\to \hoz{1}{\tcong{A_{\mathfrak{p}}}{\tilde{\mathfrak{p}}}}
\]
be the connecting homomorphism in the Mayer-Vietoris sequence associated to the decomposition
\[
\spl{2}{K}=\spl{2}{A_{\mathfrak{p}}}\star_{\tcong{A_{\mathfrak{p}}}{\tilde{\mathfrak{p}}}}
H(\tilde{\mathfrak{p}})
=\spl{2}{A_{\mathfrak{p}}}\star_{\tcong{A_{\mathfrak{p}}}{\tilde{\mathfrak{p}}}}\spl{2}{A_{\mathfrak{p}}}^{M(\pi)}.
\]

\begin{prop}\label{prop:delta}
Let $\rho:\tcong{A_{\mathfrak{p}}}{\tilde{\mathfrak{p}}}\to 
k(\tilde{\mathfrak{p}})^\times$ be the (surjective) map
\[
\matr{a}{b}{c}{d}\mapsto a\pmod{\tilde{\mathfrak{p}}}.
\]
Then the composite homomorphism, $\Delta$ say, 
\[
\xymatrix{
K_2(2,K)\cong \hoz{2}{\spl{2}{K}}\ar^-{\delta}[r]
&\hoz{1}{\tcong{A_{\mathfrak{p}}}{\tilde{\mathfrak{p}}}}\ar^-{\rho}[r]
&\hoz{1}{k(\tilde{\mathfrak{p}})^\times}\cong k(\tilde{\mathfrak{p}})^\times 
}
\]
is the map 
\[
c(a,b)\mapsto (-1)^{v(a)v(b)}\frac{b^{v(a)}}{a^{v(b)}}\pmod{\tilde{\mathfrak{p}}}.
\]
\end{prop}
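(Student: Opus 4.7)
My approach is to verify the asserted formula on a generating set of $K_2(2,K)$. First, I check that the right-hand side
\[
\alpha(a,b):=(-1)^{v(a)v(b)}\,b^{v(a)}/a^{v(b)}\pmod{\tilde{\mathfrak{p}}}
\]
respects each of the five Matsumoto symbol relations (Proposition~\ref{prop:mat}); writing $a=u_a\pi^{v(a)},\ b=u_b\pi^{v(b)}$ with $u_a,u_b\in A_{\mathfrak{p}}^\times$, this is a direct calculation in which the key relations to verify are (3) and (5), the latter in the subcase where $u,1-u\notin A_{\mathfrak{p}}^\times$ (so $v(1-u)=v(u)$, and $(u-1)/u\equiv 1\pmod{\tilde{\mathfrak{p}}}$). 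Consequently $\alpha$ descends to a well-defined homomorphism $K_2(2,K)\to k(\tilde{\mathfrak{p}})^\times$, and by the Matsumoto--Moore theorem (Theorem~\ref{thm:mm}) it suffices to verify $\Delta=\alpha$ on the symbols $c(a,b)$.

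The easy case is $a,b\in A_{\mathfrak{p}}^\times$: every Steinberg word in the defining expression for $c(a,b)$ is mapped by $\phi$ into $\spl{2}{A_{\mathfrak{p}}}=G_1$, so $c(a,b)$ lies in the image of $\hoz{2}{\spl{2}{A_{\mathfrak{p}}}}\to\hoz{2}{\spl{2}{K}}$, hence is annihilated by $\delta$ by exactness of Mayer--Vietoris; correspondingly $\alpha(a,b)=1$.

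For the remaining symbols I exploit Corollary~\ref{cor:square}: the class $c(x^2,y)$ is represented by the explicit 2-cycle $\zeta_{x,y}:=\bigl([D(x)|D(y)]-[D(y)|D(x)]\bigr)\otimes 1$. The crucial structural input is the factorization
\[
D(\pi)=w_{12}(-1)\cdot w_{12}(\pi^{-1}),
\]
in which $w_{12}(-1)\in G_1$ while $w_{12}(\pi^{-1})=\matr{0}{\pi^{-1}}{-\pi}{0}\in G_2=H(\tilde{\mathfrak{p}})$ (the latter visibly preserves the lattice $A_{\mathfrak{p}}\oplus\tilde{\mathfrak{p}}$). Using this together with the bar-resolution identity $d_3[A|B|C]=[A|B]\cdot C+[A|BC]-[AB|C]-[B|C]$, I rewrite $\zeta_{x,y}$ modulo $d_3$-boundaries as a signed sum of 2-chains each entirely supported in $G_1$ or in $G_2$. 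Lifting this decomposition through the short exact sequence $0\to\Z[G/\Gamma_0]\to\Z[G/G_1]\oplus\Z[G/G_2]\to\Z\to 0$ then yields an explicit 1-cycle in $\bar F_1(\tcong{A_{\mathfrak{p}}}{\tilde{\mathfrak{p}}})$ representing $\delta(c(x^2,y))$; projecting through $\rho$ recovers $\alpha(x^2,y)$. The passage from $c(x^2,y)$ to the general symbol $c(a,b)$ is then obtained from Lemma~\ref{lem:square} ($c(x^2,y)=c(x,y)c(y,x)^{-1}$), Proposition~\ref{prop:mat}(2), and the other symbol relations, using the already-verified homomorphism property of both $\Delta$ and $\alpha$.

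The main obstacle is the chain-level bookkeeping in the mixed cases: one must carefully track the signs of the bar differential, keep the inserted 3-chains of type $[\cdot|w_{12}(-1)|w_{12}(\pi^{-1})]$ straight, and correctly identify the final 1-cycle's image in $k(\tilde{\mathfrak{p}})^\times$ under $\rho$. The non-trivial sign $(-1)^{v(a)v(b)}$ arises precisely from braiding two $w_{12}(\pi^{-1})$ factors past one another via Corollary~\ref{cor:conjw} when both $v(a)$ and $v(b)$ are nonzero.
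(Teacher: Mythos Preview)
Your core computational idea coincides with the paper's: both use the explicit cycle $[D(x)|D(y)]-[D(y)|D(x)]$ from Corollary~\ref{cor:square} to represent $c(x^2,y)$, and both exploit the fact that $D(\pi)$ factors as a product of a matrix in $G_1=\spl{2}{A_{\mathfrak p}}$ and one in $G_2=H(\tilde{\mathfrak p})$ (the paper phrases this as $w\in G_1$, $wD(x)\in G_2$, which is your factorisation $D(\pi)=w_{12}(-1)\,w_{12}(\pi^{-1})$ read the other way). The unit--unit case is likewise dispatched identically.

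The gap is in your final reduction. Knowing $\Delta$ on all $c(x^2,y)$ and on all $c(u,u')$ with $u,u'\in A_{\mathfrak p}^\times$ does \emph{not} determine $\Delta$ on $K_2(2,K)$, so no manipulation with Lemma~\ref{lem:square} and Proposition~\ref{prop:mat} can carry you from $c(x^2,y)$ to the general $c(a,b)$. Here is an explicit obstruction when the residue characteristic is odd: the surjection $K_2(2,K)\to I^2(K)$, $c(a,b)\mapsto\pf{a}\pf{b}$, is symmetric and annihilates every $c(x^2,y)$ since $\pf{x^2}=0$. Composing with the Hilbert symbol $I^2(K)\to\{\pm 1\}\subset k(\tilde{\mathfrak p})^\times$ at $\mathfrak p$ (which satisfies $(u,u')_{\mathfrak p}=1$ for units $u,u'$) gives a homomorphism $\beta\colon K_2(2,K)\to k(\tilde{\mathfrak p})^\times$ that is trivial on your entire test set yet has $\beta(c(\pi,u))=-1$ whenever $u$ is a non-square unit. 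Thus $\Delta$ and $\alpha$ could a priori differ by such a $\beta$; the symbol relations alone cannot resolve this $\pm 1$ ambiguity, which is exactly the sign $(-1)^{v(a)v(b)}$ you are trying to capture.

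The paper repairs this with an additional idea you are missing. It first invokes Lemma~\ref{lem:k2gen} to reduce to generators $c(x,u)$ with $v(x)=1$ and $v(u)=0$, and then passes to the quadratic extension $K'=K(\sqrt{u})$. Functoriality of the Mayer--Vietoris construction yields a commutative square with an injection $k(v)^\times\hookrightarrow k(v')^\times$ on the right, so $\Delta(c(x,u))$ is determined by $\Delta'(c(x,(\sqrt{u})^2))$ in $K'$, where the explicit-cycle computation applies directly. This field-extension trick is precisely what pins down the sign that your reduction leaves undetermined.
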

\begin{rem} In fact, the isomorphisms in the statement of Proposition \ref{prop:delta} are canonical 
only up to sign. We have made our choices so that the sign is $+1$; but the choice of sign does not 
materially affect our main results. 
\end{rem}

Before proving Proposition \ref{prop:delta}, we require

\begin{lem}\label{lem:k2gen}
Let $K$ be a field with discrete valuation $v$. Then $K_2(2,K)$ is generated by the set 
$C_v:=\{ c(x,u)\ |\ v(u)=0, v(x)=1\}$. 
\end{lem}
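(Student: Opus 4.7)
By the Matsumoto-Moore theorem (Theorem \ref{thm:mm}), $K_2(2,K)$ is generated by all symbols $c(a,b)$ with $a,b\in K^\times$, so it suffices to show that each $c(a,b)$ lies in the subgroup $\mathcal{G}:=\langle C_v\rangle$. The plan is to fix a uniformizer $\pi$ of $v$ and reduce arbitrary symbols to elements of $C_v$ by a sequence of applications of the Matsumoto relations (Proposition \ref{prop:mat}) together with Lemma \ref{lem:square}.

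The crucial first step is to show that $c(\pi,c)\in\mathcal{G}$ for every $c\in K^\times$. Writing $c=\pi^n w$ with $w$ a unit, Lemma \ref{lem:square} gives $c(\pi,\pi^2)=c(\pi,\pi)c(\pi,\pi)^{-1}=1$, which iterates to $c(\pi,\pi^{2k})=1$ for all $k$, and hence $c(\pi,\pi^{2k}w)=c(\pi,\pi^{2k})c(\pi,w)=c(\pi,w)\in C_v$. For odd exponents, relation (4) gives $c(\pi,\pi^{2k+1}w)=c(\pi,-\pi^{2k+2}w)=c(\pi,\pi^{2k+2})c(\pi,-w)=c(\pi,-w)\in C_v$. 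The next step is to show $c(\pi^{-1},c)\in\mathcal{G}$: applying (3) to $(\pi^{-1},\pi,u)$ together with (4) on the left-hand side yields $c(\pi^{-1},-u)=c(\pi^{-1},\pi)c(\pi,u)^{-1}$, while a short chain using (4), (2) and the identity $c(x,-x)=1$ (itself a consequence of (4) and (1)) shows $c(\pi^{-1},\pi)=c(\pi,-1)\in C_v$. Combined with the first step this gives $c(\pi^{-1},c)\in\mathcal{G}$ for every $c\in K^\times$.

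With $c(\pi^{\pm 1},c)\in\mathcal{G}$ established, I would bootstrap to $c(\pi^m,c)\in\mathcal{G}$ for all $m\in\Z$ by induction on $|m|$, using (3) with $(u_3,v_3,w_3)=(\pi,\pi^{m-1},c)$ when $m\geq 2$ and with $(\pi^{-1},\pi^{m+1},c)$ when $m\leq -2$; in each case the defect terms are of the form $c(\pi^{\pm 1},*)$, already known to lie in $\mathcal{G}$. Finally, for a general symbol $c(a,b)$ with $a=\pi^m u_1$ and $b=\pi^n u_2$, relation (3) applied to $(\pi^m,u_1,b)$ expresses $c(a,b)$ as the product $c(\pi^m,u_1b)\cdot c(u_1,b)\cdot c(\pi^m,u_1)^{-1}$; the outer two factors already lie in $\mathcal{G}$, and the middle factor $c(u_1,b)$ is treated by a further application of (2) and (3), expressing it as a product of symbols $c(\pi^{-n},*)$ and $c(u,u')$ with $u,u'$ units, again in $\mathcal{G}$.

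The main obstacle throughout is that relation (3) is not bilinear: when one rewrites $c(uv,w)$ using (3) the correction factor $c(u,v)$ has no smaller valuation profile than the original symbol, so a naive induction on $|v(a)|+|v(b)|$ does not close. What unlocks the argument is the interaction between Lemma \ref{lem:square}, which encodes a genuine bimultiplicativity of $c$ when one argument factors through a square, and relation (4), which allows one to shift powers of $\pi$ in the second argument modulo $2$ at the cost of only a sign; together these reduce the infinitely many exponents of $\pi$ to the two residues mod $2$, each of which falls into $C_v$ directly.
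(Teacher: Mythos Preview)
Your argument is correct in outline and takes a genuinely different route from the paper. The paper reduces to $v(a),v(b)\geq 0$ via relation (2) and then runs a direct induction on $v(a)+v(b)$, using only the Matsumoto relations (Proposition~\ref{prop:mat}): the base case $v(a)=v(b)=0$ is handled by the cocycle relation (3) applied to $(\pi,a,b)$, and the inductive step peels off a factor of $b$ (or $a$) using (4) to drop the total valuation. Your approach instead first pins down $c(\pi,c)$ for all $c$ using the square-bilinearity of Lemma~\ref{lem:square}, then bootstraps to $c(\pi^{\pm1},c)$, $c(\pi^m,c)$, and finally general symbols. The paper's argument is shorter and needs no appeal to Lemma~\ref{lem:square}; yours makes the role of ``reduction modulo squares'' more visible and gives an explicit normal form for each symbol in terms of $C_v$.

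One step in your write-up needs to be made explicit: at the very end you reduce to a symbol $c(u,u')$ with $u,u'$ both units and assert this lies in $\mathcal{G}$, but nothing you have proved so far covers that case (your Steps 1--3 only treat symbols with a pure power of $\pi$ in the first slot). The fix is immediate: apply relation (3) to the triple $(\pi,u,u')$ to obtain $c(u,u')=c(\pi,uu')^{-1}c(\pi u,u')\,c(\pi,u)$, and all three factors lie in $C_v$. This is exactly the paper's base case, so you should state it rather than leave it implicit. Similarly, the phrase ``combined with the first step'' in your Step~2 should be read as ``repeat the technique of Step~1 with $\pi^{-1}$ in place of $\pi$,'' which works because you have just shown $c(\pi^{-1},u)\in\mathcal{G}$ for every unit $u$; saying so explicitly would make the argument self-contained.
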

\begin{proof} Let $D$ be the subgroup of $K_2(2,K)$ generated by $C_v$. Let $a,b\in K^\times$. 
We must prove that $c(a,b)\in D$. 

Since 
\[
c(a,b)=c(b^{-1},a)=c(a^{-1},b^{-1})=c(b,a^{-1})
\]
we can assume that $v(a),v(b)\geq 0$. 

We will prove the result by induction on $n=v(a)+v(b)\geq 0$.

If $n=0$, then $v(a)=v(b)=0$ and choosing $\pi\in K^\times$ with $v(\pi)=1$ we have 
\[
c(a,b)= c(\pi a,b)^{-1}c(\pi a,b)c(\pi,a)\in D. 
\]

On the other hand, suppose that $v(a),v(b)>0$. If $0<v(b)\leq v(a)$ then $a=bc$ with 
$0\leq v(c)<v(a)$ and hence 
\[
c(a,b)=c(bc,b)=c(-c,b)\in D
\]
by the inductive hypothesis. An analogous argument applies to the case $0<v(a)<v(b)$. 

Since $c(a,b)=c(b^{-1},a)$, we can reduce to the case  where $v(b)=0$ and $v(a)\geq 2$. 
Then let $a=a'\pi$ where $v(\pi)=1$ and $1\leq v(a')<v(a)$. We have 
\[
c(a,b)=c(a'\pi,b)= c(a',\pi b)c(\pi,b)c(a',\pi)^{-1}
\]
which lies in $D$ by the induction hypothesis (using the argument for the case 
$v(a),v(b)>0$ for the first term). 
\end{proof}

\begin{proof}[Proof of Proposition \ref{prop:delta}]
By Lemma \ref{lem:k2gen}, we must prove that 
\[
\Delta(c(x,u))= u\pmod{\tilde{\mathfrak{p}}}
\]
whenever $v(u)=0$, $v(x)=1$.

We note that it is enough to prove that $\Delta(c(x,u^2))=u^2\pmod{\tilde{\mathfrak{p}}}$ whenever  
$v(u)=0$, $v(x)=1$. For if $u\in K$ is not a square, choose an extension $v'$ of $v$ to 
$K':=K(\root {} \of {u})$. Then there is a natural map of Mayer Vietoris exact sequences 
inducing a commutative square
\[
\xymatrix{
\hoz{2}{\spl{2}{K}}\ar^-{\delta}[r]\ar[d]
&k(v)^\times\ar@{(->}^-{i}[d]\\
\hoz{2}{\spl{2}{K'}}\ar^-{\delta'}[r]
&k(v')^\times
}
\]
so that $i(\Delta(c(x,u))=\Delta'(c(x,u))=\bar{u}\in k(v')^\times$ since $u$ is a square in $K'$, 
and thus $\Delta(c(x,u))=\bar{u}\in k(v)^\times$.

Now, by Corollary \ref{cor:square},
 the symbol $c(x,u^2)\in K(2,K)$ corresponds to the homology class represented 
by the cycle
\[
Z:=\left([D(x)|D(u)]-[D(u)|D(x)]\right)\otimes 1 \in {F}_2(G)\otimes_{\Z[G]} \Z
\]
where $G=\spl{2}{K}$. 

Recall that the Mayer-Vietoris sequence is the long exact homology sequence derived from
the short exact sequence of complexes
\[
0\to F_\bullet(G)\otimes_{\Z[G]}\Z[G/\Gamma_0]\to
F_\bullet(G)\otimes_{\Z[G]}\left(\Z[G/G_1]\oplus\Z[G/G_2]\right)\to 
F_\bullet(G)\otimes_{\Z[G]}\Z\to 0.
\]

Now the cycle $Z$ lifts to 
\[
\left([D(x)|D(u)]-[D(u)|D(x)]\right)\otimes (1\cdot G_1,0)\in F_2(G)\otimes 
\left(\Z[G/G_1]\oplus\Z[G/G_2]\right).
\]

Under the boundary map $d_2$, this is sent to 
\[
[D(u)]\otimes (D(x)\cdot G_1-1\cdot G_1,0)\in
F_1(G)\otimes 
\left(\Z[G/G_1]\oplus\Z[G/G_2]\right)
\]
since $D(u)\in \Gamma_0\subset G_1$. 

Now let 
\[
w:=\matr{0}{1}{-1}{0}\in G_1.
\]
Then
\[
w\cdot D(x)=w^{M(x)}\in G_2.
\]
Thus $(D(x)\cdot G_1-1\cdot G_1,0)$ is the image of 
\[
w^{-1}\cdot (w^{M(x)}\Gamma_0-\Gamma_0)=D(x)\Gamma_0-w^{-1}\Gamma_0
\]
under the map 
$\alpha:\Z[G/\Gamma_0]\to \Z[G/G_1]\oplus\Z[G/G_2]$. 

Thus the homology class $\delta(Z)\in \hoz{1}{\Gamma_0}$ is represented by the cycle
\[
[D(u)]\otimes (D(x)\Gamma_0-w^{-1}\Gamma_0)
=\left([D(u)]D(x)-[D(u)]w^{-1}\right)\otimes  \Gamma_0 \in
F_1(G)\otimes_{\Z[G]}\Z[G/\Gamma_0]. 
\] 

This, in turn, is the image of 
\[
\left([D(u)]D(x)-[D(u)]w^{-1}\right)\otimes 1 \in
F_1(G)\otimes_{\Z[\Gamma_0]}\Z
\]
under the natural isomorphism 
\[
F_\bullet(G)\otimes_{\Z[\Gamma_0]}\Z\cong F_\bullet(G)\otimes_{\Z[G]}\Z[G/\Gamma_0].
\]

For a group $H$ we let $C_\bullet(H)$ denote the right homogeneous resolution of $H$. The isomorphism 
$F_\bullet(H)\to C_\bullet(H)$ of complexes of right $\Z[H]$-modules is given by 
\[
[h_n|\cdots |h_1]\mapsto (h_n\cdot h_{n-1}\cdots h_1,\ldots, h_1,1).
\]

Thus  the cycle $\left([D(u)]D(x)-[D(u)]w^{-1}\right)\otimes 1 \in
F_1(G)\otimes_{\Z[\Gamma_0]}\Z$ corresponds to the cycle
\[
\left((D(ux),D(u))-(D(u)w^{-1},w^{-1})\right)\otimes 1\in C_1(G)\otimes_{\Z[\Gamma_0]}\Z.
\]

To construct an augmentation-preserving map of $\Z[\Gamma_0]$-resolutions from 
$C_\bullet(G)$ to $C_\bullet(\Gamma_0)$, we choose any set-theoretic section $s:G/\Gamma_0\to G$ 
of the natural surjection $G\to G/\Gamma_0, g\mapsto g\Gamma_0$. For $g\in G$ we let 
$\bar{g}:=s(g\Gamma_0)^{-1}g\in \Gamma_0$.  Then the 
map 
\[
\tau:C_\bullet(G)\to C_\bullet(\Gamma_0), (g_n,\ldots,g_0)\mapsto (\bar{g}_n,\ldots,\bar{g}_0)
\]
is an augmentation preserving map of $\Z[\Gamma_0]$-complexes. 

We further specify that we the section $s$ satisfies 
\[
s(D(u)w^{-1}\Gamma_0)=w^{-1}\mbox{ and } s(D(x)\Gamma_0)=D(x)
\]
for all $u$ with $v(u)=0$. Then 
\[
\tau\left((D(ux),D(u))-(D(u)w^{-1},w^{-1})\right)=(D(u),1)-(D(u^{-1}),1)\in C_1(\Gamma_0)
\]
since $wD(u)w^{-1}=D(u^{-1})$ in $G$.

Finally, the homology class 
\[
\left((D(u),1)-(D(u^{-1}),1)\right)\otimes 1 \in C_1(\Gamma_0)\otimes_{\Z[\Gamma_0]}\Z
\] 
corresponds to the element 
\[
D(u)\cdot D(u^{-1})^{-1}=D(u^2)\in \Gamma_0/[\Gamma_0,\Gamma_0]
\]
under the isomorphism $\hoz{1}{\Gamma_0}\cong  \Gamma_0/[\Gamma_0,\Gamma_0]$, and hence maps to 
$u^2\pmod{\tilde{\mathfrak{p}}}\in k(\tilde{\mathfrak{p}})^\times$  under the map $\rho$. 
\end{proof}
\subsection{The abelianization of some congruence subgroups}
\begin{prop}\label{prop:tcong} Let $A$ be a ring of $S$-integers in a global field $K$. 
Suppose that 
$|S|\geq 2$ and that there exists $\lambda\in A^\times$ such that $\lambda^2-1\in A^\times$ also. 
Let 
$\mathfrak{p}$ be a nonzero prime ideal of $A$. 
  
Then the  map $\rho:\tcong{A}{\mathfrak{p}}\to k(\mathfrak{p})^\times$ induces an isomorphism
\[
\hoz{1}{\tcong{A}{\mathfrak{p}}}\cong k(\mathfrak{p})^\times.
\]
\end{prop}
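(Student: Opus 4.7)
The plan is to show that $\rho$ is surjective and that its kernel coincides with the commutator subgroup $[\tcong{A}{\mathfrak{p}},\tcong{A}{\mathfrak{p}}]$; since $k(\mathfrak{p})^\times$ is abelian, this will give the required isomorphism at the level of $H_1$. Surjectivity is straightforward: given $\bar a\in k(\mathfrak{p})^\times$, choose any lift $a\in A\setminus\mathfrak{p}$; since $\mathfrak{p}$ is maximal we can find $d\in A$ with $ad-1\in\mathfrak{p}$, say $ad-1=c$, and then $\matr{a}{1}{c}{d}\in\tcong{A}{\mathfrak{p}}$ maps to $\bar a$.

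Next I would identify the kernel $N$ of $\rho$ explicitly. A matrix $\matr{a}{b}{c}{d}\in\spl{2}{A}$ lies in $N$ precisely when $a-1,c\in\mathfrak{p}$; the condition $d-1\in\mathfrak{p}$ then follows automatically from $ad-bc=1$ together with $c\in\mathfrak{p}$. In the notation of Theorem \ref{thm:vl} this is exactly the group $\tilde\Gamma(A,\mathfrak{p})$ (take $I_1=A$, $I_2=\mathfrak{p}$, so $I_1I_2=\mathfrak{p}$). Since $|S|\geq 2$, Vaserstein-Liehl then yields that $N$ is generated by the elementary matrices $E_{12}(x)$ for $x\in A$ together with $E_{21}(y)$ for $y\in\mathfrak{p}$. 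One containment is immediate: because $\tcong{A}{\mathfrak{p}}/N\cong k(\mathfrak{p})^\times$ is abelian, $[\tcong{A}{\mathfrak{p}},\tcong{A}{\mathfrak{p}}]\subset N$.

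The remaining step is to show that every such elementary generator is already a commutator inside $\tcong{A}{\mathfrak{p}}$. For the hypothesized unit $\lambda$ with $\lambda^2-1\in A^\times$, the diagonal matrix $D(\lambda)=\matr{\lambda}{0}{0}{\lambda^{-1}}$ lies in $\tcong{A}{\mathfrak{p}}$, and the calculation already recorded in the proof of Proposition \ref{prop:e2} gives
\[
[D(\lambda),E_{12}(x)]=E_{12}((\lambda^2-1)x), \qquad [D(\lambda),E_{21}(y)]=E_{21}((\lambda^{-2}-1)y).
\]
Since $\lambda^2-1$ (and hence also $\lambda^{-2}-1$) is a unit of $A$, substituting $x=(\lambda^2-1)^{-1}x'$ and $y=(\lambda^{-2}-1)^{-1}y'$ expresses every $E_{12}(x')$ with $x'\in A$ and every $E_{21}(y')$ with $y'\in\mathfrak{p}$ as a commutator in $\tcong{A}{\mathfrak{p}}$. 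Combined with the Vaserstein-Liehl generation statement this yields $N\subset[\tcong{A}{\mathfrak{p}},\tcong{A}{\mathfrak{p}}]$.

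The main obstacle is the identification of $N$ as a group to which Vaserstein-Liehl applies; this is what ultimately accounts for the hypothesis $|S|\geq 2$. The commutator identity in the last step is essentially the one used in Proposition \ref{prop:e2}(2), and the hypothesis that $\lambda^2-1$ be a unit is exactly what is needed to make the argument go through uniformly for every elementary generator.
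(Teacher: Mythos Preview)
Your proof is correct and follows essentially the same route as the paper: identify $\ker\rho$ with $\tilde\Gamma(A,\mathfrak{p})$, invoke Vaserstein--Liehl to generate it by elementary matrices, and then use the commutator identity with $D(\lambda)$ to show those generators lie in $[\tcong{A}{\mathfrak{p}},\tcong{A}{\mathfrak{p}}]$. Your explicit surjectivity argument and the care you take with the $E_{21}$ case (using $\lambda^{-2}-1$ rather than $\lambda^2-1$) are minor additions, not departures.
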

\begin{proof}
The map $\rho$ induces a short exact sequence
\[
\xymatrix{
1\ar[r]
&\ucong{A}{\mathfrak{p}}\ar[r]
& \tcong{A}{\mathfrak{p}}\ar^-{\rho}[r]
&k(\mathfrak{p})^\times\ar[r]
&1
}
\]
where
\[
\ucong{A}{\mathfrak{p}}:=\left\{ \matr{a}{b}{c}{d}\in \spl{2}{A}\ |\ 
\c,a-1,d-1 \in \mathfrak{p}\right\}=\tilde{\Gamma}(A,\mathfrak{p})
\]
in the notation of Theorem \ref{thm:vl}. 

Since $k(\mathfrak{p})^\times$ is an abelian group, it follows that 
\[
[\tcong{A}{\mathfrak{p}},\tcong{A}{\mathfrak{p}}]\subset \ucong{A}{\mathfrak{p}}
\]

On the other hand, by Theorem \ref{thm:vl},  $\ucong{A}{\mathfrak{p}}=
\tilde{\Gamma}(A,\mathfrak{p})$ 
is generated by elementary matrices $E_{12}(x), x\in A$, $E_{21}(y), y\in \mathfrak{p}$. 
However,
\[
E_{12}(x)=[D(\lambda), E_{12}(x/(\lambda^2-1))], E_{21}(y)=[D(\lambda), E_{21}(y/(\lambda^2-1))\in 
[\tcong{A}{\mathfrak{p}},\tcong{A}{\mathfrak{p}}].
\]  

So $[\tcong{A}{\mathfrak{p}},\tcong{A}{\mathfrak{p}}]=\ucong{A}{\mathfrak{p}}$ as required. 
\end{proof}

For a field $k$  we let $\lsl{2}{k}$ denote the $3$-dimensional vector space of $2\times 2$ 
trace zero matrices. 

\begin{lem}\label{lem:congindex}
 Let $A$ be a Dedekind domain and let $\mathfrak{p}$ be a maximal ideal. Then 
for any $m\geq 1$ there are natural isomorphisms of groups 
\[
\frac{\tpcong{A}{\mathfrak{p}^n}}{\tpcong{A}{\mathfrak{p}^{n+1}}}\cong
\lsl{2}{k(\mathfrak{p})}\cong \frac{\pcong{A}{\mathfrak{p}^n}}{\pcong{A}{\mathfrak{p}^{n+1}}}
\]
\end{lem}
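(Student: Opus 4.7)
The plan is to construct, for each of the two congruence subgroups, an injective group homomorphism to $\lsl{2}{k(\mathfrak{p})}$ and then verify surjectivity. The common theme is: if $M = I + N$ with $N$ having entries in various powers of $\mathfrak{p}$, then modulo one more power of $\mathfrak{p}$ at each position the quadratic correction $NN'$ vanishes, so multiplication becomes addition; the relation $\det M = 1$ degenerates to vanishing of the trace of $\bar{N}$.

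For $\pcong{A}{\mathfrak{p}^n}$, I would write $M = I + N$ with $N$ an integer matrix having entries in $\mathfrak{p}^n$, and define $\phi(M)$ to be the reduction of $N$ modulo $\mathfrak{p}^{n+1}$. After fixing a uniformizer $\pi$ of $A_{\mathfrak{p}}$ to identify each $\mathfrak{p}^k/\mathfrak{p}^{k+1}$ with $k(\mathfrak{p})$, $\phi$ takes values in the $2\times 2$ matrices over $k(\mathfrak{p})$. For $n\geq 1$ the inclusion $\mathfrak{p}^{2n}\subset \mathfrak{p}^{n+1}$ forces the correction term in $(I+N)(I+N') = I + N + N' + NN'$ to vanish modulo $\mathfrak{p}^{n+1}$, so $\phi$ is a homomorphism; expanding $1 = \det M = 1 + \mathrm{tr}(N) + \det N$ yields $\mathrm{tr}(N) = -\det N \in \mathfrak{p}^{2n}\subset\mathfrak{p}^{n+1}$, so the image lies in $\lsl{2}{k(\mathfrak{p})}$, with kernel manifestly equal to $\pcong{A}{\mathfrak{p}^{n+1}}$.

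For $\tpcong{A}{\mathfrak{p}^n}$, writing $M = I + \matr{a}{b}{c}{d}$ with $a,d\in \mathfrak{p}^n$, $b\in\mathfrak{p}^{n-1}$, $c\in\mathfrak{p}^{n+1}$, I would define $\tilde\phi(M)=\matr{\bar{a}}{\bar{b}}{\bar{c}}{\bar{d}}$, reading each entry in its respective quotient $\mathfrak{p}^k/\mathfrak{p}^{k+1}$ identified with $k(\mathfrak{p})$ via a power of $\pi$. A direct entry-by-entry check of $MM'$ shows that each of the quadratic cross terms $aa'$, $bc'$, $ab'$, $bd'$, $ca'$, $dc'$, $cb'$, $dd'$ lies in the filtration step needed to vanish in the relevant quotient, using $n\geq 1$. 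The determinant relation gives $ad+bc \in \mathfrak{p}^{n+1}$, forcing $\bar{a} + \bar{d} = 0$, so $\tilde\phi$ lands in $\lsl{2}{k(\mathfrak{p})}$ with kernel $\tpcong{A}{\mathfrak{p}^{n+1}}$.

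The main obstacle is surjectivity, since one must realise arbitrary trace-zero matrices over $k(\mathfrak{p})$ by honest elements of $\spl{2}{A}$ with determinant exactly $1$. For the principal case I would exploit that $A/\mathfrak{p}^{n+1}$ is a local ring, so $\spl{2}{A/\mathfrak{p}^{n+1}}$ is generated by elementary matrices and the reduction $\spl{2}{A} \to \spl{2}{A/\mathfrak{p}^{n+1}}$ is surjective; consequently $\pcong{A}{\mathfrak{p}^n}/\pcong{A}{\mathfrak{p}^{n+1}}$ is identified with the kernel of $\spl{2}{A/\mathfrak{p}^{n+1}} \to \spl{2}{A/\mathfrak{p}^n}$, and because $\mathfrak{p}^n/\mathfrak{p}^{n+1}$ is a square-zero ideal this kernel is exactly $\lsl{2}{k(\mathfrak{p})}$. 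For the $\tpcong$-case I would reduce to the principal case by noting that conjugation by $M(\pi)\in \gl{2}{A_{\mathfrak{p}}}$ gives $M(\pi)^{-1}\pcong{A_{\mathfrak{p}}}{\tilde{\mathfrak{p}}^n}M(\pi) = \tpcong{A_{\mathfrak{p}}}{\tilde{\mathfrak{p}}^n}$, which immediately furnishes the $A_{\mathfrak{p}}$-version of the isomorphism, and then observing that the inclusion $\tpcong{A}{\mathfrak{p}^n}\hookrightarrow \tpcong{A_{\mathfrak{p}}}{\tilde{\mathfrak{p}}^n}$ induces an isomorphism on $\mathfrak{p}^{n+1}$-quotients via $A/\mathfrak{p}^m \cong A_{\mathfrak{p}}/\tilde{\mathfrak{p}}^m$ for all $m$.
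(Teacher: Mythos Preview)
Your direct construction of $\phi$ and $\tilde\phi$, with the entry-by-entry verification that quadratic cross terms vanish and that $\det=1$ forces trace zero, is correct and is exactly the content of the paper's second step (the paper writes the inverse map $\lsl{2}{k(\mathfrak{p})}\to\ker(\spl{2}{A/\mathfrak{p}^{n+1}}\to\spl{2}{A/\mathfrak{p}^n})$, $\bar N\mapsto I+\pi^n N$, but the computation is the same). For the $\pcong{A}{\mathfrak{p}^n}$ case your surjectivity argument via the identification with the kernel of $\spl{2}{A/\mathfrak{p}^{n+1}}\to\spl{2}{A/\mathfrak{p}^{n}}$ is precisely the paper's argument.

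The one genuine gap is in your $\tpcong{A}{\mathfrak{p}^n}$ surjectivity step. The isomorphism $A/\mathfrak{p}^m\cong A_{\mathfrak{p}}/\tilde{\mathfrak{p}}^m$ alone does \emph{not} show that the inclusion $\tpcong{A}{\mathfrak{p}^n}\hookrightarrow\tpcong{A_{\mathfrak{p}}}{\tilde{\mathfrak{p}}^n}$ is surjective on quotients: given $M'\in\tpcong{A_{\mathfrak{p}}}{\tilde{\mathfrak{p}}^n}$ you can certainly approximate each entry of $M'-I$ by an element of the appropriate $\mathfrak{p}^k$, but the resulting matrix over $A$ need not have determinant $1$ (e.g.\ $1+a$ is a unit in $A_{\mathfrak{p}}$ but typically not in $A$, so you cannot solve for $d$). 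What is actually needed here is the surjectivity of $\gamma_{\pi,n+1}:H(\mathfrak{p})\to\spl{2}{A/\mathfrak{p}^{n+1}}$, established just before this lemma via explicit lifts of elementary matrices. Once you invoke that, the detour through $A_{\mathfrak{p}}$ becomes unnecessary: you get directly
\[
\frac{\tpcong{A}{\mathfrak{p}^n}}{\tpcong{A}{\mathfrak{p}^{n+1}}}\;\cong\;\ker\bigl(\spl{2}{A/\mathfrak{p}^{n+1}}\to\spl{2}{A/\mathfrak{p}^{n}}\bigr),
\]
which is how the paper treats both cases uniformly in one line.
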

\begin{proof}
From the definitions of $\pcong{A}{\mathfrak{p}^n}$ and $\tpcong{A}{\mathfrak{p}^n}$
we have
\[
\frac{\tpcong{A}{\mathfrak{p}^n}}{\tpcong{A}{\mathfrak{p}^{n+1}}}\cong 
\ker{\spl{2}{A/\mathfrak{p}^{n+1}}\to \spl{2}{A/\mathfrak{p}^{n}}}\cong 
\frac{\pcong{A}{\mathfrak{p}^n}}{\pcong{A}{\mathfrak{p}^{n+1}}}.
\]

Let $\pi\in \mathfrak{p}\setminus\mathfrak{p}^2$. For any $n\geq 1$, the group 
$\mathfrak{p}^n/\mathfrak{p}^{n+1}$ is a $1$-dimensional 
$k(\mathfrak{p})$-vector spaces with basis $\{ \pi^n+\mathfrak{p}^{n+1}\}$. 

The required isomorphism 
\[
\lsl{2}{k(\mathfrak{p})}\to
\ker{\spl{2}{A/\mathfrak{p}^{n+1}}\to \spl{2}{A/\mathfrak{p}^{n}}}
\]
is then the  map 
\[
\matr{\bar{a}}{\bar{b}}{\bar{c}}{\bar{d}}\mapsto \matr{1+a\pi^n}{b\pi^n}{c\pi^n}{1+d\pi^n}
\]
where $a,b,c,d\in A$ map to $\bar{a},\bar{b},\bar{c},\bar{d}\in k(\mathfrak{p})$.  
\end{proof}
\begin{cor}\label{cor:congindex}
 Let $A$ be a Dedekind domain and let $\mathfrak{p}$ be a maximal ideal. Suppose that 
$k(\mathfrak{p})$ is a finite field  with $q$ elements. Then 
\[
[\tpcong{A}{\mathfrak{p}}:\tpcong{A}{\mathfrak{p}^n}]=q^{3(n-1)}=
[\pcong{A}{\mathfrak{p}}:\pcong{A}{\mathfrak{p}^n}]
\]
for all $n\geq 1$. 
\end{cor}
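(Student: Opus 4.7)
The plan is to derive the corollary directly from Lemma \ref{lem:congindex} by a short telescoping argument, using the filtration
\[
\tpcong{A}{\mathfrak{p}}\supset \tpcong{A}{\mathfrak{p}^2}\supset\cdots\supset\tpcong{A}{\mathfrak{p}^n}
\]
and the analogous filtration with $\pcong{A}{\mathfrak{p}^k}$ in place of $\tpcong{A}{\mathfrak{p}^k}$.

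First I would observe that for any tower of subgroups $H_0\supset H_1\supset\cdots\supset H_{n-1}$ with $[H_k:H_{k+1}]$ finite, one has the standard index identity
\[
[H_0:H_{n-1}] = \prod_{k=0}^{n-2}[H_k:H_{k+1}].
\]
Apply this with $H_k = \tpcong{A}{\mathfrak{p}^{k+1}}$ for $k=0,\ldots,n-1$. By Lemma \ref{lem:congindex}, each successive quotient is isomorphic to $\lsl{2}{k(\mathfrak{p})}$, which, since $k(\mathfrak{p})$ has $q$ elements, is a $3$-dimensional $k(\mathfrak{p})$-vector space of order $q^3$. Hence each index equals $q^3$, and the product over $n-1$ factors gives
\[
[\tpcong{A}{\mathfrak{p}}:\tpcong{A}{\mathfrak{p}^n}]=\bigl(q^3\bigr)^{n-1}=q^{3(n-1)}.
\]
The identical argument applied to the filtration $\pcong{A}{\mathfrak{p}^{k+1}}$ yields the equality with $\pcong{A}{\mathfrak{p}}$ in place of $\tpcong{A}{\mathfrak{p}}$, again using Lemma \ref{lem:congindex}.

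There is essentially no obstacle here; the statement is a pure consequence of multiplicativity of indices combined with the successive-quotient computation already carried out in Lemma \ref{lem:congindex}. The only thing to note is that the finiteness of $k(\mathfrak{p})$ makes all the intermediate indices finite, so the telescoping is justified, and one should handle the trivial case $n=1$ separately (where both sides equal $1$, the empty product).
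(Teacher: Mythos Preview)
Your argument is correct and is exactly the intended derivation: the paper gives no separate proof of the corollary because it follows immediately from Lemma \ref{lem:congindex} by multiplying the indices of the successive quotients, each of order $q^3$, along the filtration you wrote down.
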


\begin{lem}\label{lem:crt}
 Suppose that $I$ and $J$ are comaximal ideals in $A$; i.e. $I+J=A$. Then 
the composite map $\pcong{A}{I}\to \spl{2}{A}\to \spl{2}{A/J}$ is surjective.
\end{lem}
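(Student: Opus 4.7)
The strategy is to reduce the problem to lifting elementary matrices one at a time, since these are especially well-suited to both the kernel condition at $I$ and the surjectivity condition modulo $J$.

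First, I would observe that $A/J$ is a finite product of local Artinian rings, by applying the Chinese Remainder Theorem to the prime decomposition $J=\prod \mathfrak{p}_i^{n_i}$. For any local ring $R$ and any $M\in \spl{2}{R}$, the relation $\det(M)=1$ forces at least one entry of $M$ to be a unit (otherwise $\det(M)$ would lie in the maximal ideal); standard row and column operations then reduce $M$ to the identity. It follows that each factor $\spl{2}{A/\mathfrak{p}_i^{n_i}}$, and hence $\spl{2}{A/J}$ itself, is generated by the elementary matrices $E_{12}(\bar{x})$ and $E_{21}(\bar{y})$ with $\bar{x},\bar{y}\in A/J$.

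Next, the hypothesis $I+J=A$ implies that the restriction $I\hookrightarrow A\twoheadrightarrow A/J$ is surjective: writing any $a\in A$ as $i+j$ with $i\in I$, $j\in J$ gives $a+J=i+J$. Hence, given $\bar{x}\in A/J$, I can choose a lift $\tilde{x}\in I$ with $\tilde{x}+J=\bar{x}$. The matrix $E_{12}(\tilde{x})$ lies in $\pcong{A}{I}$ (its diagonal entries equal $1$ and its off-diagonal entries belong to $I$) and manifestly maps to $E_{12}(\bar{x})$ in $\spl{2}{A/J}$; the same recipe handles $E_{21}(\bar{y})$.

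Putting these pieces together, any element of $\spl{2}{A/J}$ is a product of elementary matrices, each of which lifts to an element of $\pcong{A}{I}$; since $\pcong{A}{I}$ is a subgroup of $\spl{2}{A}$, the corresponding product of lifts remains in $\pcong{A}{I}$ and maps to the original element. The only mildly delicate step is generation of $\spl{2}{A/J}$ by elementary matrices, but because $A/J$ is semilocal this is an elementary classical fact, requiring neither the Vaserstein--Liehl theorem nor any hypothesis on the unit group of $A$.
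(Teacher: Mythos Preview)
Your argument is correct. The paper's proof is organized slightly differently: it uses the Chinese Remainder Theorem to identify $A/IJ\cong(A/I)\times(A/J)$, hence $\spl{2}{A/IJ}\cong\spl{2}{A/I}\times\spl{2}{A/J}$, and then concludes that $\spl{2}{A}\to\spl{2}{A/I}\times\spl{2}{A/J}$ is surjective (implicitly using that $\spl{2}{A}\to\spl{2}{A/IJ}$ is onto, which again relies on elementary generation over the semilocal ring $A/IJ$); the lemma follows by taking preimages of pairs $(1,X)$. So both proofs rest on the same two ingredients---elementary generation of $\mathrm{SL}_2$ over a semilocal quotient and some version of CRT---but you apply them more directly: you only need the surjection $I\twoheadrightarrow A/J$ and lift each elementary generator straight into $\pcong{A}{I}$, bypassing the product decomposition. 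Your route is a little more hands-on; the paper's route yields the marginally stronger statement that $\spl{2}{A}$ surjects onto the full product $\spl{2}{A/I}\times\spl{2}{A/J}$.
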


\begin{proof}
By the Chinese Remainder Theorem the map $A/IJ\to (A/I)\times (A/J)$, $a\mapsto  (a+I,a+J)$ is 
 an isomorphism of rings. It follows that the map
\[
\spl{2}{A/IJ}\to \spl{2}{A/I}\times \spl{2}{A/J}, \quad X\mod{IJ}\mapsto (X\mod{I},X\mod{J})
\]
is an isomorphism of groups and hence that 
\[
\spl{2}{A}\to \spl{2}{A/I}\times \spl{2}{A/J}, \quad X\mapsto (X\mod{I},X\mod{J})
\]
is a surjective group homomorphism. This implies the statement of the Lemma.
\end{proof}

\begin{lem}\label{lem:index} Suppose that $k(\mathfrak{p})$ is a finite field 
with  $q$ elements. We have 
\[
[\spl{2}{A}:\pcong{A}{\mathfrak{p}}]=q(q^2-1)=
[\spl{2}{A}:\tpcong{A}{\mathfrak{p}}]
\]
\end{lem}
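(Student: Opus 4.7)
The plan is to prove both equalities by direct index counting, treating them in sequence.

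For the first equality, I would consider the reduction-mod-$\mathfrak{p}$ homomorphism $r:\spl{2}{A}\to \spl{2}{A/\mathfrak{p}}=\spl{2}{k(\mathfrak{p})}$, whose kernel is $\pcong{A}{\mathfrak{p}}$ by definition. To see that $r$ is surjective, note that $\spl{2}{\F{q}}$ is generated by the elementary matrices $E_{ij}(\bar{x})$, and each such matrix lifts to $E_{ij}(x)\in \spl{2}{A}$ for any $x\in A$ with $x+\mathfrak{p}=\bar{x}$; this is also the case $I=A$ of Lemma \ref{lem:crt}. A standard count (or the well-known formula) gives $|\spl{2}{\F{q}}|=q(q^2-1)$, whence $[\spl{2}{A}:\pcong{A}{\mathfrak{p}}]=q(q^2-1)$.

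For the second equality, I would use the chain of subgroups
\[
\tpcong{A}{\mathfrak{p}}\subset \tcong{A}{\mathfrak{p}}\subset \spl{2}{A},
\]
whose existence is recorded in the inclusions just after the definition of $\tpcong{A}{\mathfrak{p}^n}$. Lemma \ref{lem:bor'} identifies the first quotient with $\bor'(k(\mathfrak{p}))$, which has order $q(q-1)$ (it consists of lower triangular matrices $\matr{a}{0}{c}{a^{-1}}$ with $a\in \F{q}^\times$ and $c\in \F{q}$). For the second factor, observe that $\tcong{A}{\mathfrak{p}}$ is precisely $r^{-1}(\bor'(\F{q}))$, so by surjectivity of $r$ we have $[\spl{2}{A}:\tcong{A}{\mathfrak{p}}]=[\spl{2}{\F{q}}:\bor'(\F{q})]=q(q^2-1)/q(q-1)=q+1$. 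Multiplying the two indices gives
\[
[\spl{2}{A}:\tpcong{A}{\mathfrak{p}}]=(q+1)\cdot q(q-1)=q(q^2-1),
\]
as required.

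The argument is essentially a bookkeeping exercise, and I do not foresee any genuine obstacle: the only two nontrivial inputs are the surjectivity of reduction (already packaged in Lemma \ref{lem:crt}) and the short exact sequence of Lemma \ref{lem:bor'}, both of which are established earlier in the paper. The only point requiring a moment of care is keeping track of which of $\bor$ or $\bor'$ arises: since $\tcong{A}{\mathfrak{p}}$ is defined by $c\in \mathfrak{p}$, its reduction mod $\mathfrak{p}$ lands in the lower triangular Borel $\bor'(\F{q})$, matching the normalization in Lemma \ref{lem:bor'}.
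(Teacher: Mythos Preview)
Your argument is correct and cleaner than the paper's, but it proceeds along a different route for the second equality. The paper computes $[\spl{2}{A}:\tpcong{A}{\mathfrak{p}}]$ by reducing modulo $\mathfrak{p}^2$: it identifies the image of $\tpcong{A}{\mathfrak{p}}$ in $\spl{2}{A/\mathfrak{p}^2}$ as a group $C$ of order $q^3$ (via an explicit extension $1\to W\to C\to T\to 1$ with $|W|=q^2$, $|T|=q$), and then divides $|\spl{2}{A/\mathfrak{p}^2}|=q^4(q^2-1)$ by $q^3$. You instead stay at level $\mathfrak{p}$ and factor the index through the intermediate group $\tcong{A}{\mathfrak{p}}$, invoking Lemma~\ref{lem:bor'} for one factor and the ordinary reduction $r$ for the other. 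Your approach has the advantage of reusing an already-proved lemma rather than setting up a new extension; the paper's approach is more self-contained but requires the extra mod-$\mathfrak{p}^2$ computation.

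One small slip worth correcting: under the \emph{naive} reduction $r$ modulo $\mathfrak{p}$, the condition $c\in\mathfrak{p}$ forces $\bar c=0$, so $r(\tcong{A}{\mathfrak{p}})=\bor(\F{q})$ (upper triangular), not $\bor'(\F{q})$. The reason $\bor'$ appears in Lemma~\ref{lem:bor'} is that the map there is $\gamma_{\pi,1}$, which involves conjugation by $M(1/\pi)$ and is \emph{not} the map $r$. This does not affect your index computation, since $|\bor(\F{q})|=|\bor'(\F{q})|=q(q-1)$, but your final paragraph discussing ``which of $\bor$ or $\bor'$ arises'' has the identification backwards.
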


\begin{proof}
The first equality follows from the isomorphism 
\[
\frac{\spl{2}{A}}{\pcong{A}{\mathfrak{p}}}\cong \spl{2}{k(\mathfrak{p})}.
\]

For the second inequality, denote by $C$ the image of the map 
\[
\tpcong{A}{\mathfrak{p}}\to \spl{2}{A}\to \spl{2}{A/\mathfrak{p}^2}.
\]

Then $C$ fits into a short exact sequence 
\[
1\to W\to C\to T\to 1
\]
where 
\[
T=\left\{ \matr{a}{0}{0}{d}\in \spl{2}{A/\mathfrak{p}^2}\ | a-1,d-1\in \mathfrak{p}\right\} 
\cong k(\mathfrak{p})
\]
and 
\[
W= \left\{ \matr{1}{b}{0}{1}\in \spl{2}{A/\mathfrak{p}^2}\ \right \}\cong A/\mathfrak{p}^2.
\]

It follows that $\card{C}=[\tpcong{A}{\mathfrak{p}}:\pcong{A}{\mathfrak{p}^2}]=q^3$. Since 
$[\spl{2}{A}:\pcong{A}{\mathfrak{p}^2}]=\card{\spl{2}{A/\mathfrak{p}^2}}=q^4(q^2-1)$, the second equality 
follows.
\end{proof}
\begin{prop}\label{prop:pcong} Let $A$ be a ring of $S$-integers in a global field $K$ where 
$|S|\geq 2$. Let $\mathfrak{p}$ be a nonzero prime ideal and let $p>0$ 
be the characteristic of the 
residue field $k(\mathfrak{p})$. 

Suppose that $\mathfrak{p}^n=xA$ for some $n\geq 1$ and $x\in A$.  
Suppose further that there exist $\lambda\in A[1/x]^\times$ such 
that $\lambda^2-1\in A[1/x]^\times$ also.
 Then $\hoz{1}{\pcong{A}{\mathfrak{p}}}$ and $\hoz{1}{\tpcong{A}{\mathfrak{p}}}$ 
are  finite abelian $p$-groups. 
\end{prop}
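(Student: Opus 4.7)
Both $\pcong{A}{\mathfrak{p}}$ and $\tpcong{A}{\mathfrak{p}}$ have finite index in the finitely generated groups $\spl{2}{A}$ and $H(\mathfrak{p})$ respectively, so each of their $H_1$'s is a finitely generated abelian group. The claim therefore reduces to showing that the prime-to-$p$ torsion vanishes, i.e.\ that $\hoz{1}{\cdot}\otimes\Z[1/p] = 0$; finiteness follows from a parallel argument applied to the rational part (together with the perfection of $\spl{2}{A[1/x]}$ furnished by Proposition \ref{prop:e2}).

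The plan is a standard transfer-plus-commutator argument. By Corollary \ref{cor:congindex}, the indices $[\pcong{A}{\mathfrak{p}}:\pcong{A}{\mathfrak{p}^N}]$ and $[\tpcong{A}{\mathfrak{p}}:\tpcong{A}{\mathfrak{p}^N}]$ are powers of $p$. Since $\cores\circ\rs$ is multiplication by the index on homology, restriction yields split injections on $\hoz{1}{-}\otimes\Z[1/p]$, reducing us to killing the $\Z[1/p]$-homology of deep congruence subgroups. For $N$ sufficiently large, direct inspection of the defining conditions (combined with Lemma \ref{lem:congindex}) shows that the Vaserstein-Liehl subgroup $G_N := \tpcong{\mathfrak{p}^{N-1}}{\mathfrak{p}^{N+1}}$, in the notation of Theorem \ref{thm:vl}, sits inside $\pcong{A}{\mathfrak{p}^N} \cap \tpcong{A}{\mathfrak{p}^N}$ with $p$-power index. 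A further transfer brings us to proving $\hoz{1}{G_N}\otimes\Z[1/p] = 0$.

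By Theorem \ref{thm:vl}, $G_N$ is generated by the elementary matrices $E_{12}(b)$, $b\in\mathfrak{p}^{N-1}$, and $E_{21}(c)$, $c\in\mathfrak{p}^{N+1}$, so $\abel{G_N}$ is a quotient of $\mathfrak{p}^{N-1}\oplus\mathfrak{p}^{N+1}$. The commutator identities from the proof of Proposition \ref{prop:e2},
\[
[D(\lambda),E_{12}(b)] = E_{12}((\lambda^2-1)b) \quad\text{and}\quad [D(\lambda),E_{21}(c)] = E_{21}((\lambda^{-2}-1)c),
\]
take place in $\spl{2}{A[1/x]}$. Provided $D(\lambda)$ normalizes $G_N$, the identities live in the semidirect product overgroup $G_N \rtimes \langle D(\lambda)\rangle$, where the elementary generators, multiplied by $\lambda^2-1$ (resp.\ $\lambda^{-2}-1$), become commutators. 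Since $\lambda^2-1 \in A[1/x]^\times$ has support only at primes above $p$, multiplication by $\lambda^2-1$ is invertible after inverting $p$, forcing every elementary class to vanish in $\hoz{1}{G_N}\otimes\Z[1/p]$.

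The main obstacle is ensuring that $D(\lambda)$ actually normalizes $G_N$: conjugation sends $E_{12}(b)\mapsto E_{12}(\lambda^2 b)$ and $E_{21}(c)\mapsto E_{21}(\lambda^{-2}c)$, and for these to remain inside $\mathfrak{p}^{N-1}$ and $\mathfrak{p}^{N+1}$ respectively one needs $v_\mathfrak{p}(\lambda) = 0$. The hypothesis only supplies some $\lambda\in A[1/x]^\times$ with $\lambda^2-1 \in A[1/x]^\times$, with no control on $v_\mathfrak{p}(\lambda)$. The crucial technical step is therefore to produce from the given $\lambda$ a new unit $\mu\in A[1/x]^\times$ with $v_\mathfrak{p}(\mu) = 0$ and $\mu^2-1 \in A[1/x]^\times$ --- either by adjusting $\lambda$ by a suitable power of $x$ and checking that the units condition is preserved, or by invoking the $S$-unit structure of $A[1/x]^\times$ to extract such a $\mu$ directly. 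Once this is in hand, the rest of the argument is the routine transfer-plus-commutator package.
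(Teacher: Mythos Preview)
Your approach has a gap more fundamental than the one you flag at the end. Even if $v_{\mathfrak{p}}(\lambda)=0$ so that $D(\lambda)$ normalises $G_N$, the identity $[D(\lambda),E_{12}(b)]=E_{12}((\lambda^2-1)b)$ only exhibits $E_{12}((\lambda^2-1)b)$ as a commutator in the overgroup $G_N\rtimes\langle D(\lambda)\rangle$, not in $G_N$. Since $D(\lambda)\notin G_N$, this places no constraint on $G_N^{\mathrm{ab}}$: you would need the conjugation action of $D(\lambda)$ on $\hoz{1}{G_N}$ to be trivial, which you have not established and which has no reason to hold. The follow-up claim that ``multiplication by $\lambda^2-1$ is invertible after inverting $p$'' conflates the $A[1/x]$-module structure on the ideals with the $\Z$-module structure on $G_N^{\mathrm{ab}}$; the surjection $\mathfrak{p}^{N-1}\oplus\mathfrak{p}^{N+1}\twoheadrightarrow G_N^{\mathrm{ab}}$ is only $\Z$-linear, and nothing forces the kernel to be stable under multiplication by $\lambda^2-1$. (A secondary issue: your $G_N=\tilde{\Gamma}(\mathfrak{p}^{N-1},\mathfrak{p}^{N+1})$ has upper-right entry in $\mathfrak{p}^{N-1}$, so it is not contained in $\pcong{A}{\mathfrak{p}^N}$, which demands that entry lie in $\mathfrak{p}^N$.)

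The paper takes an entirely different route. It invokes Serre's resolution of the congruence subgroup problem for $\mathrm{SL}_2$ (\cite[Proposition~2]{serre:sl2}) to obtain directly that $[\Gamma,\Gamma]\supset\pcong{A}{I}$ for some nonzero ideal $I=\mathfrak{p}^m J$ with $\mathfrak{p}\nmid J$. The unit hypothesis on $\lambda$ is then used only in the quotient $A/J$: since $x$ becomes a unit there, so do $\lambda$ and $\lambda^2-1$, and Proposition~\ref{prop:e2}(2) gives that $\spl{2}{A/J}$ is perfect. From this one deduces that $[\Gamma,\Gamma]$ surjects onto $\spl{2}{A/J}$, and a short index computation bounds $|\Gamma^{\mathrm{ab}}|$ by a power of $q=|k(\mathfrak{p})|$. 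The decisive ingredient your argument lacks is a guarantee that $[\Gamma,\Gamma]$ is of congruence type; this is exactly what Serre's theorem supplies and what commutator identities with elements outside $\Gamma$ cannot.
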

\begin{proof}
Let $\Gamma$ denote either $\pcong{A}{\mathfrak{p}}$ or $\pcong{A}{\mathfrak{p}}$. 
By Proposition 2 of \cite{serre:sl2}
 the commutator subgroup $[\Gamma,\Gamma]$ contains a 
principal congruence subgroup $\pcong{A}{I}$  for some ideal $I$ of $A$. 
There exists a nonzero ideal $J$ of $A$ such that $I$ factors as 
$\mathfrak{p}^mJ$ where $J\not\subset\mathfrak{p}$ and $m\geq 1$. Since 
$\pcong{A}{\mathfrak{p}^{m+1}J}\subset \pcong{A}{\mathfrak{p}^mJ}$, we can suppose without loss that 
$m\geq 2$.

By definition, $\pcong{A}{I}$ is the kernel of the natural map $\spl{2}{A}\to \spl{2}{A/I}$. This 
map is surjective since $\spl{2}{A/I}=E_2(A/I)$. 

Since $J\not\subset \mathfrak{p}$, it follows that $A=\mathfrak{p}^m+ J$ 
and hence, by Lemma \ref{lem:crt}, the map $\pcong{A}{\mathfrak{p}^m}\to \spl{2}{A/J}$ 
is surjective. Since $\pcong{A}{\mathfrak{p}^m}\subset \Gamma$, it follows that the map
$\Gamma\to\spl{2}{A/J}$ is surjective. 

However, since $\spl{2}{A/J}=E_2(A/J)$ and since $x+J$ is a unit in $A/J$ the hypotheses 
of the proposition ensure that all elementary matrices are commutators and hence that 
$\spl{2}{A/J}$ is a perfect group. It then follows that the natural 
map $[\Gamma,\Gamma]\to \spl{2}{A/J}$ is surjective.

Thus $[\Gamma,\Gamma]/\pcong{A}{I}$ surjects onto $\spl{2}{A/J}$ and hence 
\[
\card{\spl{2}{A/J}}\mbox{ divides }\left[[\Gamma,\Gamma]:\pcong{A}{I}]\right].  
\]

It follows that 
\[
[\spl{2}{A}:[\Gamma,\Gamma]]|\card{\spl{2}{A/\mathfrak{p}^m}}=(q^2-1)q^{3m-2}.
\]

Since $[\spl{2}{A}:\Gamma]=q(q^2-1)$ by Lemma \ref{lem:index} it follows that 
$\card{\Gamma/[\Gamma,\Gamma]}$ divides $q^{3m-3}$, and so is a power of $p$ as claimed.
\end{proof}
\subsection{The second homology of congruence subgroups}
\begin{lem}\label{lem:borel}
Let $k$ be a finite field of characteristic $p$ and let $M$ be an $\spl{2}{k}$-module. 
Then, for all $i\geq 0$,  the natural map 
\[
\psyl{\ho{i}{\bor(k)}{M}}{p}\to \psyl{\ho{i}{\spl{2}{k}}{M}}{p}
\]
is an isomorphism.
\end{lem}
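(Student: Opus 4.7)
The plan is to invoke the classical theorem in group homology: if $H$ is a subgroup of a finite group $G$ containing the normalizer $N_G(P)$ of a Sylow $p$-subgroup $P$, then the natural corestriction induces an isomorphism $\psyl{\ho{i}{H}{M}}{p} \to \psyl{\ho{i}{G}{M}}{p}$ for every $G$-module $M$ and all $i \geq 0$. It therefore suffices to check that $\bor(k)$ satisfies this hypothesis inside $\spl{2}{k}$.

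First I would identify the upper unipotent subgroup $U := \{E_{12}(x) : x \in k\}$ as a Sylow $p$-subgroup of $\spl{2}{k}$, since $|U| = q = p^n$ while $|\spl{2}{k}| = q(q^2-1)$ with $\gcd(q, q^2-1) = 1$. Next I would verify that $N_{\spl{2}{k}}(U) = \bor(k)$: the diagonal torus clearly normalizes $U$, giving $\bor(k) \subseteq N(U)$; conversely, every non-identity element of $U$ fixes exactly the line in $k^2$ spanned by $(1,0)^T$, so any matrix normalizing $U$ must preserve this line and hence lies in $\bor(k)$.

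The key input in the proof of the general theorem is the Mackey double coset formula applied to the composite of corestriction and transfer along $\bor(k) \hookrightarrow \spl{2}{k}$. The identity double coset contributes the identity on $\ho{i}{\bor(k)}{M}$, while for $g \notin \bor(k)$ the intersection $\bor(k) \cap g\bor(k)g^{-1}$ has order coprime to $p$: any $p$-Sylow of the intersection would lie in both $U$ and $gUg^{-1}$, yet distinct Sylows of $\spl{2}{k}$ meet trivially, since a nontrivial unipotent $2 \times 2$ matrix fixes a unique line. Consequently the non-identity Mackey terms vanish on $p$-primary homology in positive degrees (as $\ho{i}{K}{M}$ is annihilated by $|K|$ for $i \geq 1$), and combined with the standard identity $\cores \circ \mathrm{tr} = [\spl{2}{k} : \bor(k)] = q+1$, which is a unit on $p$-primary parts because $\gcd(q+1, p) = 1$, this produces a two-sided inverse to the natural map.

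The main obstacle is the degree-zero case, where the ``$|K|$ kills $\ho{i}{K}{-}$'' vanishing does not apply directly to the non-identity Mackey terms. I would handle this uniformly by localizing at $p$: since $q+1$ is a unit in $\Z_{(p)}$, the permutation module $\Z_{(p)}[\spl{2}{k}/\bor(k)]$ splits $\spl{2}{k}$-equivariantly as $\Z_{(p)} \oplus I_{(p)}$, and Shapiro's lemma combined with dimension-shifting along the short exact sequence $0 \to I \to \Z[\spl{2}{k}/\bor(k)] \to \Z \to 0$ reduces injectivity on $p$-parts in degree zero to the already-established positive-degree case.
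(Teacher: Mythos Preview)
Your argument for $i \geq 1$ is correct and is the standard one: the Borel is strongly $p$-embedded in $\spl{2}{k}$ (for $g \notin \bor(k)$ the intersection $\bor(k) \cap g\bor(k)g^{-1}$ is a torus of order $q-1$), so the Mackey formula forces $\mathrm{res}\circ\mathrm{cor}$ to equal the identity on $p$-primary homology in positive degrees, while $\mathrm{cor}\circ\mathrm{res} = q+1$ gives surjectivity. The paper itself provides no proof beyond a citation, so there is nothing substantive to compare on that front.

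Your degree-zero step, however, does not close. Localizing at $p$ and invoking Shapiro along the sequence $0 \to I \to \Z[G/\bor(k)] \to \Z \to 0$ only recovers that $\ho{0}{\bor(k)}{M}_{(p)} \to \ho{0}{\spl{2}{k}}{M}_{(p)}$ is split surjective with kernel isomorphic to $\ho{0}{\spl{2}{k}}{I \otimes M}_{(p)}$; the long exact sequence together with the degree-one isomorphism merely \emph{identifies} this kernel, it does not kill it. In fact the lemma as written is false for $i = 0$: take $G = \spl{2}{k}$ and $M = \F{p}[G/\bor(k)] \cong \F{p}[\mathbb{P}^1(k)]$. By the Bruhat decomposition $\bor(k)$ has two orbits on $\mathbb{P}^1(k)$, so $M_{\bor(k)} \cong \F{p}^{2}$, while $M_{G} \cong \F{p}$; the natural map has nontrivial $p$-primary kernel. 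Your instinct that degree zero required separate treatment was right, but the obstruction is that the stated claim actually fails there, not that a further trick is missing.
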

\begin{proof}
As in the proof of \cite[Cor 3.10.2]{hut:cplx13}.
\end{proof}

\begin{prop}\label{prop:h2}
Let $A$ be a ring of $S$-integers in a global field $K$ where 
$|S|\geq 2$. 
Let $\mathfrak{p}$ be a nonzero prime ideal and let $p>0$ be the characteristic of the 
residue field $k(\mathfrak{p})$. Suppose that $\mathfrak{p}^m=xA$ for some 
$m\geq 1$, $x\in A$. 
Suppose further that there exist $\lambda\in A[1/x]^\times$ such 
that $\lambda^2-1\in A[1/x]^\times$ also.

Then the natural maps 
\[
\iota_1:\hoz{2}{\tcong{A}{\mathfrak{p}}}\to\hoz{2}{\spl{2}{A}}
\]
and
\[
\iota_2:\hoz{2}{\tcong{A}{\mathfrak{p}}}\to\hoz{2}{H(\mathfrak{p})}
\]
are surjective.
\end{prop}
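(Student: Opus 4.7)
The plan is to compare, for each of $\iota_1$ and $\iota_2$, the Hochschild--Serre spectral sequences of two short exact sequences sharing a common normal subgroup. For $\iota_1$, view the inclusion $\tcong{A}{\mathfrak{p}} \hookrightarrow \spl{2}{A}$ as inducing a morphism from
\[
1 \to \pcong{A}{\mathfrak{p}} \to \tcong{A}{\mathfrak{p}} \to \bor(k(\mathfrak{p})) \to 1
\]
to
\[
1 \to \pcong{A}{\mathfrak{p}} \to \spl{2}{A} \to \spl{2}{k(\mathfrak{p})} \to 1,
\]
which is the identity on $\pcong{A}{\mathfrak{p}}$ and the natural inclusion $\bor(k(\mathfrak{p})) \hookrightarrow \spl{2}{k(\mathfrak{p})}$ on the quotient.

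Writing $p$ for the residue characteristic, Proposition~\ref{prop:pcong} supplies the crucial fact that $\hoz{1}{\pcong{A}{\mathfrak{p}}}$ is a finite $p$-group. Applying Lemma~\ref{lem:borel} with $M = \hoz{1}{\pcong{A}{\mathfrak{p}}}$ makes the comparison maps at $E^2_{0,1}$ and $E^2_{1,1}$ into isomorphisms (the coefficients are wholly $p$-primary). The $E^2_{0,2}$-comparison is the further-quotient map of coinvariants of $\hoz{2}{\pcong{A}{\mathfrak{p}}}$, hence surjective. For the $E^2_{2,0}$-comparison $\hoz{2}{\bor(k(\mathfrak{p}))} \to \hoz{2}{\spl{2}{k(\mathfrak{p})}}$, Lemma~\ref{lem:borel} with $M = \Z$ gives surjectivity on $p$-primary parts; full surjectivity then follows from the classical fact that $\hoz{2}{\spl{2}{\F{q}}}$ is a $p$-group whenever $q$ is a power of $p$. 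With these four facts in hand the isomorphism at $(0,1)$ ensures that a cycle in $\ker(d^2: E^2_{2,0} \to E^2_{0,1})$ on the target side lifts to a cycle on the source side, so the map on $E^\infty_{2,0}$ is surjective; the $E^\infty_{0,2}$- and $E^\infty_{1,1}$-comparisons are surjective as further-quotients of surjections. A filtration induction then yields surjectivity of $\iota_1$ on $\hoz{2}$.

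For $\iota_2$, the argument is entirely parallel, now applied to the extensions
\[
1 \to \tpcong{A}{\mathfrak{p}} \to \tcong{A}{\mathfrak{p}} \to \bor'(k(\mathfrak{p})) \to 1
\]
from Lemma~\ref{lem:bor'}, and
\[
1 \to \tpcong{A}{\mathfrak{p}} \to H(\mathfrak{p}) \to \spl{2}{k(\mathfrak{p})} \to 1
\]
from the surjectivity of $\gamma_{\pi,1}$. Proposition~\ref{prop:pcong} again supplies that $\hoz{1}{\tpcong{A}{\mathfrak{p}}}$ is a finite $p$-group, and $\bor'(k(\mathfrak{p}))$ is an inner conjugate of $\bor(k(\mathfrak{p}))$ inside $\spl{2}{k(\mathfrak{p})}$, so Lemma~\ref{lem:borel} applies verbatim. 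The hardest step throughout is the $E^2_{2,0}$-comparison, which depends on the known $p$-torsionness of the Schur multiplier of $\spl{2}{\F{q}}$ in its defining characteristic; the rest of the argument is essentially mechanical given Proposition~\ref{prop:pcong} and Lemma~\ref{lem:borel}.
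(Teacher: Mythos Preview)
Your proof is correct and follows essentially the same approach as the paper: compare Hochschild--Serre spectral sequences of the two extensions sharing the congruence kernel, use Proposition~\ref{prop:pcong} to make $\hoz{1}$ of that kernel $p$-primary, invoke Lemma~\ref{lem:borel} to control the $E^2_{i,1}$ and $E^2_{2,0}$ terms, and then conclude by a filtration argument. The paper actually proves the slightly stronger statement that the comparison maps on $E^\infty_{1,1}$ and $E^\infty_{2,0}$ are \emph{isomorphisms} (noting that $\hoz{2}{\bor(k)}$ is itself already $p$-primary), whereas you argue only surjectivity; but that is all that the conclusion requires, and your handling of the $E^\infty_{0,2}$ piece via surjectivity of coinvariants is equivalent to the paper's use of the edge homomorphism.
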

\begin{proof}
Let $k=k(\mathfrak{p})$.  
There is a commutative diagram  of group extensions
\[
\xymatrix{
1\ar[r]
&\pcong{A}{\mathfrak{p}}\ar[r]\ar^-{\mathrm{id}}[d]
&\tcong{A}{\mathfrak{p}}\ar[r]\ar^-{\iota_1}[d]
&\bor(k)\ar[r]\ar^-{\iota}[d]
&1\\
1\ar[r]
&\pcong{A}{\mathfrak{p}}\ar[r]
&\spl{2}{A}\ar[r]
&\spl{2}{k}\ar[r]
&1\\
}
\]
and  (using Lemma \ref{lem:bor'})
\[
\xymatrix{
1\ar[r]
&\tpcong{A}{\mathfrak{p}}\ar[r]\ar^-{\mathrm{id}}[d]
&\tcong{A}{\mathfrak{p}}\ar[r]\ar^-{\iota_2}[d]
&\bor'(k)\ar[r]\ar^-{\iota}[d]
&1\\
1\ar[r]
&\tpcong{A}{\mathfrak{p}}\ar[r]
&H(\mathfrak{p})\ar^-{\gamma_{\pi,1}}[r]
&\spl{2}{k}\ar[r]
&1.\\
}
\]

We give the argument for $\iota_1$. The analogous argument for $\iota_2$ is achieved by replacing 
$\bor(k)$ with $\bor'(k)$. 

The top group extension gives rise to a spectral sequence 
\[
E^2_{i,j}(\tcong{A}{\mathfrak{p}})=\ho{i}{\bor(k)}{\hoz{j}{\pcong{A}{\mathfrak{p}}}}\Rightarrow 
\hoz{i+j}{\tcong{A}{\mathfrak{p}}}
\]
and the lower one gives rise to the spectral sequence
\[
E^2_{i,j}(\spl{2}{A})=\ho{i}{\spl{2}{k}}{\hoz{j}{\pcong{A}{\mathfrak{p}}}}\Rightarrow 
\hoz{i+j}{\spl{2}{A}}.
\]
The map of extensions induces a natural map of spectral sequences compatible with 
the map $\iota_1$ on abutments.

For $H=\tcong{A}{\mathfrak{p}}$ or $\spl{2}{A}$, the image of the edge homomorphism 
$E^\infty_{0,j}(H)\to \hoz{j}{H}$ is equal to the image of 
$\hoz{j}{\pcong{A}{\mathfrak{p}}}\to \hoz{j}{H}$. 
Thus, comparing the $E^\infty$-terms of total degree $2$, we obtain
 a commutative diagram of the 
form 
\[
\xymatrix{
\hoz{2}{\pcong{A}{\mathfrak{p}}}\ar[r]\ar^-{\mathrm{id}}[d]
&\hoz{2}{\tcong{A}{\mathfrak{p}}}\ar[r]\ar^-{\iota_1}[d]
&C(\tcong{A}{\mathfrak{p}})\ar[r]\ar[d]
&0\\
\hoz{2}{\pcong{A}{\mathfrak{p}}}\ar[r]
&\hoz{2}{\spl{2}{A}}\ar[r]
&C(\spl{2}{A})\ar[r]
&0\\
}
\] 
where,  for $H=\tcong{A}{\mathfrak{p}}$ or $\spl{2}{A}$, 
 $C(H)$ is a group fitting into an exact sequence
\[
0\to E^{\infty}_{1,1}(H)\to C(H)\to E^\infty_{2,0}(H)\to 0.
\]

Since $\hoz{1}{\pcong{A}{\mathfrak{p}}}$ is a finite abelian $p$-group, it follows from 
Lemma \ref{lem:borel} that the natural maps 
\[
E^2_{i,1}(\tcong{A}{\mathfrak{p}})=\ho{i}{\bor(k)}{\hoz{1}{\pcong{A}{\mathfrak{p}}}}\to
\ho{i}{\spl{2}{k}}{\hoz{1}{\pcong{A}{\mathfrak{p}}}}=E^2_{i,1}(\spl{2}{A}) 
\]
are all isomorphisms. 

Since, furthermore, all these groups $E^2_{i,1}$ are finite abelian $p$-groups,
it follows that the differentials
\[
d^2_{i,0}:E^2_{i,0}(H)\to E^2_{i-1,1}(H)
\]
factor through $\psyl{E^2_{i,0}(H)}{p}$, for $H=\tcong{A}{\pi}$ or $\spl{2}{A}$.

By Lemma \ref{lem:borel} again, we have natural isomorphisms
\[
\psyl{E^2_{i,0}(\tcong{A}{\mathfrak{p}})}{p}=\psyl{\hoz{i}{\bor(k)}}{p}\cong 
\psyl{\hoz{i}{\spl{2}{k}}}{p}=\psyl{E^2_{i,0}(\spl{2}{A})}{p}.
\] 

Thus, we have 
\[
E^\infty_{1,1}(\tcong{A}{\mathfrak{p}})\cong E^\infty_{1,1}(\spl{2}{A})
\]
since 
\[
E^\infty_{1,1}(H)=\coker{d^2:\psyl{E^2_{3,0}(H)}{p}\to E^2_{1,1}(H)}
\]
when $H=\tcong{A}{\mathfrak{p}}$ or $\spl{2}{A}$. 

Finally, for $H=\tcong{A}{\mathfrak{p}}$ or $\spl{2}{A}$, we have 
\[
E^\infty_{2,0}(H)=\ker{d^2:E^2_{2,0}(H)\to E^2_{0,1}(H)}.
\]
But straightforward calculations (see, for example, \cite[Section 3]{hut:cplx13}) 
show that 
\[
E^2_{2,0}(\tcong{A}{\mathfrak{p}})=\hoz{2}{\bor(k)}= \psyl{\hoz{2}{\bor(k)}}{p}
= \psyl{\hoz{2}{\spl{2}{k}}}{p}= \hoz{2}{\spl{2}{k}}=E^2_{2,0}(\spl{2}{A}).
\]

Thus 
\[
E^\infty_{2,0}(\tcong{A}{\mathfrak{p}})\cong E^\infty_{2,0}(\spl{2}{A}).
\]

Hence the map $C(\tcong{A}{\mathfrak{p}})
\to C(\spl{2}{A})$ is an isomorphism, and the result follows. 
\end{proof}

\subsection{An exact sequence for the second homology of $\mathrm{SL}_2$ of 
$S$-integers}
Let $K$ be a global field and let $S\subset T$ be nonempty sets of primes of $K$ containing 
the infinite primes. Then there is a natural short exact sequence 
\[
\xymatrix{
0\ar[r]
& K_2(\ntr{S})\ar[r]
& K_2(\ntr{T})\ar^-{\sum\tau_{\mathfrak{p}}}[r]
& \oplus_{\mathfrak{p}\in T\setminus S}k(\mathfrak{p})^\times\ar[r]
&0.
}
\] 

In this section, we demonstrate an analogous exact sequence for $\hoz{2}{\spl{2}{\ntr{S}}}$, at
least when $S$ is sufficiently large.
 
\begin{thm}\label{thm:mv}
Let $A$ be a ring of $S$-integers in a global field $K$ where 
$|S|\geq 2$. 
Let $\mathfrak{p}$ be a nonzero prime ideal and let $p>0$ be the characteristic of the 
residue field $k(\mathfrak{p})$. 

Suppose also that  there exists $\lambda\in A^\times$ such 
that $\lambda^2-1\in A^\times$ also.

Let $x\in A$ and $m\geq 1$ such that$\mathfrak{p}^m=xA$. 

Then there is  a natural exact sequence 
\[
\xymatrix{
 \hoz{2}{\spl{2}{A}}\ar[r]
& \hoz{2}{\spl{2}{A[1/x]}}\ar^-{\delta}[r]
& \hoz{1}{k(\mathfrak{p})^\times}\ar[r]
& 0
}
\]

Here the map $\delta$ fits into a commutative diagram
\[
\xymatrix{
\hoz{2}{\spl{2}{A[1/x]}}\ar^-{\delta}[r]\ar[d]
& \hoz{1}{k(\mathfrak{p})^\times}\ar^-{\cong}[d]\\
\hoz{2}{\mathrm{SL}(K)}=\milk{2}{K}\ar^-{\tau_{\mathfrak{p}}}[r]
&k(\mathfrak{p})^\times\\
}
\] 
where $\tau_{\mathfrak{p}}:\milk{2}{K}\to k(\mathfrak{p})^\times$ is the tame symbol
\[
\tau_{\mathfrak{p}}(\{ x,y\})=(-1)^{v(x)v(y)}x^{-v(y)}y^{v(x)}\pmod{\mathfrak{p}}\in k(\mathfrak{p})^\times.
\]
\end{thm}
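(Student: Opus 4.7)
The plan is to extract the sequence directly from the Mayer-Vietoris long exact sequence associated to the amalgamated product decomposition $\spl{2}{A[1/x]} = \spl{2}{A}\ast_{\tcong{A}{\mathfrak{p}}} H(\mathfrak{p})$ constructed earlier in this section. Writing $\Gamma_0 = \tcong{A}{\mathfrak{p}}$, the relevant portion reads
\begin{align*}
\hoz{2}{\Gamma_0} &\xrightarrow{(\iota_{1*},\iota_{2*})} \hoz{2}{\spl{2}{A}}\oplus\hoz{2}{H(\mathfrak{p})} \to \hoz{2}{\spl{2}{A[1/x]}} \\
&\xrightarrow{\delta} \hoz{1}{\Gamma_0} \to \hoz{1}{\spl{2}{A}}\oplus\hoz{1}{H(\mathfrak{p})}.
\end{align*}
For exactness at $\hoz{2}{\spl{2}{A[1/x]}}$, Proposition \ref{prop:h2} supplies surjectivity of $\iota_{2*}$, so any $(z_1,z_2)$ in the middle term can be rewritten by choosing $w$ with $z_2=\iota_{2*}(w)$; then $j_{2*}(z_2)-j_{1*}(z_1)=j_{1*}(\iota_{1*}(w)-z_1)$ by the relation $j_1\circ\iota_1=j_2\circ\iota_2$. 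Hence $\ker{\delta}$ coincides with the image of the natural map $\hoz{2}{\spl{2}{A}}\to \hoz{2}{\spl{2}{A[1/x]}}$.

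For the surjectivity of $\delta$ onto $k(\mathfrak{p})^\times$, first identify $\hoz{1}{\Gamma_0}\cong k(\mathfrak{p})^\times$ via $\rho$ (Proposition \ref{prop:tcong}), and then show the last map above is zero. The group $\spl{2}{A}=E_2(A)$ is perfect by Proposition \ref{prop:e2}, giving $\hoz{1}{\spl{2}{A}}=0$. For $\hoz{1}{H(\mathfrak{p})}$, the hypothesis on $\lambda$ forces $\bar{\lambda}\in k(\mathfrak{p})\setminus\{0,\pm 1\}$, so $|k(\mathfrak{p})|\geq 4$ and $\spl{2}{k(\mathfrak{p})}$ is perfect. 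The extension $1\to\tpcong{A}{\mathfrak{p}}\to H(\mathfrak{p})\to\spl{2}{k(\mathfrak{p})}\to 1$ then satisfies $[H(\mathfrak{p}),H(\mathfrak{p})]\cdot\tpcong{A}{\mathfrak{p}}=H(\mathfrak{p})$, so $\ab{H(\mathfrak{p})}$ is a quotient of $\ab{\tpcong{A}{\mathfrak{p}}}$, which is a finite $p$-group by Proposition \ref{prop:pcong}. Since $|k(\mathfrak{p})^\times|=q-1$ is coprime to $p$, the image of $k(\mathfrak{p})^\times\cong\hoz{1}{\Gamma_0}$ in $\ab{H(\mathfrak{p})}$ is trivial.

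For the tame symbol diagram, I will use naturality of Mayer-Vietoris with respect to the inclusion $\spl{2}{A[1/x]}\hookrightarrow\spl{2}{K}=\spl{2}{A_{\mathfrak{p}}}\ast_{\tcong{A_{\mathfrak{p}}}{\tilde{\mathfrak{p}}}} H(\tilde{\mathfrak{p}})$, which makes the local $\delta$ compatible with the global connecting map $\delta'$. Matsumoto-Moore (Theorem \ref{thm:mm}) identifies $\hoz{2}{\spl{2}{K}}\cong K_2(2,K)$, and Proposition \ref{prop:delta} evaluates $\rho'_*\circ\delta'$ on $c(a,b)$ as $(-1)^{v(a)v(b)}b^{v(a)}/a^{v(b)}$. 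Since the stabilization $K_2(2,K)\to K_2(K)=\milk{2}{K}$ sends $c(a,b)\mapsto\{a,b\}$, this value is precisely $\tau_{\mathfrak{p}}(\{a,b\})$, which gives the required commutative square.

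The main obstacle is the vanishing of $\hoz{1}{H(\mathfrak{p})}$. Since $H(\mathfrak{p})$ is not of the form $\tilde{\Gamma}(I_1,I_2)$ for integral ideals $I_1,I_2$ (the upper-right entry runs over the fractional ideal $\mathfrak{p}^{-1}$), Vaserstein-Liehl does not apply to it directly. The indirect argument above—combining perfectness of $\spl{2}{k(\mathfrak{p})}$ (which requires the hypothesis on $\lambda$) with the $p$-primary structure of $\ab{\tpcong{A}{\mathfrak{p}}}$ from Proposition \ref{prop:pcong}—is the key substitute.
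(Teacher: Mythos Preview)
Your proposal is correct and follows the same route as the paper: extract the sequence from the Mayer--Vietoris long exact sequence for the amalgam $\spl{2}{A[1/x]}=\spl{2}{A}\ast_{\tcong{A}{\mathfrak p}}H(\mathfrak p)$, invoke Proposition~\ref{prop:h2} for surjectivity of $\iota_{2*}$ on $\mathrm{H}_2$, identify $\hoz{1}{\tcong{A}{\mathfrak p}}\cong k(\mathfrak p)^\times$ via Proposition~\ref{prop:tcong}, and read off the tame-symbol description from Proposition~\ref{prop:delta} via naturality with the localized amalgam.

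The one place where you go beyond the paper's (very terse) argument is the surjectivity of $\delta$. The paper simply asserts that the Mayer--Vietoris sequence ``yields'' the sequence ending in $\to 0$, without spelling out why the map $\hoz{1}{\tcong{A}{\mathfrak p}}\to \hoz{1}{\spl{2}{A}}\oplus\hoz{1}{H(\mathfrak p)}$ vanishes. Your argument for this---$\spl{2}{A}$ is perfect by Proposition~\ref{prop:e2}; $\spl{2}{k(\mathfrak p)}$ is perfect since $\bar\lambda\notin\{0,\pm 1\}$; hence $\ab{H(\mathfrak p)}$ is a quotient of $\ab{\tpcong{A}{\mathfrak p}}$, which is a finite $p$-group by Proposition~\ref{prop:pcong}; and $\lvert k(\mathfrak p)^\times\rvert$ is prime to $p$---is a clean and correct way to supply this missing step. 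It is presumably what the author had in mind, and in any case it is the natural completion of the argument.
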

\begin{proof}

By Proposition \ref{prop:h2} the map 
$\iota_2:\hoz{2}{\tcong{A}{\mathfrak{p}}}\to \hoz{2}{H(\mathfrak{p})}$ is surjective. 

Thus the Mayer-Vietoris sequence yields the exact sequence
\[
\xymatrix{
 \hoz{2}{\spl{2}{A}}\ar[r]
& \hoz{2}{\spl{2}{A[1/x]}}\ar^-{\delta}[r]
& \hoz{1}{\tcong{A}{\mathfrak{p}}}\ar[r]
& 0.
}
\]

The remaining statements of the theorem follow from Proposition \ref{prop:delta} and 
Proposition \ref{prop:tcong}.
\end{proof}
\begin{rem} In this proof, 
the hypothesis that $\lambda,\lambda^2-1\in A^\times$ is needed so that Proposition \ref{prop:tcong}
is validly applied.

The first part of the proof only requires the weaker condition that 
$\lambda,\lambda^2-1\in A[1/x]^\times$. For example, taking $A=\Z[1/3]$, $\mathfrak{p}= 2A$,
$x=2$ 
and 
$\lambda=3$, 
we obtain an exact sequence
\[
\hoz{2}{\spl{2}{\Z[1/3]}}\to\hoz{2}{\spl{2}{\Z[1/6]}}\to \hoz{1}{\tcong{\Z[1/3]}{2}}\to 0.
\]

In this sequence, 
\[
\hoz{2}{\spl{2}{\Z[1/3]}}\cong \Z\mbox{ and } \hoz{2}{\spl{2}{\Z[1/6]}}\cong \Z\oplus \Z/2
\]
by the calculations of Adem-Naffah, \cite{adem:naffah}, and Tuan-Ellis, \cite{tuanellis}.

Thus 
\[
\hoz{1}{\tcong{\Z[1/3]}{2}}\not= 0
\]
while $\hoz{1}{k(2)^\times}=\hoz{1}{\F{2}^\times}=0$, so that the conclusion of Proposition 
\ref{prop:tcong} is false in the case $A=\Z[1/3]$ and $\mathfrak{p}=2A$. 
\end{rem}

\begin{rem} Theorem \ref{thm:mv} is not valid for more general Dedekind Domains $A$, 
even when there is a unit $\lambda$ such that $\lambda^2-1$ is also a unit.  

For example, let $k$ be an infinite field and let $K=k(t)$, $A=k[t]$, $\mathfrak{p}=tA$, 
$x=t$. It is shown in \cite[Theorem 4.1]{hut:laurent} that 
the cokernel of the natural map 
\[
\hoz{2}{\spl{2}{A}}\to\hoz{2}{\spl{2}{A[1/t]}}
\]   
is isomorphic to $\mwk{1}{k}$, the first Milnor-Witt $K$-group of the residue field $k$. It seems 
 reasonable to suppose that this statement should be true for a larger class of Dedekind Domains.

Note that there is a natural surjective map $\mwk{1}{k}\to \milk{1}{k}\cong k^\times$ which is an 
isomorphism when the field $k$ is finite. However, in general, the kernel of this homomorphism is 
$I^2(k)$ (see section \ref{sec:mwk} above). 
\end{rem}
\begin{cor}\label{cor:mv}
Let $K$ be a global field. 
Let $S$ be a set of primes of $K$ containing the infinite primes. 
Suppose 
that $\card{S}\geq 2$ and that $\ntr{S}$ contains a unit 
$\lambda$ such that $\lambda^2-1$ is also a unit.

Let $T$ be any set of primes containing $S$.
Then there is a natural exact sequence
\[
\hoz{2}{\spl{2}{\ntr{S}}}\to \hoz{2}{\spl{2}{\ntr{T}}}\to \oplus_{\mathfrak{p}\in T\setminus S}
k(\mathfrak{p})^\times\to 0.
\]
\end{cor}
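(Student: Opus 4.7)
The plan is to induct on $|T \setminus S|$ in the finite case, then pass to a direct limit to handle arbitrary $T$. The base case $|T \setminus S| = 0$ is trivial and the case $|T \setminus S| = 1$ is exactly Theorem \ref{thm:mv}. For the inductive step, write $T = U \cup \{\mathfrak{p}\}$ with $\mathfrak{p} \notin U$ and $|U \setminus S| = n-1$. The hypotheses of Theorem \ref{thm:mv} for the inclusion $(U, T)$ are readily verified: $|U| \geq |S| \geq 2$; the unit $\lambda$ with $\lambda^2 - 1$ a unit still lies in $\ntr{U}^\times$ since $\ntr{S} \subseteq \ntr{U}$; and some power $\mathfrak{p}^m = x\ntr{U}$ is principal because the class group $\cl{\ntr{U}}$ is finite for rings of $S$-integers in global fields.

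To splice the two exact sequences supplied by induction and by Theorem \ref{thm:mv}, one observes that for every prime $\mathfrak{q}$ the boundary map $\delta_{\mathfrak{q}}$ factors as the stabilization $\hoz{2}{\spl{2}{\ntr{T}}} \to \milk{2}{K}$ followed by the tame symbol $\tau_{\mathfrak{q}}$, by the commutative diagram in Theorem \ref{thm:mv}. This common factorization makes the various boundary maps compatible across all intermediate inclusions $\ntr{S} \subseteq \ntr{U} \subseteq \ntr{T}$. Define the combined map $\phi_T := \bigoplus_{\mathfrak{q} \in T \setminus S} \delta_{\mathfrak{q}}$.

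The desired exactness is then a short diagram chase. If $\phi_T(z) = 0$, then in particular $\delta_{\mathfrak{p}}(z) = 0$, so by Theorem \ref{thm:mv} the element $z$ is the image of some $w \in \hoz{2}{\spl{2}{\ntr{U}}}$; by the compatibility just noted $\delta_{\mathfrak{q}, U}(w) = \delta_{\mathfrak{q}, T}(z) = 0$ for every $\mathfrak{q} \in U \setminus S$, so by the inductive hypothesis $w$ lies in the image of $\hoz{2}{\spl{2}{\ntr{S}}}$, and therefore so does $z$. For surjectivity of $\phi_T$, given $(a_{\mathfrak{q}})$, first lift $a_{\mathfrak{p}}$ to some $z_0$ via Theorem \ref{thm:mv}; the discrepancies at the coordinates in $U \setminus S$ are then corrected by adding the image of a suitable $w \in \hoz{2}{\spl{2}{\ntr{U}}}$, which exists by the inductive surjectivity and whose image in $\hoz{2}{\spl{2}{\ntr{T}}}$ has trivial $\mathfrak{p}$-component by Theorem \ref{thm:mv}.

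For the infinite case, write $\ntr{T} = \varinjlim \ntr{T'}$ over finite intermediate sets $S \subseteq T' \subseteq T$; since $\spl{2}{-}$ commutes with such colimits, and both group homology and the direct sum $\bigoplus_{\mathfrak{q}} k(\mathfrak{q})^\times$ commute with filtered colimits and preserve exactness, passing to the limit of the exact sequences established for finite $T'$ yields the result. The only real subtlety is the inductive step above, and it is essentially bookkeeping once one recognizes that all boundary maps factor through the tame symbols on $\milk{2}{K}$.
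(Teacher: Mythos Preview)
Your proof is correct and follows essentially the same route as the paper: induction on $\card{T\setminus S}$, using Theorem~\ref{thm:mv} for the step from $T'$ to $T' \cup \{\mathfrak{q}\}$ and the compatibility of all boundary maps with the tame symbols on $\milk{2}{K}$. The only cosmetic difference is that the paper packages your diagram chase as a single application of the snake lemma to the commutative diagram
\[
\xymatrix{
&\hoz{2}{\spl{2}{\ntr{T'}}}\ar[r]\ar@{>>}[d]
&\hoz{2}{\spl{2}{\ntr{T}}}\ar[r]\ar[d]
&k(\mathfrak{q})^\times\ar[r]\ar^-{\mathrm{id}}[d]
&0\\
0\ar[r]
&\oplus_{\mathfrak{p}\in T'\setminus S}k(\mathfrak{p})^\times\ar[r]
&\oplus_{\mathfrak{p}\in T\setminus S}k(\mathfrak{p})^\times\ar[r]
&k(\mathfrak{q})^\times\ar[r]
&0,\\
}
\]
which yields surjectivity of the middle vertical and surjectivity of $\ker(\text{left})\to\ker(\text{middle})$ in one stroke. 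Your treatment of the infinite case via filtered colimits is a small addition: the paper's proof of Corollary~\ref{cor:mv} tacitly assumes $T\setminus S$ finite and defers the passage to the limit to Corollary~\ref{cor:mv2}.
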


\begin{proof}
We proceed by induction on $\card{T\setminus S}$. 

The case $\card{T\setminus S}=1$ is just Theorem \ref{thm:mv}. 

The inductive step follows immediately by applying the snake lemma to the commutative diagram 
with exact rows
\[
\xymatrix{
&\hoz{2}{\spl{2}{\ntr{T'}}}\ar[r]\ar@{>>}[d]
&\hoz{2}{\spl{2}{\ntr{T}}}\ar[r]\ar[d]
&k(\mathfrak{q})^\times\ar[r]\ar^-{\mathrm{id}}[d]
&0\\
0\ar[r]
&\oplus_{\mathfrak{p}\in T'}k(\mathfrak{p})^\times\ar[r]
&\oplus_{\mathfrak{p}\in T}k(\mathfrak{p})^\times\ar[r]
&k(\mathfrak{q})^\times\ar[r]
&0\\
}
\]
where $\mathfrak{q}$ is any  element in $T\setminus S$ and $T'=T\setminus\{ \mathfrak{q}\}$.  
\end{proof}

\begin{cor}\label{cor:mv2}
Let $K$ be a global field. Let $S$ be a set of primes of $K$ containing the infinite primes. Suppose 
that $\card{S}\geq 2$ and that $\ntr{S}$ contains a unit $\lambda$ such that $\lambda^2-1$ is also 
a unit.

Then there is a natural exact sequence
\[
\hoz{2}{\spl{2}{\ntr{S}}}\to \hoz{2}{\spl{2}{K}}\to 
\oplus_{\mathfrak{p}\not\in S}k(\mathfrak{p})^\times\to 0.
\]

\end{cor}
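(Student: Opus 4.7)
The plan is to derive this from Corollary \ref{cor:mv} by a direct limit argument over finite sets of primes. First I would observe that if we let $T$ range over sets of primes of $K$ containing $S$ (directed by inclusion), then $K = \bigcup_T \ntr{T}$, and therefore
\[
\spl{2}{K} = \colim_T \spl{2}{\ntr{T}}.
\]
Filtered colimits commute with group homology (since the bar resolution is functorial and filtered colimits of chain complexes commute with taking homology), so
\[
\hoz{2}{\spl{2}{K}} \cong \colim_T \hoz{2}{\spl{2}{\ntr{T}}}.
\]
Similarly, the direct sum $\oplus_{\mathfrak{p}\not\in S} k(\mathfrak{p})^\times$ is naturally identified with $\colim_T \oplus_{\mathfrak{p}\in T\setminus S} k(\mathfrak{p})^\times$, where the transition maps are the canonical inclusions.

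Next, for each $T\supseteq S$ we would invoke Corollary \ref{cor:mv} to obtain the exact sequence
\[
\hoz{2}{\spl{2}{\ntr{S}}}\to \hoz{2}{\spl{2}{\ntr{T}}}\to \oplus_{\mathfrak{p}\in T\setminus S} k(\mathfrak{p})^\times\to 0.
\]
These sequences are natural in $T$: enlarging $T$ to $T'\supseteq T$ produces a commutative diagram whose left column is the identity on $\hoz{2}{\spl{2}{\ntr{S}}}$, whose middle column is functoriality of homology under $\spl{2}{\ntr{T}}\to \spl{2}{\ntr{T'}}$, and whose right column is the inclusion of direct summands. Naturality is evident from the Mayer--Vietoris construction used to prove Theorem \ref{thm:mv} and from the tame-symbol description of $\delta$.

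Finally I would take the colimit of the family of exact sequences. Because filtered colimits of abelian groups are exact, the colimit sequence
\[
\hoz{2}{\spl{2}{\ntr{S}}}\to \hoz{2}{\spl{2}{K}}\to \oplus_{\mathfrak{p}\not\in S} k(\mathfrak{p})^\times \to 0
\]
is exact, which is the claim. There is no real obstacle here: all the substantive work has already been done in Theorem \ref{thm:mv} and Corollary \ref{cor:mv}; the only thing to verify carefully is the naturality of the Mayer--Vietoris exact sequence with respect to enlarging $T$, so that one genuinely has a directed system and may pass to the colimit.
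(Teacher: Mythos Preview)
Your proposal is correct and follows exactly the same approach as the paper's proof, which simply observes that $K=\lim_{S\subset T}\ntr{T}$ and passes to the colimit in Corollary~\ref{cor:mv}. You have spelled out the justification (filtered colimits commute with homology and are exact, and the sequences are natural in $T$) that the paper leaves implicit.
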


\begin{proof}
Since 
\[
K=\lim_{S\subset T}\ntr{T}
\]
this follows from Corollary \ref{cor:mv} by taking (co)limits. 
\end{proof}

\section{The main theorem}\label{sec:main}

Let $K$ be a global field. In this section, we use the results above to 
prove our main theorem which identifies 
$\hoz{2}{\spl{2}{\ntr{S}}}$ with a certain subgroup of $K_2(2,K)$, which we now describe.
 
For a prime  $\mathfrak{p}$ of $K$, we denote by $T_{\mathfrak{p}}$ the composite 
\[
\xymatrix{
K_2(2,K)\ar@{>>}[r]
&\milk{2}{K}\ar^-{\tau_{\mathfrak{p}}}[r]
&k(\mathfrak{p})^\times
}
\]
where $\tau_{\mathfrak{p}}$ is the tame symbol, as above. 

When $S$ is a nonempty set of primes of $K$ containing the infinite primes, we set
\[
\tilde{K}_2(2,\ntr{S}):=\ker{ K_2(2,K)\to \oplus_{\mathfrak{p}\not\in S}k(\mathfrak{p})^\times}.
\]

We begin by noting that this group is closely related to $K_2(\ntr{S})$:
\begin{lem}
For any global field $K$ and for any nonempty set $S$ of primes which contains the infinite 
primes there is a natural exact sequence
\[
0\to I^3(K)\to \tilde{K}_2(2,\ntr{S})\to K_2(\ntr{S})\to 0.
\]

In particular, $\tilde{K}_2(2,\ntr{S})\cong K_2(\ntr{S})$ if $K$ is of positive characteristic 
or is a totally imaginary number field. 
\end{lem}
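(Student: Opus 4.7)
The plan is to combine two standard short exact sequences via the snake lemma. The first input is Corollary~\ref{cor:mwk}: since every global field is infinite, we have
\[
0 \to I^3(K) \to K_2(2,K) \to \milk{2}{K} \to 0.
\]
The second is the Quillen localization sequence in $K$-theory for the Dedekind domain $\ntr{S}$, which, together with the injectivity of $\ntr{S}^\times \hookrightarrow K^\times$ and Matsumoto's identification $K_2(K)=\milk{2}{K}$, yields
\[
0 \to K_2(\ntr{S}) \to \milk{2}{K} \to \bigoplus_{\mathfrak{p}\not\in S} k(\mathfrak{p})^\times \to 0,
\]
whose right-hand map is the sum of the tame symbols $\tau_{\mathfrak{p}}$.

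Since by definition $T_{\mathfrak{p}}$ is the composite $K_2(2,K) \twoheadrightarrow \milk{2}{K} \xrightarrow{\tau_{\mathfrak{p}}} k(\mathfrak{p})^\times$, these two sequences fit into a commutative ladder of short exact sequences whose left vertical arrow is $0$, middle is $\oplus T_{\mathfrak{p}}$ and right is $\oplus \tau_{\mathfrak{p}}$, with bottom row $0 \to 0 \to \bigoplus_{\mathfrak{p}\not\in S} k(\mathfrak{p})^\times = \bigoplus_{\mathfrak{p}\not\in S} k(\mathfrak{p})^\times \to 0$. By definition the kernel of the middle vertical arrow is $\tilde{K}_2(2,\ntr{S})$, and the kernel of the right vertical arrow is $K_2(\ntr{S})$. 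Applying the snake lemma, together with the surjectivity of $\oplus \tau_{\mathfrak{p}}$, yields exactly
\[
0 \to I^3(K) \to \tilde{K}_2(2,\ntr{S}) \to K_2(\ntr{S}) \to 0.
\]

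For the ``in particular'' assertion I would deduce $I^3(K)=0$ in the two cases via the Hasse--Minkowski principle for quadratic forms: the natural map $I^3(K) \to \prod_v I^3(K_v)$, indexed by the completions, is injective, while each target vanishes. For a totally imaginary number field each $K_v$ is either $\C$ (whose fundamental ideal is zero) or a non-archimedean local field of $u$-invariant at most $4$; for a global field of positive characteristic the completions are all local fields with vanishing $I^3$. The diagram chase is formal and the input sequences are standard, so the main point requiring care---though not a genuine obstacle---is verifying these classical vanishing statements uniformly in positive characteristic, where one must appeal to the function-field analogue of Hasse--Minkowski and the corresponding $u$-invariant bound.
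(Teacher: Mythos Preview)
Your proof is correct and follows essentially the same approach as the paper: both apply the snake lemma to combine Corollary~\ref{cor:mwk} with the localization sequence for $K_2(\ntr{S})$. The only cosmetic difference is the orientation of the diagram---the paper places $\tilde{K}_2(2,\ntr{S})$ and $K_2(\ntr{S})$ as the left column and the identity on $\oplus_{\mathfrak{p}\not\in S}k(\mathfrak{p})^\times$ as the right column, whereas you put $I^3(K)$ on the left and the tame symbols as vertical maps---but the snake lemma output is identical. For the vanishing of $I^3(K)$, the paper simply quotes the known fact $I^3(K)\cong\Z^{r(K)}$ for global fields, which is exactly the Hasse--Minkowski argument you sketch.
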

\begin{proof}
Apply the snake lemma and Corollary \ref{cor:mwk} (1) to the map of short exact 
sequences
\[
\xymatrix{
0\ar[r]
&\tilde{K}_2(2,\ntr{S})\ar[r]\ar[d]
&K_2(2,K)\ar[r]\ar[d]
&\oplus_{\mathfrak{p}\not\in S}k(\mathfrak{p})^\times\ar^-{\mathrm{id}}[d]\ar[r]
&0\\
0\ar[r]
&K_2(\ntr{S})\ar[r]
&\milk{2}{K}\ar[r]
&\oplus_{\mathfrak{p}\not\in S}k(\mathfrak{p})^\times\ar[r]
&0.\\}
\] 

The second statement follows from the fact that, for a global field $K$,  
$I^3(K)\cong \Z^{r(K)}$ where 
$r(K)$ is the number of distinct real embeddings of $K$. 
\end{proof}

\begin{exa}\label{exa:z}
Consider the global field $K=\Q$. 

For any set $S$ of prime numbers, we will set
\[
\Z_S:=\Z[\{ 1/p\}_{p\in S}]=\ntr{S\cup \{\infty\}}.
\]

The kernel of the surjective map 
\[
K_2(2,\Q)\to \oplus_{p}\F{p}^\times
\]
is an infinite cyclic direct summand with generator $c(-1,-1)$. 

It follows that for any set $S$ of prime numbers  
\[
\tilde{K}_2(2,\Z_S)\cong \Z\oplus \left(\oplus_{p\in S}\F{p}^\times\right). 
\]
\end{exa}

More generally, we have the following description of the groups $\tilde{K}_2(2,\ntr{S})$:

For a global field $K$, let $\Omega$ be the set of real embeddings of $K$. For 
$\sigma\in \Omega$, there is a corresponding homomorphism 
\[
T_\sigma:K_2(K)\to \mu_2,\quad \{ a,b\}\mapsto 
\left\{
\begin{array}{ll}
-1, &\mbox{ if } \sgn{\sigma(a)},\sgn{\sigma(b)}<0\\
1,&\mbox{ otherwise }
\end{array}
\right.
\]

Let 
\[
 K_2(K)_+:=\ker{\oplus_{\sigma\in\Omega}T_\sigma:K_2(K)\to\mu_2^\Omega}
\]
and let $K_2(\ntr{S})_+=K_2(\ntr{S})\cap K_2(K)_+$.

\begin{lem}\label{lem:tilde}
Let $K$ be a global field. 
Let $S$ be a nonempty set of primes of $K$ including the infinite primes. 
Then 
\[
 \tilde{K}_2(2,\ntr{S})\cong K_2(\ntr{S})_+\oplus \Z^{\Omega}.
\]
\end{lem}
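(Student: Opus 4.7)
The plan is to combine the exact sequence from the preceding lemma with an integer-valued ``signature'' refinement of the mod-$2$ obstruction $T=\oplus T_\sigma\colon K_2(\ntr{S})\to \mu_2^\Omega$. The starting point is
\[
0\to I^3(K)\to \tilde{K}_2(2,\ntr{S})\to K_2(\ntr{S})\to 0,
\]
with $I^3(K)\cong \Z^\Omega$ via total signature. I construct $\Sigma\colon\tilde{K}_2(2,\ntr{S})\to\Z^\Omega$ lifting $T$ and identifying $I^3(K)$ with a finite-index subgroup of the target; then $\ker\Sigma$ recovers $K_2(\ntr{S})_+$ and the resulting free quotient splits off.

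By Theorem \ref{thm:susmazz}, $K_2(2,\R)\cong \milk{2}{\R}\times_{I^2(\R)/I^3(\R)}I^2(\R)\cong \Z/2\times_{\Z/2}4\Z$ is infinite cyclic, generated by $c(-1,-1)$; I fix the isomorphism with $\Z$ so that the projection to $I^2(\R)=4\Z$ becomes multiplication by $4$. For each $\sigma\in\Omega$, define $\Sigma_\sigma$ as the composite
\[
\tilde{K}_2(2,\ntr{S})\hookrightarrow K_2(2,K)\xrightarrow{\sigma_*} K_2(2,\R)\cong \Z,
\]
and set $\Sigma=(\Sigma_\sigma)_\sigma$. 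On a symbol $c(u,v)$, $\Sigma_\sigma(c(u,v))$ is odd exactly when $\sigma(u),\sigma(v)<0$ (equivalently, when $T_\sigma(\{u,v\})=-1$), so the square
\[
\xymatrix@C=2em@R=1.7em{
\tilde{K}_2(2,\ntr{S}) \ar[r]^-{\Sigma} \ar@{->>}[d] & \Z^\Omega \ar@{->>}[d]^{\bmod 2}\\
K_2(\ntr{S}) \ar[r]^-{T} & \mu_2^\Omega
}
\]
commutes. The key local computation is that $\Sigma|_{I^3(K)}\colon I^3(K)\to 2\Z^\Omega$ is an isomorphism: under the Milnor-Witt description $I^3(K)$ sits in $K_2(2,K)$ as $\{(0,q):q\in I^3(K)\}$, so $\Sigma_\sigma(q)=\mathrm{sig}_\sigma(q)/4$, and $I^3(K)\cong\Z^\Omega$ via total signature means exactly that $I^3(K)$ is identified with $8\Z^\Omega$.

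A direct chase then gives $\ker\Sigma\cong K_2(\ntr{S})_+$: any lift $y\in\tilde{K}_2(2,\ntr{S})$ of $\bar y\in K_2(\ntr{S})_+$ has $\Sigma(y)\in 2\Z^\Omega=\Sigma(I^3(K))$, so subtracting the unique element of $I^3(K)$ with the same $\Sigma$-value produces an element of $\ker\Sigma$ still lifting $\bar y$, while injectivity of $\Sigma|_{I^3(K)}$ yields injectivity of $\ker\Sigma\to K_2(\ntr{S})$. Since the image of $\Sigma$ contains $2\Z^\Omega$, it has finite index in $\Z^\Omega$ and is therefore itself free abelian of rank $|\Omega|$; the short exact sequence
\[
0\to K_2(\ntr{S})_+\to \tilde{K}_2(2,\ntr{S})\to \mathrm{image}(\Sigma)\to 0
\]
therefore splits, yielding the desired decomposition. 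The main delicacy is the local computation pinning down $\Sigma|_{I^3(K)}$: the factor of $4$ coming from the Milnor-Witt fiber product description of $K_2(2,\R)$ is easy to slip up on. Once that is nailed down, the rest of the argument is formal.
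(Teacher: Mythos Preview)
Your argument is correct and is essentially the same approach as the paper's. Your map $\Sigma$ is, up to normalization, exactly the paper's map $\tilde{K}_2(2,\ntr{S})\to I^2(\R)^{\Omega}$ obtained from the real embeddings; the paper quotes \cite{hut:laurent} for the identification of the kernel of $K_2(2,K)\to I^2(\R)^{\Omega}$ with $K_2(K)_+$ and then intersects with $\tilde{K}_2(2,\ntr{S})$, whereas you re-derive the $\ntr{S}$-level statement $\ker\Sigma\cong K_2(\ntr{S})_+$ directly from the commuting square and the preceding exact sequence, but the content is the same.
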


\begin{proof}
By classical quadratic form theory, the group $I^n(\R)$ is infinite cyclic with generator 
$\pf{-1}^n=(-2)^{n-1}\pf{-1}$.
 
It is shown in \cite{hut:laurent} that for a global field $K$ the natural surjective 
map 
\[
K_2(2,K)\to I^2(\R)^{\Omega}\cong \Z^{\Omega},\quad c(u,v)\mapsto 
(\pf{\sgn{\sigma(u)}}\pf{\sgn{\sigma(u)}})_{\sigma\in \Omega} 
\]
has kernel isomorphic to $K_2(K)_+$, where this isomorphism is realised by 
restricting the natural map $K_2(2,K)\to K_2(K)$. Furthermore, the composite map 
$I^3(K)\to K_2(2,K)\to I^2(\R)^{\Omega}$ induces an isomorphism 
\[
I^3(K)\cong I^3(\R)^{\Omega}=2\cdot(I^2(\R)^{\Omega}).
\]

Since $I^3(K)\subset \tilde{K}_2(2,\ntr{S})$,  the image of the map 
\[
\tilde{K}_2(2,\ntr{S})\to I^2(\R)^{\Omega}\cong \Z^{\Omega}
\]
contains a full sublattice.  

On the other hand, the kernel of this map is isomorphic -- via the map 
$K_2(2,K)\to K_2(K)$ -- to $K_2(\ntr{S})\cap K_2(K)_+$.  
\end{proof}

It is natural to ask, of course, about the relation between $\tilde{K}_2(2,\ntr{S})$ 
and $K_2(2,\ntr{S})$.

 It is a theorem of van der Kallen (\cite{vanderkallen:stab}) that when 
$K$ is a global field and when 
$S$ contains all infinite places and $\card{S}\geq 2$ then the stabilization map 
\[
K_2(2,\ntr{S})\to K_2(\ntr{S})
\]
is always surjective. 

We deduce:

\begin{lem} Let $K$ be a global field and let $S$ be a nonempty set of primes of $K$ containing 
the infinite primes.
Then the  image of the natural map $K_2(2,\ntr{S})\to K_2(2,K)\cong K_2(2,K)$ lies in 
$\tilde{K}_2(2,\ntr{S})$. 

Furthermore, when $\card{S}\geq 2$, and when there exist units 
$u_\sigma\in \ntr{S}^\times$, $\sigma\in \Omega$ satisfying 
\[
\sgn{\tau(u_\sigma)}=(-1)^{\delta_{\sigma,\tau}},
\]
the resulting natural map $K_2(2,\ntr{S})\to
\tilde{K}_2(2,\ntr{S})$ is surjective; i.e. the image of the map 
$K_2(2,\ntr{S})\to K_2(2,K)$ is precisely $\tilde{K}_2(2,\ntr{S})$.
\end{lem}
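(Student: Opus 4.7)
The first assertion is a functoriality check. The stabilization $K_2(2,-) \to K_2(-)$ gives a commutative square with horizontal rows $K_2(2,\ntr{S}) \to K_2(2,K)$ and $K_2(\ntr{S}) \hookrightarrow \milk{2}{K}$, so for any $\mathfrak{p} \notin S$ the composite $K_2(2,\ntr{S}) \to K_2(2,K) \twoheadrightarrow \milk{2}{K} \to k(\mathfrak{p})^\times$ (the tame symbol $\tau_\mathfrak{p}$) factors through $K_2(\ntr{S}) \to k(\mathfrak{p})^\times$, which vanishes by the localisation sequence in Milnor $K$-theory. Hence the image of $K_2(2,\ntr{S})$ in $K_2(2,K)$ lies in $\tilde{K}_2(2,\ntr{S})$.

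For the surjectivity, write $G$ for this image and use the splitting $\tilde{K}_2(2,\ntr{S}) \cong K_2(\ntr{S})_+ \oplus \Z^\Omega$ of Lemma \ref{lem:tilde}, the first summand being the canonical kernel of the sign map $\tilde{K}_2(2,\ntr{S}) \twoheadrightarrow L := I^2(\R)^\Omega$. The plan is to show that $G$ contains each summand. For the $\Z^\Omega$ summand, observe that the symbols $c(u_\sigma,-1) \in K_2(2,\ntr{S})$ project under the sign map to $\pf{\sgn{\tau(u_\sigma)}}\pf{-1}$, which equals $\pf{-1}^2$ at $\tau = \sigma$ and $\pf{1}\pf{-1} = 0$ at $\tau \neq \sigma$. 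These form a $\Z$-basis of $L$, so they define a section $\phi \colon \Z^\Omega \to G$ of the sign map whose image exhausts the second summand.

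For the $K_2(\ntr{S})_+$ summand, I will show that $K_2(\ntr{S})_+$ is generated as an abelian group by admissible symbols $\{a,b\}$, meaning $a,b \in \ntr{S}^\times$ with the property that at each $\tau \in \Omega$ not both $\sgn{\tau(a)}$ and $\sgn{\tau(b)}$ are negative. For any admissible symbol, the sign image of $c(a,b) \in K_2(2,\ntr{S})$ at each real place vanishes (since one factor is $\pf{1} = 0$), so $c(a,b)$ lies in the canonical copy of $K_2(\ntr{S})_+$ inside $\tilde{K}_2(2,\ntr{S})$ and maps to $\{a,b\}$ under stabilization. Hence every admissible generator of $K_2(\ntr{S})_+$ lies in $G$, and together with the section constructed above this yields $G = \tilde{K}_2(2,\ntr{S})$.

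The combinatorial heart of the proof, and the main obstacle, is the admissible-generation claim. Given $\alpha \in K_2(\ntr{S})_+$, van der Kallen's theorem gives an expression $\alpha = \sum_i \{a_i, b_i\}$ with $a_i, b_i \in \ntr{S}^\times$. For any inadmissible term with both entries negative at $\sigma$, the bilinear identity $\{a,b\} = \{au_\sigma, b\} - \{u_\sigma, b\}$ flips the sign of $a$ at $\sigma$ only (since $u_\sigma$ is negative only at $\sigma$), producing a symbol with one fewer bad place plus a residual $\{u_\sigma, b\}$ bad only at $\sigma$. A second reduction $\{u_\sigma, b\} = \{u_\sigma, bu_\sigma^{-1}\} + \{u_\sigma, u_\sigma\}$ for $b < 0$ at $\sigma$ replaces this by an admissible symbol and the Milnor relation $\{u_\sigma, u_\sigma\} = \{u_\sigma,-1\}$, still bad at $\sigma$. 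The assumption $\alpha \in K_2(\ntr{S})_+$ forces the total multiplicity of $\{u_\sigma,-1\}$ to be even for each $\sigma$, and consecutive pairs combine by bilinearity into the admissible symbol $\{u_\sigma^2,-1\}$, since $u_\sigma^2$ is totally positive. Making this induction precise completes the proof.
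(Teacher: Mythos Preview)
Your proof of the first assertion is correct and is exactly the paper's argument: the commuting square with the localisation sequence forces the image into the kernel of all tame symbols outside $S$.

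For the second assertion, your treatment of the $\Z^\Omega$ summand via the symbols $c(u_\sigma,-1)$ is fine and matches the paper's intent. The problem is the step where you invoke van der Kallen to write an arbitrary $\alpha\in K_2(\ntr{S})_+$ as $\sum_i\{a_i,b_i\}$ with $a_i,b_i\in\ntr{S}^\times$. Van der Kallen's theorem, as cited in the paper, is only the surjectivity of the stabilisation map $K_2(2,\ntr{S})\to K_2(\ntr{S})$; it does \emph{not} assert that $K_2(2,\ntr{S})$ (or its image) is generated by Steinberg symbols $c(a,b)$. For a general ring the subgroup $C(2,A)$ generated by symbols can be strictly smaller than $K_2(2,A)$, and the analogous statement that $K_2(\ntr{S})$ is generated by Steinberg symbols in $S$-units is a separate (and more delicate) claim, not supplied by the reference. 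Your entire admissible-symbol reduction hangs on this unproved input.

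The paper avoids this issue by arguing at the level of the two short exact sequences rather than with explicit symbols. Van der Kallen gives that the image $G$ surjects onto $K_2(\ntr{S})$ under $\tilde{K}_2(2,\ntr{S})\to K_2(\ntr{S})$; combined with your surjection onto $I^2(\R)^\Omega$, one finishes by checking that $I^3(K)\subset G$. This last point is immediate from your own construction: the product $c(u_\sigma,-1)\,c(-1,u_\sigma)$ lies in $G$, maps to $\{u_\sigma,-1\}\{-1,u_\sigma\}=0$ in $K_2(\ntr{S})$ (hence lies in $I^3(K)$), and has sign image $2\pf{-1}^2\cdot e_\sigma$; since $I^3(K)\hookrightarrow I^2(\R)^\Omega$ identifies $I^3(K)$ with $2\Z^\Omega$, these elements form a $\Z$-basis of $I^3(K)$. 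Then the exact sequence $0\to I^3(K)\to\tilde{K}_2(2,\ntr{S})\to K_2(\ntr{S})\to 0$ forces $G=\tilde{K}_2(2,\ntr{S})$. Replacing your symbol-generation paragraph with this two-line argument repairs the proof.
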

\begin{proof}
The diagram 
\[
\xymatrix{
&{K}_2(2,\ntr{S})\ar[r]\ar[d]
&K_2(2,K)\ar[r]\ar[d]
&\oplus_{\mathfrak{p}\not\in S}k(\mathfrak{p})^\times\ar^-{\mathrm{id}}[d]
&\\
0\ar[r]
&K_2(\ntr{S})\ar[r]
&\milk{2}{K}\ar[r]
&\oplus_{\mathfrak{p}\not\in S}k(\mathfrak{p})^\times\ar[r]
&0\\}
\]
commutes.

Our hypothesis on units ensures that the map 
\[
\tilde{K}_2(2,\ntr{S})\to K_2(2,K)\to I^2(\R)^{\Omega}
\]
is surjective.

Since we also have
\[
K_2(\ntr{S})_+\subset K_2(\ntr{S})\subset \tilde{K}_2(2,\ntr{S})
\]
by the result of van der Kallen, the second statement follows.

\end{proof}

One would expect that the resulting map $K_2(2,\ntr{S})\to \tilde{K}_2(2,\ntr{S})$ is very 
often an isomorphism. It seems to be difficult, however, to prove this in any given instance. 
In the case $K=\Q$, Jun Morita, \cite[Theorems 2,3]{morita:zs} has proved:

\begin{thm}\label{thm:morita}
Let $S$ be any of the following sets of primes numbers:

 $S=\{p_1,\ldots, p_n\}$,  the set of the first $n$ successive prime numbers, or
 $S$ is one of $\{2,5\}$, $\{ 2,3,7\}$, $\{ 2,3,11\}$, $\{ 2,3,5,11\}$, $\{ 2,3,13\}$, 
$\{ 2,3,7,13\}$, $\{2,3,17\}$, $\{ 2,3,5,19\}$.    

Then $K_2(2,\Z_S)$ is central in $\st{2}{\Z_S}$ and the natural map 
\[
K_2(2,\Z_S)\to \tilde{K}_2(2,\Z_S)\cong \Z\oplus \left(\oplus_{p\in S}\F{p}^\times\right)
\]
is an isomorphism. 
\end{thm}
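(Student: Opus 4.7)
The plan is to prove the two assertions -- centrality of $K_2(2,\Z_S)$ in $\st{2}{\Z_S}$ and the isomorphism with $\tilde{K}_2(2,\Z_S)$ -- by a Matsumoto-style reduction to symbols of units. First I would show that $K_2(2,\Z_S) = C(2,\Z_S)$, the subgroup generated by the symbols $c(u,v)$ for $u,v\in\Z_S^\times$. Since every such symbol is central in $\st{2}{\Z_S}$ by the defining Steinberg relations (cf.\ Proposition \ref{prop:mat} together with the formula $c(u,v) = h_{12}(u)h_{12}(v)h_{12}(uv)^{-1}$), establishing this equality gives the centrality of $K_2(2,\Z_S)$ in $\st{2}{\Z_S}$ at once. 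The reduction itself uses the hypothesis that there is a $\lambda\in\Z_S^\times$ with $\lambda^2-1\in\Z_S^\times$, which holds for each $S$ in the list (e.g.\ $\lambda=3$ whenever $\{2\}\subset S$), together with the Vaserstein--Liehl theorem (Theorem \ref{thm:vl}), to guarantee that $\spl{2}{\Z_S}=E_2(\Z_S)$ is perfect and to produce a Matsumoto-type presentation of the Steinberg kernel.

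Next I would verify injectivity of the natural map $K_2(2,\Z_S)\to \tilde{K}_2(2,\Z_S)$, which is already surjective by the lemma preceding the theorem (since $-1\in\Z_S^\times$ suffices to detect the unique real signature of $\Q$). By Example \ref{exa:z} the target is $\Z\oplus\bigoplus_{p\in S}\F{p}^\times$: the $\Z$-factor is detected by $c(-1,-1)$ pulled back from $K_2(2,\Q)$, and for each $p\in S$ the $\F{p}^\times$-factor is detected by the tame symbol, represented on the $K_2(2,\Z_S)$ side by symbols $c(p,u)$ with $\bar u$ a generator of $\F{p}^\times$. Once the first step is in hand, injectivity reduces to showing that the only relations among the symbols $c(u,v)$ with $u,v\in\Z_S^\times$ are the universal ones recorded in Proposition \ref{prop:mat}.

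This last step is where I expect the main obstacle, and it is what forces the statement to be restricted to the specific list of $S$. For $S=\{p_1,\ldots,p_n\}$ the natural approach is induction on $n$, combining the tame-symbol sequence of Theorem \ref{thm:mv} (applied to $\spl{2}$) with the base case $S=\{2,3\}$; at each step one must check that adjoining a new prime contributes exactly the expected $\F{p}^\times$-factor and introduces no Dennis--Stein-type syzygy among existing symbols. For the sporadic sets such as $\{2,3,7\}$ or $\{2,3,5,19\}$, the group $\Z_S^\times$ has small rank, so one can enumerate directly the pairs $(u,1-u)$ with both $u,1-u\in\Z_S^\times$ and verify by hand that every relation $c(u,1-u)=1$ follows from Matsumoto's five relations, leaving no unaccounted elements in the kernel. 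The obstacle is entirely arithmetic: showing that these particular $\Z_S$ contain enough $(u,1-u)$-pairs of units to exhaust the Steinberg relations without producing extra ones is a delicate case analysis, and it is precisely the content of Morita's calculations that the chosen $S$ have this property.
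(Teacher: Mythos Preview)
The paper does not prove Theorem~\ref{thm:morita}; it is quoted verbatim from Morita's article \cite[Theorems~2,~3]{morita:zs} and used as a black box (see the sentence introducing the theorem and the subsequent deduction of Proposition~\ref{prop:morita}). There is therefore no ``paper's own proof'' to compare your proposal against.

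That said, your outline is a reasonable sketch of the shape such an argument must take, and you have correctly identified that the entire difficulty lies in the arithmetic case analysis at the end --- precisely the content of Morita's paper. A couple of points where your sketch is looser than it should be: first, the reduction $K_2(2,\Z_S)=C(2,\Z_S)$ is not an immediate consequence of Vaserstein--Liehl plus the existence of $\lambda$ with $\lambda^2-1\in\Z_S^\times$; Matsumoto-type presentations for $K_2(2,-)$ over rings are genuinely delicate (unlike the field case), and Morita's argument works rather by explicit manipulations in $\st{2}{\Z_S}$ specific to the unit groups at hand. Second, your inductive scheme via Theorem~\ref{thm:mv} is a statement about $\hoz{2}{\spl{2}{-}}$, not about $K_2(2,-)$ directly, so it does not by itself control the Steinberg-group kernel; the paper in fact runs the logic the other way, using Morita's computation of $K_2(2,\Z_S)$ together with its own computation of $\hoz{2}{\spl{2}{\Z_S}}$ to conclude in Proposition~\ref{prop:morita} that the two agree.
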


\begin{lem}\label{lem:k2o}
Let $K$ be a global field and let 
$S$ be a nonempty set of primes of $K$ containing the infinite primes. Then the image of the 
natural map
\[
\xymatrix{
\hoz{2}{\spl{2}{\ntr{S}}}\to \hoz{2}{\spl{2}{K}}\ar^-{\cong}[r]
&K_2(2,K)
}
\]
lies in $\tilde{K}_2(2,\ntr{S})$.
\end{lem}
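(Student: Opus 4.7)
The plan is to reduce, prime by prime, to a statement that follows immediately from the Mayer--Vietoris calculation already done in Section 5, together with Proposition \ref{prop:delta}. So fix a class $z \in \hoz{2}{\spl{2}{\ntr{S}}}$, denote its image in $K_2(2,K)\cong \hoz{2}{\spl{2}{K}}$ by $\bar z$, and fix a prime $\mathfrak{p}\notin S$. By the very definition of $\tilde K_2(2,\ntr{S})$, it suffices to show $T_{\mathfrak{p}}(\bar z)=1$ for each such $\mathfrak{p}$.

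The key observation is that, since $\mathfrak{p}\notin S$, every element of $\ntr{S}$ is $\mathfrak{p}$-integral, so $\ntr{S}\subseteq (\ntr{S})_{\mathfrak{p}}=:A_{\mathfrak{p}}$, a discrete valuation ring with field of fractions $K$ and residue field $k(\mathfrak{p})$. Consequently the natural map factors as
\[
\hoz{2}{\spl{2}{\ntr{S}}}\;\longrightarrow\;\hoz{2}{\spl{2}{A_{\mathfrak{p}}}}\;\longrightarrow\;\hoz{2}{\spl{2}{K}},
\]
so $\bar z$ lies in the image of $\hoz{2}{\spl{2}{A_{\mathfrak{p}}}}\to\hoz{2}{\spl{2}{K}}$.

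Now apply the Mayer--Vietoris sequence associated to the amalgamated product decomposition (\ref{amal1}),
\[
\spl{2}{K}\;=\;\spl{2}{A_{\mathfrak{p}}}\star_{\tcong{A_{\mathfrak{p}}}{\tilde{\mathfrak{p}}}} H(\tilde{\mathfrak{p}}),
\]
which gives the exact piece
\[
\hoz{2}{\spl{2}{A_{\mathfrak{p}}}}\oplus\hoz{2}{H(\tilde{\mathfrak{p}})}\;\stackrel{\beta}{\longrightarrow}\;\hoz{2}{\spl{2}{K}}\;\stackrel{\delta}{\longrightarrow}\;\hoz{1}{\tcong{A_{\mathfrak{p}}}{\tilde{\mathfrak{p}}}}.
\]
Exactness forces $\delta(\bar z)=0$, and therefore $\rho_*(\delta(\bar z))=0$ in $\hoz{1}{k(\mathfrak{p})^\times}\cong k(\mathfrak{p})^\times$.

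Finally, Proposition \ref{prop:delta} identifies the composite $\rho_*\circ\delta$ with the map $c(a,b)\mapsto (-1)^{v(a)v(b)}b^{v(a)}/a^{v(b)}\pmod{\tilde{\mathfrak{p}}}$ on $K_2(2,K)\cong\hoz{2}{\spl{2}{K}}$; this is exactly $T_{\mathfrak{p}}$ (since $T_{\mathfrak{p}}$ is the composite of $K_2(2,K)\twoheadrightarrow \milk{2}{K}$ with the tame symbol $\tau_{\mathfrak{p}}$). Hence $T_{\mathfrak{p}}(\bar z)=1$, as desired. Since this holds for every $\mathfrak{p}\notin S$, $\bar z$ lies in $\tilde K_2(2,\ntr{S})$. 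No genuine obstacle arises beyond noticing the localization step; the main ingredients --- Matsumoto--Moore, the Mayer--Vietoris sequence, and Proposition \ref{prop:delta} --- are already in hand, and in particular the hypothesis on units in $\ntr{S}^\times$ needed for Theorem \ref{thm:mv} is not required here because we work with the local DVR $A_{\mathfrak{p}}$ rather than $\ntr{S}$ itself.
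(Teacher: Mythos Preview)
Your proof is correct, but it takes a genuinely different route from the paper's.

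The paper argues by passing to the \emph{stable} picture: the commutative square
\[
\xymatrix{
\hoz{2}{\spl{2}{\ntr{S}}}\ar[r]\ar[d] & \hoz{2}{\mathrm{SL}(\ntr{S})}\ar[d]\\
\hoz{2}{\spl{2}{K}}\ar[r] & \hoz{2}{\mathrm{SL}(K)}
}
\]
together with the identifications $\hoz{2}{\mathrm{SL}(\ntr{S})}\cong K_2(\ntr{S})$ and $\hoz{2}{\mathrm{SL}(K)}\cong \milk{2}{K}$, and the classical localization sequence for $K_2$, immediately give that the image lands in the kernel of $\oplus_{\mathfrak p\notin S}\tau_{\mathfrak p}$. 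In other words, the paper outsources the vanishing of the tame symbols to stable $K$-theory.

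Your argument stays entirely in rank two: you localize at each $\mathfrak p\notin S$, factor through $\hoz{2}{\spl{2}{A_{\mathfrak p}}}$, and then invoke the Mayer--Vietoris sequence for the amalgamated product (\ref{amal1}) together with Proposition~\ref{prop:delta} to conclude that $T_{\mathfrak p}$ vanishes on the image. This is more self-contained --- it uses only the machinery developed in Section~5 of the paper and avoids appealing to the stable isomorphism $\hoz{2}{\mathrm{SL}(\ntr{S})}\cong K_2(\ntr{S})$ --- at the cost of being slightly longer. The paper's approach is quicker if one is willing to import the localization sequence for $K_2$; yours has the virtue of keeping everything at the level of $\spl{2}{-}$, which is arguably more in keeping with the theme of the paper.
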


\begin{proof}
The diagram 
\[
\xymatrix{
\hoz{2}{\spl{2}{\ntr{S}}}\ar[r]\ar[d]
&\hoz{2}{\mathrm{SL}(\ntr{S})}\ar[d]\\
\hoz{2}{\spl{2}{K}}\ar[r]\ar^-{\cong}[d]
&\hoz{2}{\mathrm{SL}(K)}\ar^-{\cong}[d]\\
K_2(2,K)\ar[r]
&\milk{2}{K}\\
}
\]
commutes. But $\hoz{2}{\mathrm{SL}(\ntr{S})}\cong K_2(\ntr{S})$ and the natural map 
$K_2(\ntr{S})\to K_2(K)=\milk{2}{K}$ induces an isomorphism 
\[
K_2(\ntr{S})\cong\ker{\milk{2}{k}\to \oplus_{\mathfrak{p}\not\in S}k(\mathfrak{p})^\times}.
\]
\end{proof}

If $K$ is a global field and and if $S$ is a nonempty set of primes containing the infinite primes
we will let 
\[
\mathcal{K}_S:=\ker{\hoz{2}{\spl{2}{\ntr{S}}}\to \hoz{2}{\spl{2}{K}}}.
\]

Note that 
\[
\mathcal{K}_S:=\ker{\hoz{2}{\spl{2}{\ntr{S}}}\to \tilde{K}_2(2,\ntr{S})}
\]
since $\tilde{K}_2(2,\ntr{S})\subset K_2(2,K)\cong \hoz{2}{\spl{2}{K}}$. 

\begin{rem}
In general, the kernels $\mathcal{K}_S$ can be arbitrarily large, even in the case $K=\Q$: 

The calculations of Adem-Naffah, \cite{adem:naffah}, show that the ranks of the groups 
$\hoz{2}{\spl{2}{\Z[1/p]}}$ grow with linearly $p$ when $p$ is a prime number.  On the other 
hand, the rank of $\hoz{2}{\spl{2}{\Q}}$ is $1$. 
\end{rem}

\begin{lem}\label{lem:h2o}
Let $K$ be a global field. Let $S$ be a set of primes of $K$ containing the infinite primes. Suppose 
that $\card{S}\geq 2$ and that $\ntr{S}$ contains a unit $\lambda$ such that $\lambda^2-1$ is also a unit.

Then 
\begin{enumerate}
\item The natural map 
\[
\hoz{2}{\spl{2}{\ntr{S}}}\to\tilde{K}_2(2,\ntr{S})
\]
is surjective.
\item If $T\supset S$, then the natural map 
$\mathcal{K}_S\to\mathcal{K}_T$ 
is surjective.
\end{enumerate}
\end{lem}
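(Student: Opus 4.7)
The plan for part (1) is to combine Corollary~\ref{cor:mv2} with the Matsumoto--Moore isomorphism (Theorem~\ref{thm:mm}). Corollary~\ref{cor:mv2} yields an exact sequence
\[
\hoz{2}{\spl{2}{\ntr{S}}}\to \hoz{2}{\spl{2}{K}}\to \oplus_{\mathfrak{p}\not\in S}k(\mathfrak{p})^\times\to 0.
\]
Under the isomorphism $\hoz{2}{\spl{2}{K}}\cong K_2(2,K)$, the commutative diagram of Theorem~\ref{thm:mv} (taken in the colimit that produced Corollary~\ref{cor:mv2}) identifies the second map with the total tame symbol. By the very definition of $\tilde{K}_2(2,\ntr{S})$, the kernel of this map is $\tilde{K}_2(2,\ntr{S})$. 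Hence the image of $\hoz{2}{\spl{2}{\ntr{S}}}$ in $K_2(2,K)$ is exactly $\tilde{K}_2(2,\ntr{S})$; combined with Lemma~\ref{lem:k2o}, which places that image inside $\tilde{K}_2(2,\ntr{S})$, this gives surjectivity.

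For part (2), I would apply the snake lemma to the ladder
\[
\xymatrix{
0\ar[r] & \mathcal{K}_S\ar[r]\ar[d] & \hoz{2}{\spl{2}{\ntr{S}}}\ar[r]\ar[d] & \tilde{K}_2(2,\ntr{S})\ar[r]\ar[d] & 0\\
0\ar[r] & \mathcal{K}_T\ar[r] & \hoz{2}{\spl{2}{\ntr{T}}}\ar[r] & \tilde{K}_2(2,\ntr{T})\ar[r] & 0.
}
\]
Both rows are short exact by part (1), noting that $T\supset S$ ensures $|T|\geq 2$ and that $\lambda,\lambda^2-1\in \ntr{S}^\times\subset \ntr{T}^\times$, so the hypotheses transfer to $T$. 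The kernel of the right-hand vertical is zero since $\tilde{K}_2(2,\ntr{S})\hookrightarrow \tilde{K}_2(2,\ntr{T})$ is injective (both sit inside $K_2(2,K)$), and the snake lemma therefore reduces the claim to showing that the induced map of cokernels
\[
\coker\bigl(\hoz{2}{\spl{2}{\ntr{S}}}\to \hoz{2}{\spl{2}{\ntr{T}}}\bigr)\longrightarrow
\coker\bigl(\tilde{K}_2(2,\ntr{S})\hookrightarrow \tilde{K}_2(2,\ntr{T})\bigr)
\]
is injective (indeed then $\coker(\mathcal{K}_S\to \mathcal{K}_T)=0$).

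Both cokernels will turn out to be $\oplus_{\mathfrak{p}\in T\setminus S}k(\mathfrak{p})^\times$, with the induced map the identity. The left cokernel is identified via Corollary~\ref{cor:mv}. For the right cokernel, I would observe that the restriction-of-tame-symbols map $\tilde{K}_2(2,\ntr{T})\to \oplus_{\mathfrak{p}\in T\setminus S}k(\mathfrak{p})^\times$ is surjective: factor it through the natural surjection $\tilde{K}_2(2,\ntr{T})\twoheadrightarrow K_2(\ntr{T})$ coming from Corollary~\ref{cor:mwk}, and then use the classical surjectivity of $K_2(\ntr{T})\to \oplus_{\mathfrak{p}\in T\setminus S}k(\mathfrak{p})^\times$. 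The kernel of this surjection is $\tilde{K}_2(2,\ntr{S})$ by definition, giving the claimed identification. Both maps of cokernels are induced by the same tame symbols at the primes in $T\setminus S$, so the comparison map is the identity, and the snake lemma concludes.

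The only mild obstacle is the bookkeeping to confirm that the tame-symbol identifications in Corollary~\ref{cor:mv} and in the definition of $\tilde{K}_2$ agree on the nose (so that the diagram of cokernels commutes and yields the identity); once this compatibility — which is inherited from Theorem~\ref{thm:mv} and the defining diagram for $\tilde{K}_2(2,\ntr{\bullet})$ — is in place, the argument is an entirely formal snake-lemma chase.
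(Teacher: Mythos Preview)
Your proof is correct and, for part (1), identical to the paper's. For part (2) you and the paper apply the snake lemma to the same underlying commutative square, but with the rows and columns transposed: the paper takes as its two rows the exact sequences
\[
\hoz{2}{\spl{2}{\ntr{S}}}\to \hoz{2}{\spl{2}{\ntr{T}}}\to \bigoplus_{\mathfrak{p}\in T\setminus S}k(\mathfrak{p})^\times\to 0
\quad\text{and}\quad
0\to \tilde{K}_2(2,\ntr{S})\to \tilde{K}_2(2,\ntr{T})\to \bigoplus_{\mathfrak{p}\in T\setminus S}k(\mathfrak{p})^\times\to 0,
\]
so the right vertical is the identity and the conclusion $\mathcal{K}_S\twoheadrightarrow\mathcal{K}_T$ drops out immediately from $\ker(\mathrm{id})=0$. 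Your orientation forces you to compute the two cokernels and verify that the induced map between them is injective---which is exactly the content of the paper's diagram read the other way. Both arguments use precisely the same ingredients (part (1), Corollary~\ref{cor:mv}, and the compatibility from Theorem~\ref{thm:mv}); the paper's orientation is just a line shorter.
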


\begin{proof}\

\begin{enumerate}
\item By Corollary \ref{cor:mv2}, we have a commutative diagram with exact rows
\[
\xymatrix{
&\hoz{2}{\spl{2}{\ntr{S}}}\ar[r]\ar[d]
&\hoz{2}{\spl{2}{K}}\ar[r]\ar^-{\cong}[d]
&\oplus_{\mathfrak{p}\not\in S}k(\mathfrak{p})^\times\ar[r]\ar^-{\mathrm{id}}[d]
&0\\
0\ar[r]
&\tilde{K}_2(2,\ntr{S})\ar[r]
&K_2(2,K)\ar[r]
&\oplus_{\mathfrak{p}\not\in S}k(\mathfrak{p})^\times\ar[r]
&0.\\
}
\]
\item
Apply the snake lemma to the diagram
\[
\xymatrix{
&\hoz{2}{\spl{2}{\ntr{S}}}\ar[r]\ar[d]
&\hoz{2}{\spl{2}{\ntr{T}}}\ar[r]\ar[d]
&\oplus_{\mathfrak{p}\in T\setminus S}k(\mathfrak{p})^\times\ar[r]\ar^-{\mathrm{id}}[d]
&0\\
0\ar[r]
&\tilde{K}_2(2,\ntr{S})\ar[r]
&\tilde{K}_2(2,\ntr{T})\ar[r]
&\oplus_{\mathfrak{p}\in T\setminus S}k(\mathfrak{p})^\times\ar[r]
&0.\\
}
\]
\end{enumerate}
\end{proof}

\begin{rem} 
Note, on the other hand, that the map 
\[
0=\hoz{2}{\spl{2}{\Z}} \to \tilde{K}_2(2,\Z)\cong K_2(2,\Z)\cong \Z
\]
cannot be surjective.
\end{rem}

\begin{thm}\label{thm:s}
 Let $K$ be a global field. 
\begin{enumerate}
\item There exists a finite set  $S$ of primes of $K$ satisfying 
\begin{enumerate}
\item $S$ contains all infinite primes and $\card{S}\geq 2$.
\item There exists a unit $\lambda$ of $\ntr{S}$ such that $\lambda^2-1$ is also a unit.
\item The natural map $\hoz{2}{\spl{2}{\ntr{S}}}\to \tilde{K}_2(2,\ntr{S})$ is an isomorphism. 
\end{enumerate}

\item If $T$ is any set of primes containing $S$ then 
$\hoz{2}{\spl{2}{\ntr{T}}}\cong \tilde{K}_2(2,\ntr{T})$; i.e. there is a natural short 
exact sequence
\[
\xymatrix{
0\ar[r]
& \hoz{2}{\spl{2}{\ntr{T}}}\ar[r]
& K_2(2,K)\ar^-{\sum{T_{\mathfrak{p}}}}[r] 
&\oplus_{\mathfrak{p}\not\in T}k(\mathfrak{p})^\times\ar[r]
& 0.\\
}
\] 
\end{enumerate}
\end{thm}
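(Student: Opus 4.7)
The plan is to first establish the existence of the nice set $S$ in part (1) via a finite generation argument, and then deduce part (2) as a formal consequence of the constructions already developed.

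For part (1), I would begin by picking any $\lambda\in K^\times\setminus\{0,\pm 1\}$ and letting $S_0$ be the finite set consisting of the infinite primes of $K$ together with all primes at which $\lambda$ or $\lambda^2-1$ has nonzero valuation; enlarging $S_0$ further if needed, one can also ensure $\card{S_0}\geq 2$. Then $S_0$ satisfies (a) and (b), and these conditions persist under further enlargement. Since Lemma \ref{lem:h2o}(1) already gives surjectivity of $\hoz{2}{\spl{2}{\ntr{S}}}\to \tilde{K}_2(2,\ntr{S})$ for any $S\supseteq S_0$, clause (c) amounts to proving $\mathcal{K}_S=0$ for some such $S$.

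The key input is that $\hoz{2}{\spl{2}{\ntr{S_0}}}$ is a finitely generated abelian group (the $S$-arithmetic group $\spl{2}{\ntr{S_0}}$ has type $\mathrm{FP}_2$ by standard results, and one can alternatively obtain finite generation inductively from Theorem \ref{thm:mv} starting from a base case $\card{S_0}=2$); in particular its subgroup $\mathcal{K}_{S_0}$ is finitely generated. By definition of $\mathcal{K}_{S_0}$, each of its elements maps to zero in $\hoz{2}{\spl{2}{K}}$, and since group homology commutes with filtered colimits and $K=\colim_{T\supseteq S_0}\ntr{T}$, we have $\hoz{2}{\spl{2}{K}}=\colim_{T\supseteq S_0}\hoz{2}{\spl{2}{\ntr{T}}}$. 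Choosing finitely many generators $z_1,\dots,z_n$ of $\mathcal{K}_{S_0}$, each $z_i$ already dies at some finite level $T_i\supseteq S_0$, and $S:=T_1\cup\cdots\cup T_n$ is a finite set for which the image of $\mathcal{K}_{S_0}$ in $\hoz{2}{\spl{2}{\ntr{S}}}$ is zero. This image lands in $\mathcal{K}_S$, and Lemma \ref{lem:h2o}(2) tells us $\mathcal{K}_{S_0}\twoheadrightarrow \mathcal{K}_S$ is surjective, so $\mathcal{K}_S=0$. Combined with Lemma \ref{lem:h2o}(1), this yields the isomorphism in (c).

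Part (2) is then formal: for $T\supseteq S$, Lemma \ref{lem:h2o}(2) yields a surjection $\mathcal{K}_S\twoheadrightarrow \mathcal{K}_T$, so $\mathcal{K}_T=0$; combined with surjectivity from Lemma \ref{lem:h2o}(1), one obtains $\hoz{2}{\spl{2}{\ntr{T}}}\cong \tilde{K}_2(2,\ntr{T})$, and the displayed short exact sequence is then just the defining exact sequence of $\tilde{K}_2(2,\ntr{T})$ as the kernel of $\sum T_{\mathfrak p}\colon K_2(2,K)\to \oplus_{\mathfrak p\not\in T}k(\mathfrak p)^\times$. The main obstacle in the plan is the finite generation of $\mathcal{K}_{S_0}$; once this is granted (which is a classical but nontrivial fact about $S$-arithmetic groups), the rest of the argument is a short diagram chase resting entirely on the surjectivity statements of Lemma \ref{lem:h2o} and the filtered colimit description of $\hoz{2}{\spl{2}{K}}$.
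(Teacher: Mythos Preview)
Your proposal is correct and follows essentially the same route as the paper: start with any $S_0$ satisfying (a) and (b), use finite generation of $\mathcal{K}_{S_0}$ together with the colimit description of $\hoz{2}{\spl{2}{K}}$ to find a finite $S\supseteq S_0$ in which $\mathcal{K}_{S_0}$ dies, and then invoke the surjectivity of $\mathcal{K}_{S_0}\to\mathcal{K}_S$ from Lemma~\ref{lem:h2o}(2) to conclude $\mathcal{K}_S=0$; part (2) is then immediate from the same lemma. The only differences are cosmetic: you spell out how to produce $S_0$ and flag the finite generation of $\hoz{2}{\spl{2}{\ntr{S_0}}}$ as the one nontrivial external input, whereas the paper simply asserts both.
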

\begin{proof}\ 

\begin{enumerate}
\item Let $S_0$ be any set of primes satisfying (a) and (b). Since $\hoz{2}{\spl{2}{\ntr{S}}}$
is a finitely-generated abelian group, so also is  $\mathcal{K}_{S_0}$. Since 
$\hoz{2}{\spl{2}{K}}=\lim_T\hoz{2}{\spl{2}{\ntr{T}}}$, the limit being taken over finite sets $T$ 
of primes, it follows that there is a finite 
set of primes $S$ containing $S_0$ for which 
\[
\mathcal{K}_{S_0}=\ker{\hoz{2}{\spl{2}{\ntr{S_0}}}\to\hoz{2}{\spl{2}{\ntr{S}}}}. 
\]

By Lemma \ref{lem:h2o}  it follows that $\mathcal{K}_S=0$ and hence that 
$\hoz{2}{\spl{2}{\ntr{S}}}\cong \tilde{K}_2(2,\ntr{S})$ as required. 
\item This is immediate from Lemma \ref{lem:h2o}.
\end{enumerate}
\end{proof}

\begin{lem}\label{lem:z}
Let $K=\Q$ and let $S=\{ 2,3,\infty\}$. Then $S$ satisfies conditions (a)-(c) of 
Theorem \ref{thm:s} (1).
\end{lem}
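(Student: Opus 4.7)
The plan is to verify the three conditions (a), (b), (c) in turn, with essentially all the work going into condition (c).

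Conditions (a) and (b) are immediate. For (a), $S$ contains $\infty$ and $\card{S}=3\geq 2$. For (b), take $\lambda=2\in \Z[1/6]^\times$; then $\lambda^2-1=3$ is also a unit in $\Z[1/6]$. (Alternatively, $\lambda=3$ gives $\lambda^2-1=8$, a unit in $\Z[1/6]$.)

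For condition (c), I would combine three ingredients. First, by Example \ref{exa:z} we have $\tilde{K}_2(2,\Z[1/6])\cong \Z\oplus \F{2}^\times\oplus \F{3}^\times\cong \Z\oplus \Z/2$. Second, since conditions (a) and (b) are satisfied, Lemma \ref{lem:h2o}(1) guarantees that the natural map
\[
\hoz{2}{\spl{2}{\Z[1/6]}}\longrightarrow \tilde{K}_2(2,\Z[1/6])
\]
is surjective. Third, I would invoke the computation of Tuan--Ellis (\cite{tuanellis}) cited in the remark following Theorem \ref{thm:mv}, which shows
\[
\hoz{2}{\spl{2}{\Z[1/6]}}\cong \Z\oplus \Z/2.
\]

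Given these three facts, the conclusion is a standard algebraic observation: a surjective homomorphism from a finitely generated abelian group to an abelian group abstractly isomorphic to it must be an isomorphism (equivalently, every surjective endomorphism of a finitely generated abelian group is an isomorphism, by rank and torsion considerations). Applying this to the surjection above yields that $\hoz{2}{\spl{2}{\Z[1/6]}}\to \tilde{K}_2(2,\Z[1/6])$ is an isomorphism, which is condition (c).

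The main obstacle, such as it is, lies in the appeal to the Tuan--Ellis computation: without it one would need an independent calculation of $\hoz{2}{\spl{2}{\Z[1/6]}}$, which is not easy. Everything else reduces to a unit check in $\Z[1/6]$ and assembling results already proved in the excerpt.
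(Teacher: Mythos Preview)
Your proof is correct and follows essentially the same approach as the paper's own proof: verify (a) and (b) directly, invoke Lemma \ref{lem:h2o}(1) for surjectivity onto $\tilde{K}_2(2,\Z[1/6])\cong \Z\oplus\Z/2$, and then appeal to the Tuan--Ellis computation to conclude the surjection is an isomorphism. The paper is slightly terser (it omits the trivial factor $\F{2}^\times$ and leaves the final algebraic step implicit), but the argument is the same.
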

\begin{proof}
The set $S=\{ 2,3,\infty\}$ clearly satisfies conditions (a) and (b). 

Observe that 
\[
\ntr{S}=\Z_{\{ 2,3\}}=\Z\left[ \frac{1}{2},\frac{1}{3}\right]=\Z\left[\frac{1}{6}\right].
\]

By Lemma \ref{lem:h2o}, the natural map 
\[
\hoz{2}{\spl{2}{\Z[1/6]}}\to \tilde{K}_2(2,\Z[1/6])
\cong \Z\oplus \F{3}^\times
\]
(see Example \ref{exa:z}) is surjective.

On the other hand, the calculations of Tuan and Ellis, \cite{tuanellis},  show that
\[
\hoz{2}{\spl{2}{\Z[1/6]}}\cong \Z\oplus \Z/2\Z.
\]

It follows that the natural map above is an isomorphism. 
\end{proof}

In view of Theorem\ref{thm:s} (2) and Example \ref{exa:z}, we immediately deduce:
\begin{thm}\label{thm:z}
Let $T$ be any set of prime numbers containing $2, 3$. Then there is an isomorphism
\[
\hoz{2}{\spl{2}{\Z_T}}\cong \Z\oplus\left(\oplus_{p\in T}\F{p}^\times\right).
\]

In particular, if $m\in \Z$ and if $6|m$ then 
\[
\hoz{2}{\spl{2}{\Z[1/m]}}\cong \Z\oplus\left(\oplus_{p|m}\F{p}^\times\right).
\]
\end{thm}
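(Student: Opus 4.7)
The plan is to deduce this statement as a direct packaging of Theorem \ref{thm:s} (2), Lemma \ref{lem:z}, and Example \ref{exa:z}. Indeed, all the substantive work has already been done: Lemma \ref{lem:z} verifies that the set $S_0 := \{2,3,\infty\}$ satisfies conditions (a)--(c) of Theorem \ref{thm:s} (1) (the only nontrivial point being condition (c), which is established using the Tuan--Ellis computation $\hoz{2}{\spl{2}{\Z[1/6]}}\cong \Z\oplus \Z/2$ combined with the surjection provided by Lemma \ref{lem:h2o}).

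Given a set $T$ of prime numbers with $\{2,3\}\subseteq T$, I would set $T' := T\cup\{\infty\}$, so that $\ntr{T'} = \Z_T$ in the notation of Example \ref{exa:z}, and $S_0 \subseteq T'$. Theorem \ref{thm:s} (2), applied with $S = S_0$ and this enlargement $T'$, then supplies a natural isomorphism
\[
\hoz{2}{\spl{2}{\Z_T}}\;\cong\;\tilde{K}_2(2,\Z_T).
\]
Next, I would invoke Example \ref{exa:z}, which identifies
\[
\tilde{K}_2(2,\Z_T)\;\cong\;\Z\oplus\left(\oplus_{p\in T}\F{p}^\times\right),
\]
the $\Z$-summand being generated by the class of $c(-1,-1)\in K_2(2,\Q)$. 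Composing these two isomorphisms yields the first assertion.

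For the ``in particular'' clause, if $m\in\Z$ satisfies $6\sdv m$, then $\Z[1/m] = \Z_T$ where $T$ is the (finite) set of rational primes dividing $m$, and the divisibility condition forces $\{2,3\}\subseteq T$, so the first part applies directly. There is no real obstacle at this stage; the heavy lifting has been completed upstream --- the Mayer--Vietoris analysis culminating in Theorem \ref{thm:mv}, the control of $\mathcal{K}_S$ via Lemma \ref{lem:h2o}, and the numerical input from Tuan--Ellis --- so that Theorem \ref{thm:z} is a formal consequence.
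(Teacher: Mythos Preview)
Your proposal is correct and follows essentially the same approach as the paper: the paper states Theorem~\ref{thm:z} as an immediate consequence of Theorem~\ref{thm:s}~(2) and Example~\ref{exa:z}, with Lemma~\ref{lem:z} supplying the verification that $S=\{2,3,\infty\}$ satisfies conditions (a)--(c). Your write-up simply makes this explicit.
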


Combining this with Morita's Theorem (\ref{thm:morita}) we deduce:

\begin{prop}\label{prop:morita}
Let $S$ be any of the following sets of primes numbers:

 $S=\{p_1,\ldots, p_n\}$,  the set of the first $n$ successive prime numbers, or
 $S$ is one of $\{ 2,3,7\}$, $\{ 2,3,11\}$, $\{ 2,3,5,11\}$, $\{ 2,3,13\}$, 
$\{ 2,3,7,13\}$, $\{2,3,17\}$, $\{ 2,3,5,19\}$.    
 
Then the natural map  
\[
\hoz{2}{\spl{2}{\Z_{S}}}\to K_2(2,\Z_{S})
\]
is an isomorphism and 
\[
1\to K_2(2,\Z_{S})\to \st{2}{\Z_{S}}\to \spl{2}{\Z_{S}}\to 1 
\]
is the universal central extension of $\spl{2}{\Z_{S}}$. 
\end{prop}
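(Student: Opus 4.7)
The plan is to combine Theorem \ref{thm:z} with Morita's Theorem \ref{thm:morita} by a naturality argument. Observe first that each set $S$ in the list contains $\{2,3\}$ (in the case $S = \{p_1,\ldots, p_n\}$ this requires $n \geq 2$) and has $|S| \geq 2$. In particular $\lambda = 2$ satisfies $\lambda, \lambda^2 - 1 = 3 \in \Z_S^\times$, so by Proposition \ref{prop:e2} the group $\spl{2}{\Z_S} = E_2(\Z_S)$ is perfect. Morita's Theorem further asserts that $K_2(2, \Z_S)$ is central in $\st{2}{\Z_S}$, so the construction of Section \ref{sec:map} applies and produces a natural homomorphism
\[
\varphi_S : \hoz{2}{\spl{2}{\Z_S}} \longrightarrow K_2(2, \Z_S).
\]

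The central step is to verify that $\varphi_S$ is an isomorphism. By functoriality of the construction of Section \ref{sec:map} under the ring inclusion $\Z_S \hookrightarrow \Q$, and using the Matsumoto--Moore isomorphism $\hoz{2}{\spl{2}{\Q}} \cong K_2(2, \Q)$, the square
\[
\begin{CD}
\hoz{2}{\spl{2}{\Z_S}} @>{\varphi_S}>> K_2(2, \Z_S) \\
@VVV @VVV \\
\hoz{2}{\spl{2}{\Q}} @>{\cong}>> K_2(2, \Q)
\end{CD}
\]
commutes. Both compositions from $\hoz{2}{\spl{2}{\Z_S}}$ to $K_2(2, \Q)$ have image inside $\tilde{K}_2(2, \Z_S)$: the left-then-bottom composition is, by the very definition given in Section \ref{sec:main}, the isomorphism of Theorem \ref{thm:z}, while the right vertical map is Morita's isomorphism $K_2(2, \Z_S) \cong \tilde{K}_2(2, \Z_S)$. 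Two sides of the square being isomorphisms onto $\tilde{K}_2(2, \Z_S)$ forces $\varphi_S$ itself to be an isomorphism.

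The universal central extension claim is then immediate: by the remark in Section \ref{sec:map}, $\varphi_S$ is an isomorphism precisely when the central extension $1 \to K_2(2, \Z_S) \to \st{2}{\Z_S} \to \spl{2}{\Z_S} \to 1$ is the universal central extension of $\spl{2}{\Z_S}$. The only nontrivial part of this strategy is the naturality bookkeeping in the central step --- verifying that $\varphi_S$, the stabilization $K_2(2, \Z_S) \to \tilde{K}_2(2, \Z_S)$, and the isomorphism of Theorem \ref{thm:z} fit into a commutative triangle --- which follows formally from the functoriality of the 2-cocycle construction. Once this is in hand, the proposition is a direct consequence of Theorems \ref{thm:z} and \ref{thm:morita}.
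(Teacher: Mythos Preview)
Your proof is correct and follows essentially the same route as the paper's: use centrality of $K_2(2,\Z_S)$ (Morita) to obtain the natural map $\varphi_S$, observe that it factors the map $\hoz{2}{\spl{2}{\Z_S}}\to K_2(2,\Q)$ through $K_2(2,\Z_S)\to K_2(2,\Q)$, and then conclude from the fact that both $\hoz{2}{\spl{2}{\Z_S}}$ and $K_2(2,\Z_S)$ map isomorphically onto $\tilde{K}_2(2,\Z_S)\subset K_2(2,\Q)$ (by Theorem \ref{thm:z} and Theorem \ref{thm:morita} respectively). You spell out a few points the paper leaves implicit --- notably the perfectness of $\spl{2}{\Z_S}$ and the restriction $n\geq 2$ in the consecutive-primes case --- but the argument is the same.
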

\begin{proof}
Since $K_2(2,\Z_S)$ is central in $\st{2}{\Z_S}$, there is a natural homomorphism 
$\hoz{2}{\spl{2}{\Z_{S}}}\to K_2(2,\Z_{S})$ through which the map 
$\hoz{2}{\spl{2}{\Z_{S}}}\to K_2(2,\Q)$ factors. 

Since $\hoz{2}{\spl{2}{\Z_{S}}}$ and $K_2(2,\Z_{S})$ are both isomorphic to 
$\tilde{K}_2(2,\Z_S)\subset K_2(2,\Q)$, the result follows immediately.
\end{proof}

\section{Some $2$-dimensional  homology classes}\label{sec:classes}

In this section we construct  explicit cycles in the bar resolution of $\spl{2}{A}$ which 
represent homology classes in $\hoz{2}{\spl{2}{A}}$. We show that these classes map
to the symbols $c(u,v)\in K_2(2,A)$,  when $A$ is a field.

\subsection{The homology classes $C(a,b)$}
Let $A$ be a commutative ring and let $a\in A^\times$. We define the following elements of 
$\spl{2}{A}$:
\[
w:=\matr{0}{1}{-1}{0},\quad G_a:=\matr{0}{-1}{1}{a+a^{-1}},\quad 
H_a:= E_{21}(a)=\matr{1}{0}{a}{1}.
\]  

Note that 
\[
wG_a= \matr{1}{a+a^{-1}}{0}{1}=E_{12}(a+a^{-1}).
\]
We also define
\[
R_a:=H_aG_aH^{-1}_a=H_{a}G_aH_{-a}=\matr{a}{-1}{0}{a^{-1}}.
\]
Thus, by definition, 
\[
H_aG_a=R_aH_a=\matr{0}{-1}{1}{a^{-1}}.
\]

Let 
\[
\Theta_a:= [H_a|G_a]-[R_a|H_a]+[w^{-1}|wG_a]\in \bar{F}_2(\spl{2}{A})=\bar{F}_2.
\]

Then 
\[
d_2(\Theta_a)= [w^{-1}]+[wG_a]-[R_a]
\in \bar{F}_1.
\]

Now let $a,b\in A^\times$. Then 
\begin{eqnarray*}
d_2(\Theta_{ab}-\Theta_a-\Theta_b+\Theta_1)=
\left( [R_a]+[R_b]-[R_{ab}]\right)+\left( [wG_{ab}]-[wG_a]-[wG_b]+[wG_1]-[R_1]\right).
\end{eqnarray*}

Now 
\[
[R_a]+[R_b]=[R_aR_b]+d_2\left([R_a|R_b]\right)
\]
and 
\[
[R_{ab}]= [R_aR_b]+[(R_aR_b)^{-1}]-d_2\left([R_aR_b|(R_aR_b)^{-1}(R_{ab})]\right).
\]

Hence
\[
[R_a]+[R_b]-[R_{ab}]=-[(R_aR_b)^{-1}R_{ab}]+d_2\left([R_a|R_b]+[R_aR_b|(R_aR_b)^{-1}(R_{ab})]\right)
\]
where
\[
(R_aR_b)^{-1}R_{ab}=\matr{1}{(ab)^{-1}(a+b^{-1}-1)}{0}{1}=E_{12}((ab)^{-1}(a+b^{-1}-1)).
\]

Putting this together, we deduce 
\begin{eqnarray*}
&&d_2(\Theta_{ab}-\Theta_a-\Theta_b+\Theta_1 - [R_a|R_b]-[R_aR_b|(R_aR_b)^{-1}(R_{ab})])\\
&=& [wG_{ab}]-[wG_a]-[wG_b]+[wG_1]-[R_1]+[(R_aR_b)^{-1}(R_{ab})]\\
&=& [E_{12}(ab+(ab)^{-1})]-[E_{12}(a+a^{-1})]-[E_{12}(b+b^{-1})]\\
&&+[E_{12}(2)]-[E_{12}(-1)]+
[E_{12}((ab)^{-1}(a+b^{-1}-1))].
\end{eqnarray*}

Now suppose that there exists $\lambda\in A^\times$ such that $\lambda^2-1\in A^\times$. 
Let
\[
D(\lambda):=\matr{\lambda}{0}{0}{\lambda^{-1}}\in\spl{2}{A}.
\]

Recall that for any $x\in A$ 
\[
D(\lambda)E_{12}(x)D(\lambda)^{-1}= E_{12}(\lambda^2x)
\]
and hence for any $x\in A$ we have  
\begin{eqnarray*}
E_{12}(x)&=& D(\lambda)E_{12}(x')D(\lambda)^{-1}E_{12}(x')^{-1}\\
&=&D(\lambda)E_{12}(x')(E_{12}(x')D(\lambda))^{-1}= [D(\lambda),E_{12}(x')].
\end{eqnarray*}
where $x':= x/(\lambda^2-1)$.

Now if $G$ is any group and if $g,h\in G$ then 
\[
D_2([(gh)(hg)^{-1}|hg]-[g|h]+[h|g])= [(gh)(hg)^{-1}]=\left[ [g,h]\right].
\]

Thus, we define 
\[
\Psi_x=\Psi_{x,\lambda}:=\left[E_{12}(x)|E_{12}(x')D(\lambda)\right]-\left[ D(\lambda)| E_{12}(x')\right]
+\left[ E_{12}(x')|D(\lambda)\right]\in \bar{F}_2.
\]
By the preceeding remarks, we have $d_2(\Psi_{x,\lambda})=[E_{12}(x)]\in \bar{F}_1$ for any 
$x\in A$. 

From all of these calculations we deduce: 
\begin{prop}\label{prop:fab}
Let $A$ be a commutative ring. Suppose that there exists $\lambda\in A^\times$ such that 
$\lambda^2-1\in A^\times$. Let $a,b\in A^\times$. Then 
\begin{eqnarray*}
F(a,b)_\lambda:&=& [R_a|R_b]+[R_aR_b|(R_aR_b)^{-1}(R_{ab})]+\Theta_a+\Theta_b-\Theta_{ab}-\Theta_1 \\
&+&\Psi_{ab+(ab)^{-1}}-\Psi_{a+a^{-1}}-\Psi_{b+b^{-1}}+\Psi_{2}-\Psi_{-1}+\Psi_{(ab)^{-1}(a+b^{-1}-1)}\in \bar{F}_2 
\end{eqnarray*}
is a cycle, representing a homology class $C(a,b)_{\lambda}\in \hoz{2}{\spl{2}{A}}$.
\end{prop}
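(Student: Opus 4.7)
My plan is to verify the cocycle condition $d_2(F(a,b)_\lambda)=0$ by direct computation, combining the two pieces of work done immediately before the statement of the proposition.

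First, the calculation of $d_2$ on the combination $\Theta_a+\Theta_b-\Theta_{ab}-\Theta_1$ together with the two two-cells $[R_a|R_b]$ and $[R_aR_b|(R_aR_b)^{-1}R_{ab}]$ is essentially already carried out in the paragraphs preceding the proposition. The input is the bar boundary $d_2([g|h])=[g]-[gh]+[h]$ in $\bar F_2(\spl{2}{A})$; the identity $H_aG_a=R_aH_a$, immediate from the matrix definitions, makes the $[H_a]$, $[G_a]$, $[H_aG_a]$ and $[R_aH_a]$ contributions in $\Theta_a$ telescope to $d_2(\Theta_a)=[w^{-1}]+[wG_a]-[R_a]$; the four $[w^{-1}]$'s in $d_2(\Theta_a+\Theta_b-\Theta_{ab}-\Theta_1)$ cancel; and the $[R_a],[R_b],[R_{ab}],[R_aR_b]$ contributions cancel further against those arising from the two bar cells involving the $R$'s. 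Identifying the surviving matrices via $wG_x=E_{12}(x+x^{-1})$, $R_1=E_{12}(-1)$ and $(R_aR_b)^{-1}R_{ab}=E_{12}((ab)^{-1}(a+b^{-1}-1))$, the boundary of the non-$\Psi$ part of $F(a,b)_\lambda$ reduces to a signed sum of the six bar classes $[E_{12}(ab+(ab)^{-1})]$, $[E_{12}(a+a^{-1})]$, $[E_{12}(b+b^{-1})]$, $[E_{12}(2)]$, $[E_{12}(-1)]$ and $[E_{12}((ab)^{-1}(a+b^{-1}-1))]$.

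Second, the formula $d_2(\Psi_{x,\lambda})=[E_{12}(x)]$ established just before the proposition, via the commutator identity $E_{12}(x)=[D(\lambda),E_{12}(x/(\lambda^2-1))]$ valid under the standing hypothesis $\lambda,\lambda^2-1\in A^\times$, shows that the six $\Psi$-terms of $F(a,b)_\lambda$ contribute, after $d_2$, exactly the negatives of the six $[E_{12}(\cdot)]$ classes above. Summing the two contributions yields $d_2(F(a,b)_\lambda)=0$, and hence $F(a,b)_\lambda$ represents a well-defined class $C(a,b)_\lambda\in\hoz{2}{\spl{2}{A}}$.

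The main obstacle is purely sign bookkeeping: the verification is a mechanical telescoping of roughly thirty bar-symbols, and the delicate point is confirming that the signs and arguments of the six $\Psi$ terms in the definition of $F(a,b)_\lambda$ are exactly the negatives of the six residual $[E_{12}(\cdot)]$ classes produced by the first part. Beyond this, the only algebraic content consists of the three elementary matrix identities $H_aG_a=R_aH_a$, $wG_x=E_{12}(x+x^{-1})$ and $(R_aR_b)^{-1}R_{ab}=E_{12}((ab)^{-1}(a+b^{-1}-1))$, each of which is a direct $2\times 2$ matrix multiplication.
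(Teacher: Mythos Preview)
Your approach is exactly the paper's: the proposition is simply the summary of the running computation immediately preceding it, and your outline reproduces that computation faithfully.

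You are right that the only delicate point is the final sign check, and in fact when you carry it out you will find a discrepancy. A direct computation gives
\[
d_2\bigl([R_a|R_b]+[R_aR_b|(R_aR_b)^{-1}R_{ab}]+\Theta_a+\Theta_b-\Theta_{ab}-\Theta_1\bigr)
= [wG_a]+[wG_b]-[wG_{ab}]-[wG_1]+[R_1]+[(R_aR_b)^{-1}R_{ab}],
\]
so the last residual term $[E_{12}((ab)^{-1}(a+b^{-1}-1))]$ appears with sign $+1$, not $-1$. Equivalently, in the paper's displayed formula just before the proposition the term $[(R_aR_b)^{-1}(R_{ab})]$ should carry a minus sign (the identity $[R_a]+[R_b]-[R_{ab}]=-[(R_aR_b)^{-1}R_{ab}]+d_2(\cdots)$ quoted two lines earlier is correct; the sign is dropped in the next display). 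Consequently the last $\Psi$-term in the definition of $F(a,b)_\lambda$ should be $-\Psi_{(ab)^{-1}(a+b^{-1}-1)}$ rather than $+\Psi_{(ab)^{-1}(a+b^{-1}-1)}$ for the cycle condition to hold. This typo is harmless for the later applications (Lemma~\ref{lem:indep} and Theorem~\ref{thm:symb}), since $\bar f(\Psi_x)=1$ and $F(a,b)_\lambda-F(a,b)_\mu$ involves only differences $\Psi_{x,\lambda}-\Psi_{x,\mu}$, but it does mean that $F(a,b)_\lambda$ as literally written is not a cycle. With the sign corrected, your verification goes through verbatim.
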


\begin{rem}
The cycles $F(a,b)_{\lambda}$ are clearly functorial in the sense that if $\psi:A\to B$ is a homomorphism
of commutative rings and if $a,b,\lambda,\lambda^2-1\in A^\times$ then 
\[
\psi_*(F(a,b)_{\lambda})=F(\psi(a),\psi(b))_{\psi(\lambda)}\in \bar{F}_2(\spl{2}{B}).
\]
\end{rem}

\begin{rem}
More generally, suppose that 
$\Lambda=(\lambda_1,\ldots, \lambda_n,b_1,\ldots, b_n)\in (A^\times)^n\times (A^n)$
satisfies
\[
\sum_{i=1}^n(\lambda_i^2-1)b_i=1
\]

Then for any $x\in A$
\[
E_{12}(x)=
\prod_i[D(\lambda_i),E_{12}(b_ix)].
\]
by the proof of Proposition \ref{prop:e2}.

Since
\[
[\prod_{i=1}^nc_i]=\sum_{i=1}^n[c_i]-d_2\left(\sum_{k=1}^{n-1}[c_1\cdots c_k|c_{k+1}]\right)
\]
in $\bar{F}_1(A)$, we can easily write down an element $\Psi_{x,\Lambda}\in \bar{F}_2(A)$ satisfying 
$d_2(\Psi_{x,\Lambda})=[E_{12}(x)]$ and thus construct cycles $F(a,b)_{\Lambda}$. 
\end{rem}

\begin{rem} Specialising to the case $a=b=-1$ we obtain:
\[
F(-1,-1)_{\lambda}=[R_{-1}|R_{-1}]+[E_{12}(2)|E_{12}(-3)]+\Psi_{-3}-\Psi_{-1}\\
+2( \Theta_{-1}-\Theta_1+\Psi_{2}-\Psi_{-2}).
\]
\end{rem}

As we will see, when $A$ is a field with at least four elements, 
the homology class $C(a,b)_{\lambda}$ does not depend on the choice of $\lambda$. In fact, this is the case 
for many commutative rings. For example, we have: 

\begin{lem}\label{lem:indep}
Let $A$ be a commutative ring. Suppose there exists $n\in \Z$ such that $n,n^4-1\in A^\times$. 

Then, for any $a,b\in A^\times$, 
the homology class $C(a,b)_{\lambda}\in \hoz{2}{\spl{2}{A}}$ is independent of the choice of 
$\lambda$. 
\end{lem}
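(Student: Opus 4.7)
The plan is to track how $F(a,b)_\lambda$ depends on $\lambda$ and to show the dependence is trivial in homology. First I would observe that among the constituent terms of $F(a,b)_\lambda$ listed in Proposition~\ref{prop:fab}, the pieces $[R_a|R_b]$, $[R_aR_b\mid (R_aR_b)^{-1}R_{ab}]$ and the four $\Theta$'s are $\lambda$-free; only the $\Psi_x=\Psi_{x,\lambda}$ terms depend on $\lambda$. Hence for two valid choices $\lambda_1,\lambda_2$ one has
\[
F(a,b)_{\lambda_1}-F(a,b)_{\lambda_2}=\sum_{i=1}^{6}\epsilon_i\bigl(\Psi_{x_i,\lambda_1}-\Psi_{x_i,\lambda_2}\bigr),
\]
where the six pairs $(\epsilon_i,x_i)$ are read off from the formula; the signs are $+,-,-,+,-,+$, and they sum to zero.

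Next define $f\colon A\to\hoz{2}{\spl{2}{A}}$ by $f(x):=[\Psi_{x,\lambda_1}-\Psi_{x,\lambda_2}]$; this is well-defined because $d_2\Psi_{x,\lambda}=[E_{12}(x)]$ independently of $\lambda$, so the argument of $f$ is a $2$-cycle. I would establish two properties of $f$. \emph{Additivity}: $f(x+y)=f(x)+f(y)$, proved by exhibiting an explicit $3$-chain $\Xi_\lambda(x,y)$ in the upper-triangular Borel subgroup (generated by the $E_{12}(z)$'s together with $D(\lambda)$) whose boundary is $\Psi_{x+y,\lambda}-\Psi_{x,\lambda}-\Psi_{y,\lambda}$; this is a routine if tedious bar-complex computation using $E_{12}(x)E_{12}(y)=E_{12}(x+y)$, the $\Z$-linearity of $x\mapsto x':=x/(\lambda^2-1)$, and the conjugation formula $D(\lambda)E_{12}(z)D(\lambda)^{-1}=E_{12}(\lambda^2z)$. \emph{Square-scaling invariance}: for every $\mu\in A^\times$, $f(\mu^2 x)=f(x)$. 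Indeed, $D(\mu)$ commutes with each $D(\lambda_i)$ and conjugates $E_{12}(z)$ to $E_{12}(\mu^2 z)$, so conjugation by $D(\mu)$ carries the chain $\Psi_{x,\lambda_i}$ to $\Psi_{\mu^2x,\lambda_i}$; since inner conjugation acts trivially on group homology, $f(\mu^2 x)=f(x)$.

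Combining the two properties yields $f((\mu^2-1)x)=f(\mu^2 x)-f(x)=0$ for every $\mu\in A^\times$ and $x\in A$. The hypothesis $n^4-1\in A^\times$ gives $n^2-1\in A^\times$ (via $n^4-1=(n^2-1)(n^2+1)$), so $(n^2-1)A=A$ and therefore $f\equiv 0$. This already forces the signed sum above to vanish, so $C(a,b)_{\lambda_1}=C(a,b)_{\lambda_2}$ in $\hoz{2}{\spl{2}{A}}$; note that the observation $\sum_i\epsilon_i=0$ is not strictly needed once $f\equiv 0$, but would alone suffice together with mere constancy of $f$.

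The main obstacle is the additivity step: writing down a concrete $3$-chain realising $\Psi_{x+y,\lambda}-\Psi_{x,\lambda}-\Psi_{y,\lambda}$ as a boundary requires careful manipulation of $3$-simplices of the shape $[D(\lambda)|E_{12}(x')|E_{12}(y')]$, $[E_{12}(x')|D(\lambda)|E_{12}(y')]$ and $[E_{12}(x')|E_{12}(y')|D(\lambda)]$, together with sign bookkeeping and collapse of degenerate terms. The remaining arguments (conjugation invariance, passing from $f((\mu^2-1)x)=0$ to $f\equiv 0$ via the hypothesis, and the cancellation of the signed sum) are essentially formal once additivity is in hand.
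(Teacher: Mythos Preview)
Your overall strategy is different from the paper's, and it has a genuine gap at the additivity step.

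First, a literal error: $\Psi_{x+y,\lambda}-\Psi_{x,\lambda}-\Psi_{y,\lambda}$ is \emph{not} a cycle, since
\[
d_2\bigl(\Psi_{x+y,\lambda}-\Psi_{x,\lambda}-\Psi_{y,\lambda}\bigr)=[E_{12}(x+y)]-[E_{12}(x)]-[E_{12}(y)]=-d_2\bigl([E_{12}(x)\mid E_{12}(y)]\bigr)\neq 0,
\]
so it cannot be the boundary of any $\Xi_\lambda$. What you would actually need is a $3$-chain with boundary
\[
W_\lambda(x,y):=\Psi_{x+y,\lambda}-\Psi_{x,\lambda}-\Psi_{y,\lambda}+[E_{12}(x)\mid E_{12}(y)],
\]
which \emph{is} a cycle; the $\lambda$-independent correction term then cancels in the difference.

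More seriously, you have not established that such $\Xi_\lambda$ exists, and calling it ``routine if tedious'' is optimistic. Observe that if additivity of $f$ held unconditionally, then your scaling argument would already give $f((\lambda_1^2-1)x)=0$ for every $x$, using only the standing assumption $\lambda_1^2-1\in A^\times$; the hypothesis on $n$ would be entirely superfluous. So either the lemma's hypothesis is redundant, or the unconditional additivity you claim is false. You have not resolved which, and the class $[W_\lambda]$ lives in the part of $\hoz{2}{\bor}$ coming from $H_\bullet(U)$ (with $U$ the unipotent radical), which is exactly what the hypothesis on $n$ is designed to kill.

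The paper avoids this difficulty altogether. It notes that each $\Psi_{x,\lambda}-\Psi_{x,\mu}$ is a cycle in $\bar F_2(\bor)$, computes its image under $\pi\colon\bor\to T$ (the diagonal torus) and checks directly, using degenerate simplices, that this image is a boundary in $\bar F_2(T)$. Then the Hochschild--Serre spectral sequence for $1\to U\to\bor\to T\to 1$, together with the centre-kills argument (Lemma~\ref{lem:ck}) applied with the central element $D(n)$ acting by $n^2$ on $\hoz{1}{U}$ and $n^4$ on $\hoz{2}{U}$, shows that $\pi$ induces an isomorphism $\hoz{2}{\bor}\cong\hoz{2}{T}$. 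This is where the full hypothesis $n^4-1\in A^\times$ (not merely $n^2-1\in A^\times$) is used. Hence each $\Psi_{x,\lambda}-\Psi_{x,\mu}$ is already trivial in $\hoz{2}{\bor}$, with no need to analyse how the construction varies with $x$.
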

\begin{proof}
 Suppose that $\lambda,\mu\in A^\times$ satisfy the condition $\lambda^2-1,\mu^2-1
\in A^\times$.

Let $a,b\in A^\times$. Note that $F(a,b)_{\lambda}-F(a,b)_\mu$ is a sum or difference of terms of the 
form $\Psi_{x,\lambda}-\Psi_{x,\mu}$, $x\in A$. We will show that each such term is a boundary. 

We begin by observing that, for any $x\in A$, the elements $\Psi_{x,\lambda}$ and $\Psi_{x,\mu}$ lie in 
$\bar{F}_2(\bor)$ where 
\[
\bor:= \left\{\matr{u}{y}{0}{u^{-1}}\in\spl{2}{A}\ |\ u\in A^\times\right\}
\]    
is the subgroup of upper-triangular matrices in $\spl{2}{A}$. 

Note that 
\[
d_2(\Psi_{x,\lambda}-\Psi_{x,\mu})=[E_{12}(x)]-[E_{12}(x)]=0
\]
so that $\Psi_{x,\lambda}-\Psi_{x,\mu}$ represents a homology class in $\hoz{2}{\bor}$. We will show that 
it represents the trivial class. 

Let $T:= \{ D(u)\ |\ u\in A^\times \}$ be the group of diagonal matrices and let 
$\pi:\bor\to T$ be the natural surjective homomorphism sending $D(u)E_{12}(z)$ to $D(u)$. 
Then
\[
U:=\ker{\pi}=\{ E_{12}(y)\ |\ y\in A\}
\]
is the group of unipotent matrices. 

We have $T\cong A^\times$ via $D(u)\leftrightarrow u$ and 
$U\cong A$, via $E_{12}(x)\leftrightarrow x$. 

Note that 
\begin{eqnarray*}
\pi(\Psi_{x,\lambda})&=& \pi\left( \left[E_{12}(x)|E_{12}(x')D(\lambda)\right]-
\left[ D(\lambda)| E_{12}(x')\right]\right)\\
&=& [I|D(\lambda)]-[D(\lambda)|I]+[I|D(\lambda)]\in \bar{F}_2(T).
\end{eqnarray*}
Since 
\[
d_3([X|I|I]=[I|I]-[I|X]\mbox{ and } d_3([I|I|X])=[X|I]-[I|I]
\]
it follows easily that $\pi(\Psi_{x,\lambda}-\Psi_{x,\mu})\in d_3(\bar{F}_3(T))$. 
Thus $\pi(\Psi_{x,\lambda}-\Psi_{x,\mu})$ represents the trivial homology class in $\hoz{2}{T}$. 

To conclude, we will show that our hypotheses are enough to ensure that $\pi$ induces an isomorphism 
$\hoz{2}{\bor}\cong \hoz{2}{T}$. 

We consider the Hochschild-Serre spectral sequence associated to the short exact sequence 
\[
1\to U\to \bor \to T\to 1.
\]

This has the form
\[
E^2_{i,j}=\ho{i}{T}{\hoz{j}{U}}\Rightarrow \hoz{i+j}{\bor}
\]
$D(u)\in T$ acts by conjugation on $U\cong A$ as multiplication by $u^2$. Thus the induced 
action of $D(u)$ on $\hoz{2}{U}\cong U\bigwedge_{\Z}U\cong A\bigwedge_{\Z}A$ is multiplication 
by $u^4$. 

In particular, $D(n)$ acts as multiplication by $n^2$ on $\hoz{1}{U}$, and as multiplication 
by $n^4$ on $\hoz{2}{U}$.

Since $T$ is abelian, and since $n^2-1$, $n^4-1$ are units in $A$, 
it follows from the  ``centre kills'' argument that $\ho{i}{T}{\hoz{j}{U}}=0$ 
for $1\leq j\leq 2$.

Thus, from the spectral sequence, the map $\pi$ induces an isomorphism 
$\hoz{n}{\bor}\cong\hoz{n}{T}$ for $n\leq 2$.  
\end{proof}
\begin{rem} Since $2^4-1=3\cdot 5$, the condition of the Lemma \ref{lem:indep} is satisfied 
by any ring in which 
$2,3$ and $5$ are units.
\end{rem}

\subsection{A variation} We describe a slightly more compact $2$-cycle $\tilde{F}(a,b)_{\lambda}$ 
in the case where $a^2-1, b^2-1$ and $(ab)^2-1$ are all units in $A$. 

Suppose that $a\in A$ is a unit such that $a^2-1\in A^\times$ also. 
Let
\[
\tilde{H}_a=\matr{\frac{1}{1-a}}{\frac{a}{1-a}}{\frac{a}{1+a}}{\frac{1}{1+a}}\in \spl{2}{A}.
\] 

Then 
\[
\tilde{H}_aG_a\tilde{H}_a^{-1}=\matr{a}{0}{0}{a^{-1}}=D(a).
\]

Thus if we let 
\[
\tilde{\Theta}_a:=[\tilde{H}_a|G_a]-[D(a)|\tilde{H}_a]+[w^{-1}|wG_a]\in \bar{F}_2
\]
then
\[
d_2(\tilde{\Theta}_a)=[w^{-1}]+[wG_a]-[D(a)].
\]

If $a^2-1, b^2-1, (ab)^2-1\in A^\times$ then 
\begin{eqnarray*}
d_2(\tilde{\Theta}_a+\tilde{\Theta}_b-\tilde{\Theta}_{ab}-\Theta_1)&=& [D(ab)]-[D(a)]-[D(b)]\\
&&+[E_{12}(a+a^{-1})]+[E_{12}(b+b^{-1})]-[E_{12}(ab+(ab)^{-1})]+[E_{12}(-1)]-[E_{12}(2)]\\
&=& d_2(-[D(a)|D(b)]+\Psi_{a+a^{-1}}+\Psi_{b+b^{-1}}-\Psi_{ab+(ab)^{-1}}+\Psi_{-1}-\Psi_2).\\
\end{eqnarray*}

We deduce:
\begin{prop}\label{prop:var}
If $a,b,\lambda,a^2-1,b^2-1,(ab)^2-1,\lambda^2-1\in A^\times$ then 
\begin{eqnarray*}
\tilde{F}(a,b)_\lambda:= [D(a)|D(b)]+\tilde{\Theta}_a+\tilde{\Theta}_b-\tilde{\Theta}_{ab}-\Theta_1 
+\Psi_{ab+(ab)^{-1}}-\Psi_{a+a^{-1}}-\Psi_{b+b^{-1}}+\Psi_{2}-\Psi_{-1}
\end{eqnarray*}
is a cycle representing a homology class $\tilde{C}(a,b)_\lambda\in \hoz{2}{\spl{2}{A}}$. 
\end{prop}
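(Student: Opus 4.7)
The plan is essentially to observe that the cycle property is already implicit in the displayed computation immediately preceding the proposition; the work is to read off the correct signs and collect terms. I would proceed in three short steps.

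First, I would check that the hypotheses make every term of $\tilde{F}(a,b)_\lambda$ well-defined: the conditions $a^2-1, b^2-1, (ab)^2-1 \in A^\times$ are exactly what is needed for the matrices $\tilde{H}_a, \tilde{H}_b, \tilde{H}_{ab}$ (involving denominators $1 \pm a, 1 \pm b, 1 \pm ab$) to lie in $\spl{2}{A}$, and hence for $\tilde{\Theta}_a, \tilde{\Theta}_b, \tilde{\Theta}_{ab}$ to be defined. The hypothesis $\lambda, \lambda^2 - 1 \in A^\times$ is needed for the definition of $\Psi_x = \Psi_{x,\lambda}$. Thus $\tilde{F}(a,b)_\lambda \in \bar{F}_2(\spl{2}{A})$.

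Second, I would apply $d_2$ termwise using the formulas already derived in the preceding pages: the identity $d_2(\tilde{\Theta}_c) = [w^{-1}] + [wG_c] - [D(c)]$ for $c = a, b, ab$; the special case $d_2(\Theta_1) = [w^{-1}] + [wG_1] - [R_1]$ (noting $wG_1 = E_{12}(2)$ and $R_1 = E_{12}(-1)$); the identity $d_2(\Psi_x) = [E_{12}(x)]$; and the bar-resolution formula $d_2([D(a)|D(b)]) = [D(a)] - [D(ab)] + [D(b)]$ in $\bar{F}_1$.

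Third, I would do the bookkeeping. The $[D(\cdot)]$-contributions from $[D(a)|D(b)]$ and from $\tilde{\Theta}_a + \tilde{\Theta}_b - \tilde{\Theta}_{ab}$ exactly cancel. The four copies of $[w^{-1}]$ cancel in pairs $(1+1-1) + (-1) = 0$, contributed by $\tilde{\Theta}_a, \tilde{\Theta}_b, -\tilde{\Theta}_{ab}, -\Theta_1$. What survives from the $\tilde{\Theta}$'s and $-\Theta_1$ is the combination $[wG_a] + [wG_b] - [wG_{ab}] - [wG_1] + [R_1] = [E_{12}(a+a^{-1})] + [E_{12}(b+b^{-1})] - [E_{12}(ab+(ab)^{-1})] - [E_{12}(2)] + [E_{12}(-1)]$, which is precisely the negative of the $d_2$ of the remaining $\Psi$-piece $\Psi_{ab+(ab)^{-1}} - \Psi_{a+a^{-1}} - \Psi_{b+b^{-1}} + \Psi_2 - \Psi_{-1}$. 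Hence $d_2(\tilde{F}(a,b)_\lambda) = 0$, which is essentially a rearrangement of the displayed equation just above the proposition.

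There is no real obstacle: this is a routine accounting of signs, and all the nontrivial boundary identities have already been proven. The only point requiring slight care is ensuring that all indices and signs in the final collection match the statement of the proposition, but this is mechanical.
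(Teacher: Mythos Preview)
Your proposal is correct and is essentially identical to the paper's own argument: the paper simply writes ``We deduce:'' after displaying the equation
\[
d_2(\tilde{\Theta}_a+\tilde{\Theta}_b-\tilde{\Theta}_{ab}-\Theta_1)=d_2(-[D(a)|D(b)]+\Psi_{a+a^{-1}}+\Psi_{b+b^{-1}}-\Psi_{ab+(ab)^{-1}}+\Psi_{-1}-\Psi_2),
\]
and your three steps amount to unpacking that line together with the well-definedness check. Your sign bookkeeping is accurate.
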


\subsection{Symbols as homology classes}
In this section, the map of sets  $s:\spl{2}{F}\to\st{2}{F}$ and the homomorphism 
 $\bar{f}:\hoz{2}{\spl{2}{F}}\to K_2(2,F)$ are those described in section \ref{sec:map} above.

\begin{thm}\label{thm:symb}
 Let $F$ be a field with at least four elements. 
Let $\lambda\in F^\times\setminus \{ \pm 1\}$. 
\begin{enumerate}
\item Let $a,b\in F^\times$. Then 
\[
\bar{f}(C(a,b)_{\lambda})=c(a,b).
\] 
\item Suppose further that $a,b,ab\not\in \{ \pm 1\}$. Then
\[
\bar{f}(\tilde{C}(a,b)_{\lambda})=c(a,b).
\] 
\end{enumerate}
\end{thm}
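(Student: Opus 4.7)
The plan is to apply $\bar{f}\colon\hoz{2}{\spl{2}{F}}\to K_2(2,F)$ directly to the representing cycle $F(a,b)_{\lambda}$ of Proposition \ref{prop:fab}, evaluating the section cocycle $f_s(X,Y)=s(X)s(Y)s(XY)^{-1}$ on every bi-element appearing in $F(a,b)_{\lambda}$ and showing that, apart from $[R_a|R_b]$ (which supplies $c(a,b)$), every other term individually maps to $1$ in $\st{2}{F}$.

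First I would dispose of the ``easy'' terms. Since $R_a=D(a)E_{12}(-a^{-1})$ is upper triangular with diagonal $(a,a^{-1})$, Lemma \ref{lem:f} gives $f_s(R_a,R_b)=c(a,b)$ at once. For $[R_aR_b\,|\,(R_aR_b)^{-1}R_{ab}]$, both $R_aR_b$ and $R_{ab}$ have diagonal $(ab,(ab)^{-1})$, so their quotient is upper unipotent, and Lemma \ref{lem:f} combined with Proposition \ref{prop:mat}(1) gives $f_s=c(ab,1)=1$. Each of the six $\Psi$-summands $\Psi_{x,\lambda}$ contributes trivially: applying the explicit formula for $s$ to upper triangular matrices and using Corollary \ref{cor:conjh} to move $h_{12}(\lambda)$ across $x_{12}(\cdot)$, a short computation shows that each of $f_s(E_{12}(x),E_{12}(x')D(\lambda))$, $f_s(D(\lambda),E_{12}(x'))$ and $f_s(E_{12}(x'),D(\lambda))$ equals $1$. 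Finally, $f_s(w^{-1},wG_a)=1$ because $wG_a=E_{12}(a+a^{-1})$ and the three relevant $s$-lifts telescope.

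The crux---and the expected main obstacle---is to show that the remaining two factors $f_s(H_a,G_a)$ and $f_s(R_a,H_a)$ in $\bar{f}(\Theta_a)$ are also trivial. For $f_s(H_a,G_a)$, I would substitute $s(H_a)=x_{12}(a^{-1})w_{12}(-a^{-1})x_{12}(a^{-1})$ and $s(G_a)=w_{12}(-1)x_{12}(a+a^{-1})$, use Lemma \ref{lem:conjw} in the form $w_{12}(-1)x_{12}(a)w_{12}(1)=x_{21}(-a)$ (and its companions) to commute the $w_{12}$'s past the $x_{ij}$'s, and recognise the resulting product as $w_{12}(a^{-1})w_{12}(-a^{-1})=1$ via the defining identity $w_{ij}(u)=x_{ij}(u)x_{ji}(-u^{-1})x_{ij}(u)$. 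For $f_s(R_a,H_a)$, I would use the relation $R_aH_a=H_aG_a$ to identify the $s$-lift of the product, substitute $s(R_a)=x_{12}(-a)h_{12}(a)$, and then apply Corollary \ref{cor:conjh} together with the identities $w_{12}(u)w_{12}(-1)=h_{12}(u)$ and $w_{12}(1)^2=h_{12}(-1)^{-1}$ to reduce the product to $h_{12}(a)h_{12}(-a^{-1})h_{12}(-1)^{-1}=c(a,-a^{-1})$; this final symbol equals $c(a,1)=1$ by Proposition \ref{prop:mat}(4) applied to the pair $(a,-a^{-1})$.

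Part (2) follows by exactly the same strategy, with $[D(a)|D(b)]$ replacing $[R_a|R_b]$ as the main term and yielding $c(a,b)$ directly by Lemma \ref{lem:f}, and with each $\bar{f}(\tilde{\Theta}_a)=1$ by an analogous calculation in which the identity $\tilde{H}_aG_a\tilde{H}_a^{-1}=D(a)$ plays the role that $H_aG_aH_a^{-1}=R_a$ did in part (1); the extra hypothesis that $a^2-1$, $b^2-1$ and $(ab)^2-1$ are units is precisely what is needed to define $\tilde{H}_a$, $\tilde{H}_b$ and $\tilde{H}_{ab}$. The $\Psi$ summands contribute trivially exactly as before.
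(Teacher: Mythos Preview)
Your plan for part (1) is correct and is essentially the paper's own proof. The main term $[R_a|R_b]$ gives $c(a,b)$ via Lemma~\ref{lem:f}; the $\Psi$-terms and $[R_aR_b\,|\,(R_aR_b)^{-1}R_{ab}]$ die by the same lemma; and the paper likewise checks $\bar f([H_u|G_u])=1$, $\bar f([w^{-1}|wG_u])=1$, and $\bar f([R_u|H_u])=c(u,-u^{-1})=1$ via Proposition~\ref{prop:mat}(4), exactly as you outline.

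For part (2), however, calling the computation ``analogous'' hides a genuine difference that you would run into. In part (1) each piece of $\Theta_u$ mapped to $1$ on its own. That is \emph{not} what happens for $\tilde\Theta_a$: the paper's calculation gives
\[
\bar f([\tilde H_a\,|\,G_a])=c(1+a,a),\qquad
\bar f([D(a)\,|\,\tilde H_a])=c(a,1+a),
\]
neither of which is trivial in general. So $\bar f(\tilde\Theta_a)=c(1+a,a)\,c(a,1+a)^{-1}$, and one still needs a further symbol argument. The paper rewrites this as $c((-a)^2,1+a)$ using Lemma~\ref{lem:square} and then invokes the Steinberg relation of Proposition~\ref{prop:mat}(5) in the form $c(-a,1+a)=c(1+a,-a)=1$ (since $1-(-a)=1+a$ and $1-(1+a)=-a$). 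That last step is the real reason $\bar f(\tilde\Theta_a)=1$; it is not parallel to part (1), and in particular relation (4) alone does not suffice here. Your sketch for part (2) would need to incorporate this extra cancellation explicitly.
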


\begin{proof}
We begin by noting that, by Lemma \ref{lem:f}, we have 
\[
\bar{f}(\Psi_x)=1 \forall x\in F\mbox{ and } \bar{f}([R_aR_b|(R_aR_b)^{-1}(R_{ab})])=1
\]
 since $c(1,v)=c(u,1)=1$ in $K_2(2,F)$.  

Also, by Lemma \ref{lem:f}, 
\[
\bar{f}([R_a|R_b])=\bar{f}([D(a)|D(b)]=c(a,b).
\]

\begin{enumerate}
\item
For any $u\in F^\times$  
\begin{eqnarray*}
\bar{f}([H_u|G_u])&=& s(H_u)s(G_u)s(H_uG_u)^{-1}\\
&=& x_{21}(u)\cdot w_{12}(-1)x_{12}(u+u^{-1})\cdot x_{12}(-u^{-1})w_{12}(1)\\
&=&x_{21}(u)\cdot (w_{12}(-1)x_{12}(u)w_{12}(1))\\
&=&x_{21}(u)x_{12}(u)^{w_{12}(1)}\\
&=&x_{21}(u)x_{21}(-u)=1\mbox{\quad by Lemma \ref{lem:conjw}}.\\ 
\end{eqnarray*}
and
\begin{eqnarray*}
\bar{f}([w^{-1}|wG_u])&=& s(w^{-1})s(wG_u)s(G_u)^{-1}\\
&=& w_{12}(-1)x_{12}(u+u^{-1})\cdot\left(w_{12}(-1)x_{12}(u+u^{-1})\right)^{-1}=1.
\end{eqnarray*}

Furthermore
\begin{eqnarray*}
\bar{f}([R_u|H_u])&=& s(R_u)s(H_u)s(R_uH_u)^{-1}\\
&=&x_{12}(-u)h_{12}(u)x_{21}(u)x_{12}(-u^{-1})w_{12}(1)\\
&=&h_{12}(u)x_{12}(-u^{-1}) x_{21}(u)x_{12}(-u^{-1})w_{12}(1)\mbox{ since $x_{12}(-u)^{h_{12}(u)}
=x_{12}(-u^{-1})$}\\
&=& h_{12}(u)w_{12}(-u^{-1})w_{12}(1).\\
 \end{eqnarray*}

Now 
\[
w_{12}(-u^{-1})w_{12}(1)=w_{12}(-u^{-1})w_{12}(-1)w_{12}(-1)^{-1}w_{12}(-1)^{-1}
=h_{12}(-u^{-1})h_{12}(-1)^{-1}
\]
and hence
\[
\bar{f}([R_u|H_u])=c(u,-u^{-1})=c(-u,u)=1.
\]

Thus 
\[
\bar{f}(\Theta_u)=1
\]
for all units $u$. 

Putting all of this together gives $\bar{f}(F(a,b)_\lambda)=c(a,b)$ as required.

\item We must show that $\bar{f}(\tilde{\Theta}_a)=1$ whenever $a,a^2-1\in F^\times$. 

As above, we have $\bar{f}([w^{-1}|wG_a])=1$. 

Now, 
\begin{eqnarray*}
s(D(a))=h_{12}(a),\quad  s(\tilde{H}_a)=x_{12}\left(\frac{1+a}{a(1-a)}\right)
w_{12}\left(\frac{1+a}{-a}\right)x_{12}(a^{-1}),\\
\mbox{ and }\quad 
s(D(a)\tilde{H}_a)=x_{12}\left(\frac{a(1+a)}{1-a}\right)w_{12}(-(1+a))x_{12}(a^{-1}).
\end{eqnarray*}
Thus
\begin{eqnarray*}
\bar{f}([D(a)|\tilde{H}_a])&=& s(D(a))s(\tilde{H}_a)s(D(a)\tilde{H}_a)^{-1}\\
&=&h_{12}(a)x_{12}\left(\frac{1+a}{a(1-a)}\right)
w_{12}\left(\frac{1+a}{-a}\right)x_{12}(a^{-1})x_{12}(-a^{-1})w_{12}(1+a)
x_{12}\left(\frac{a(1+a)}{a-1}\right)\\
&=&h_{12}(a)x_{12}\left(\frac{1+a}{a(1-a)}\right)
w_{12}\left(\frac{1+a}{-a}\right)w_{12}(1+a)
x_{12}\left(\frac{a(1+a)}{a-1}\right)\\
&=&h_{12}(a)w_{12}\left(\frac{1+a}{-a}\right)x_{21}\left(\frac{-a}{1-a^2}\right)w_{12}(1+a)
x_{12}\left(\frac{a(1+a)}{a-1}\right)\\
\end{eqnarray*}
using
\[
x_{12}\left(\frac{1+a}{a(1-a)}\right)^{
w_{12}\left(\frac{1+a}{-a}\right)}= x_{21}\left(\frac{-a}{1-a^2}\right).
\]

Since, by Lemma \ref{lem:conjw},
\[
x_{21}\left(\frac{-a}{1-a^2}\right)^{w_{12}(1+a)}= x_{12}\left(\frac{a(1+a)}{1-a}\right),
\]
this gives 
\begin{eqnarray*}
\bar{f}([D(a)|\tilde{H}_a])&=&h_{12}(a)
w_{12}\left(\frac{1+a}{-a}\right)w_{12}(1+a)
x_{12}\left(\frac{a(1+a)}{1-a}\right)x_{12}\left(\frac{a(1+a)}{a-1}\right)\\
&=&h_{12}(a)
w_{12}\left(\frac{1+a}{-a}\right)w_{12}(1+a)\\
&=&h_{12}(a)h_{12}\left(\frac{1+a}{-a}\right)h_{12}(-(1+a))^{-1}\\
&=&c(a,-(1+a)a^{-1})=c(a,1+a).
\end{eqnarray*}

Now 
\[
s(\tilde{H}_aG_a)=s\left(\matr{\frac{a}{1-a}}{\frac{a^2}{1-a}}{\frac{1}{1+a}}{\frac{a^{-1}}
{1+a}}\right)= x_{12}\left(\frac{a(1+a)}{1-a}\right)w_{12}(-(1+a))x_{12}(a^{-1}).
\]

So
\begin{eqnarray*}
\bar{f}([\tilde{H}_a|G_a])&=& s(\tilde{H}_a)s(G_a)s(\tilde{H}_aG_a)^{-1}\\
&=&x_{12}\left(\frac{1+a}{a(1-a)}\right)
w_{12}\left(\frac{1+a}{-a}\right)x_{12}(a^{-1})w_{12}(-1)x_{12}(a+a^{-1})
x_{12}(-a^{-1})w_{12}(1+a)
x_{12}\left(\frac{a(1+a)}{a-1}\right)\\
&=&x_{12}\left(\frac{1+a}{a(1-a)}\right)
w_{12}\left(\frac{1+a}{-a}\right)x_{12}(a^{-1})w_{12}(-1)x_{12}(a)w_{12}(1+a)
x_{12}\left(\frac{a(1+a)}{a-1}\right)\\ 
&=& w_{12}\left(\frac{1+a}{-a}\right)x_{21}\left(\frac{-a}{1-a^2}\right)w_{12}(-1)
x_{21}(-a^{-1})x_{12}(a)w_{12}(1+a)
x_{12}\left(\frac{a(1+a)}{a-1}\right)\\ 
\end{eqnarray*}
using
\[
x_{12}\left(\frac{1+a}{a(1-a)}\right)^{w_{12}\left(\frac{1+a}{-a}\right)}= x_{21}\left(\frac{-a}{1-a^2}\right)
\mbox{ and }
x_{12}(a^{-1})^{w_{12}(-1)}=x_{21}(-a^{-1}).
\]

Since $x_{12}(-a)w_{12}(a)=x_{21}(-a^{-1})x_{12}(a)$, we thus have

\begin{eqnarray*}
\bar{f}([\tilde{H}_a|G_a])
&=& w_{12}\left(\frac{1+a}{-a}\right)x_{21}\left(\frac{-a}{1-a^2}\right)w_{12}(-1)
x_{12}(-a)w_{12}(a)w_{12}(1+a)
x_{12}\left(\frac{a(1+a)}{a-1}\right)\\ 
&=& w_{12}\left(\frac{1+a}{-a}\right)w_{12}(-1)x_{12}\left(\frac{a}{1-a^2}\right)
x_{12}(-a)w_{12}(a)w_{12}(1+a)
x_{12}\left(\frac{a(1+a)}{a-1}\right).\\
\end{eqnarray*}

Since 
\[
x_{12}\left(\frac{a}{1-a^2}\right)
x_{12}(-a)=x_{12}\left(\frac{a}{1-a^2}-a\right)= x_{12}\left(\frac{a^3}{1-a^2}\right),
\]
we obtain
\begin{eqnarray*} 
\bar{f}([\tilde{H}_a|G_a])
&=& w_{12}\left(\frac{1+a}{-a}\right)w_{12}(-1)
x_{12}\left(\frac{a^3}{1-a^2}\right)w_{12}(a)w_{12}(1+a)
x_{12}\left(\frac{a(1+a)}{a-1}\right).\\
\end{eqnarray*}

The conjugation rules of Corollary \ref{cor:conjw} give 
\[
x_{12}\left(\frac{a^3}{1-a^2}\right)w_{12}(a)w_{12}(1+a)=
w_{12}(a)x_{12}\left(\frac{-a}{1-a^2}\right)w_{12}(1+a)=
w_{12}(a)w_{12}(1+a)x_{12}\left(\frac{a(1+a)}{1-a}\right).
\]

Therefore
\begin{eqnarray*}
\bar{f}([\tilde{H}_a|G_a])
&=& w_{12}\left(\frac{1+a}{-a}\right)w_{12}(-1)
w_{12}(a)w_{12}(1+a)
x_{12}\left(\frac{a(1+a)}{1-a}\right)x_{12}\left(\frac{a(1+a)}{a-1}\right)\\ 
&=& w_{12}\left(\frac{1+a}{-a}\right)w_{12}(-1)
w_{12}(a)w_{12}(1+a)\\
&=&h_{12}\left(\frac{1+a}{-a}\right)h_{12}(a)h_{12}(-(1+a))^{-1}\\
&=& c(-(1+a)a^{-1},a)=c(1+a,a).
\end{eqnarray*}

Putting this together, we get
\begin{eqnarray*}
\bar{f}(\tilde{\Theta}_a)= \bar{f}([\tilde{H}_a|G_a])\cdot \bar{f}([D(a)|\tilde{H}_a])^{-1}
= c(1+a,a)c(a,1+a)^{-1}\\
= c(a^2,1+a)=c((-a)^2,1+a)
= c(-a,1+a)c(1+a,-a)=1.\\
\end{eqnarray*}
\end{enumerate} 

\end{proof}
\section{Applications: generators for $\hoz{2}{\spl{2}{\Z[1/m]}}$}\label{sec:h2}

The general principle is the following:
\begin{lem}\label{lem:ui} 
Let $m=q_1q_2\cdots q_n$ where $q_1,\ldots,q_n$ are distinct primes. 
Suppose that the positive integers $u_{2},\ldots,u_{n}\in \Z[1/m]^\times$ 
satisfy the conditions
\begin{enumerate}
\item $u_i$ is a primitive root modulo $q_i$ for $i\geq 2$,
\item When $i\not= j\in \{ 2,\ldots,n\}$,  
\[
q_i^{v_{q_j}(u_i)}\cong 1\pmod{q_j} .
\] 
\end{enumerate}
Then
there is a direct sum decomposition
\[
\tilde{K}_2(2,\Z[1/m])\cong \Z\oplus  \Z/(q_2-1)\oplus \cdots \oplus\Z/(q_n-1)
\]
with the property that infinite cyclic factor is generated by  $c(-1,-1)$ and 
the factor $\Z/(q_i-1)$  is generated by  $c(u_{i},q_i)$. 
\end{lem}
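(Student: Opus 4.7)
The plan is to combine the structural decomposition from Example \ref{exa:z} with explicit tame-symbol computations, using hypothesis (2) to force a diagonal pattern. Example \ref{exa:z} already gives the abstract isomorphism
\[
\tilde{K}_2(2,\Z[1/m]) \cong \Z \oplus \bigoplus_{i=1}^n \F{q_i}^\times,
\]
in which $c(-1,-1)$ generates the $\Z$-summand and the projection onto the $i$-th torsion factor is the tame symbol $T_{q_i}$. With the standing convention $q_1=2$ (so that $\F{q_1}^\times$ is trivial), this matches the right-hand side of the claim; the substantive task is then to show that $c(u_i,q_i)$ generates the $\Z/(q_i-1)$-summand and that these cyclic subgroups are independent.

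First I would verify that each $c(u_i,q_i)$ lies in the torsion subgroup of $\tilde{K}_2(2,\Z[1/m])$. By the proof of Lemma \ref{lem:tilde}, the projection onto the torsion-free quotient $\Z\cong I^2(\R)$ sends $c(u,v)$ to $\pf{\sgn{u}}\pf{\sgn{v}}$. Since $u_i$ and $q_i$ are both positive, $\pf{1}=0$, so $c(u_i,q_i)$ lies in the finite torsion part $K_2(\Z[1/m])_+$.

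Next I would compute the tame symbols using the formula from Theorem \ref{thm:mv}:
\[
T_{\mathfrak{p}}(c(x,y))=(-1)^{v_{\mathfrak{p}}(x)v_{\mathfrak{p}}(y)}x^{-v_{\mathfrak{p}}(y)}y^{v_{\mathfrak{p}}(x)}\pmod{\mathfrak{p}}.
\]
When $j=i$, the primitive-root hypothesis gives $v_{q_i}(u_i)=0$ and $v_{q_i}(q_i)=1$, so $T_{q_i}(c(u_i,q_i))=u_i^{-1}$, a generator of $\F{q_i}^\times$. When $j\neq i$ and $j\geq 2$, we have $v_{q_j}(q_i)=0$, hence $T_{q_j}(c(u_i,q_i))=q_i^{v_{q_j}(u_i)}$, which is $\equiv 1\pmod{q_j}$ by hypothesis (2). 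Under the identification of the torsion subgroup with $\bigoplus_{i\geq 2}\F{q_i}^\times$ via the tame symbols, the element $c(u_i,q_i)$ thus has coordinates $u_i^{-1}$ in position $i$ and $1$ elsewhere.

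Because this produces a diagonal basis of the torsion subgroup, the cyclic subgroups $\langle c(u_i,q_i)\rangle$ project isomorphically onto distinct summands of order $q_i-1$, and together with $\Z\cdot c(-1,-1)$ they realise the internal direct sum decomposition claimed. The only genuinely delicate point is recognising that hypothesis (2) has been calibrated precisely to annihilate the off-diagonal tame symbols; without it, the same elements would still generate $\tilde{K}_2(2,\Z[1/m])$, but only up to a triangular change of basis, and the direct-sum statement would fail.
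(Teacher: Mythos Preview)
Your proof is correct and follows essentially the same approach as the paper: invoke the decomposition of Example~\ref{exa:z}, then compute the tame symbols $T_{q_j}(c(u_i,q_i))$ to see that the matrix of coordinates is diagonal with generators on the diagonal. Your computation $T_{q_i}(c(u_i,q_i))=u_i^{-1}$ is in fact what the paper's own tame-symbol formula yields (the paper writes $u_i$, a harmless slip since both are primitive roots), and your explicit verification that $c(u_i,q_i)$ has trivial image in the free part is a small improvement in clarity.
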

\begin{proof}
The isomorphism 
\[
\tilde{K}_2(2,\Z[1/m])\cong \Z\oplus\left(\oplus_{i=2}^n\F{q_i}^\times\right)
\]
is induced by the map 
\[
\sigma: \tilde{K}_2(2,\Z[1/m])\to \Z,\quad 
c(a,b)\mapsto
\left\{
\begin{array}{ll}
1,& a<0\mbox{ and }b<0\\
0,& \mbox{ otherwise}
\end{array}
\right.
\]
and the tame symbols $T_{p_i}:K(2,\Q)\to\F{p_i}^\times$.




Now
\begin{eqnarray*}
T_{p_i}(c(u_{i},q_i))=\tau_{q_i}(\{ u_i,q_i\})=u_i\pmod{q_i}=w_i
\end{eqnarray*}
while for $j\not=i$
\begin{eqnarray*}
T_{q_j}(c(u_{i},q_i))=q_i^{v_{q_j}(u_i)}{\pmod{q_j}}\equiv 1\pmod{q_j}.
\end{eqnarray*}
\end{proof}
\begin{rem} It is not known whether there must exist units satisfying condition (1) in general, 
but exceptions, if they exist, are rare.

If units $u_i$ are found satisfying condition (1), then it can always be arranged 
for condition (2) to hold; namely, multiply $u_i$ by a high power of $m_i$ where 
$m_i= (\prod_{k=1}^nq_k)/q_i$.  

\end{rem}

Combining Lemma \ref{lem:ui} with Theorems \ref{thm:z} and \ref{thm:symb}, we deduce:

\begin{cor} Let $m=q_1\cdots q_n$ be distinct primes satisfying 
$q_1<q_2<\cdots <q_n$ and $q_1=2,q_2=3$. Let $u_2,\ldots,u_n$ be as in Lemma \ref{lem:ui}.  
There is a direct sum decomposition
\[
\hoz{2}{\spl{2}{\Z[1/m]}}\cong \Z\oplus \left(\oplus_{i=2}^n \Z/(q_i-1)\Z\right) 
\]
where the first summand corresponds to  the subgroup of 
$\hoz{2}{\spl{2}{\Z[1/m]}}$ generated by the homology class $C(-1,-1)$, 
and the summand $\Z/(q_i-1)\Z$ corresponds to the subgroup generated by the homology class 
$C(u_{i},q_i)$.  
\end{cor}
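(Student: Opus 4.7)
The plan is to assemble four ingredients already established in the paper. Theorem \ref{thm:z} gives the abstract structure of $\hoz{2}{\spl{2}{\Z[1/m]}}$; Theorem \ref{thm:s}(2) states that this structure is realised by the natural injection into $\tilde{K}_2(2,\Z[1/m])$; Lemma \ref{lem:ui} identifies explicit generators of the summands of $\tilde{K}_2(2,\Z[1/m])$ in terms of symbols; and Theorem \ref{thm:symb} identifies the images of the homology classes $C(a,b)_\lambda$ with those symbols over a field.

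First I would observe that since $q_1 = 2$ forces $\F_2^\times = 0$, Theorem \ref{thm:z} specialises to $\hoz{2}{\spl{2}{\Z[1/m]}} \cong \Z \oplus (\oplus_{i=2}^n \Z/(q_i-1))$. By Theorem \ref{thm:s}, this isomorphism is induced by the natural map to $\tilde{K}_2(2,\Z[1/m])$, and by Lemma \ref{lem:ui}, under this map the $\Z$ factor is generated by $c(-1,-1)$ and the $i$-th torsion factor by $c(u_i,q_i)$.

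The remaining point is to check that the explicit homology classes $C(-1,-1)$ and $C(u_i,q_i)$ in $\hoz{2}{\spl{2}{\Z[1/m]}}$ map to $c(-1,-1)$ and $c(u_i,q_i)$ in $\tilde{K}_2(2,\Z[1/m]) \subset K_2(2,\Q)$. Choose $\lambda = 3 \in \Z[1/m]^\times$; since $2 \mid m$, we have $\lambda^2 - 1 = 8 \in \Z[1/m]^\times$, so the cycles $F(a,b)_\lambda$ of Proposition \ref{prop:fab} are defined over $\Z[1/m]$. The functoriality of the construction (the Remark following Proposition \ref{prop:fab}) applied to the inclusion $\Z[1/m] \hookrightarrow \Q$ gives a commutative square
\[
\xymatrix{
\hoz{2}{\spl{2}{\Z[1/m]}} \ar[r] \ar[d] & \hoz{2}{\spl{2}{\Q}} \ar[d]^-{\bar{f}}_-{\cong} \\
\tilde{K}_2(2,\Z[1/m]) \ar@{^{(}->}[r] & K_2(2,\Q)
}
\]
in which the top map sends $C(a,b)_\lambda$ to $C(a,b)_\lambda$, and the right map sends this to $c(a,b)$ by Theorem \ref{thm:symb}. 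Injectivity of the bottom arrow (Lemma \ref{lem:k2o}) forces $C(a,b)_\lambda \mapsto c(a,b)$ in $\tilde{K}_2(2,\Z[1/m])$.

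There is no real obstacle here: the corollary is essentially a bookkeeping exercise combining results from the earlier sections with the explicit identification of generators of $\tilde{K}_2(2,\Z[1/m])$. The only minor subtlety worth flagging is independence of $\lambda$ — but this also follows from the commutative diagram above, since by Theorem \ref{thm:symb} every admissible choice of $\lambda$ produces the same symbol $c(a,b) \in K_2(2,\Q)$, and the injectivity of $\tilde{K}_2(2,\Z[1/m]) \hookrightarrow K_2(2,\Q)$ combined with the isomorphism of Theorem \ref{thm:s} then forces the homology classes themselves to coincide.
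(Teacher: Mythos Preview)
Your proposal is correct and follows the same approach as the paper, which simply states the corollary as an immediate consequence of Lemma \ref{lem:ui} together with Theorems \ref{thm:z} and \ref{thm:symb}; you have supplied the details that the paper leaves implicit. One minor point: the injectivity of the bottom arrow $\tilde{K}_2(2,\Z[1/m]) \hookrightarrow K_2(2,\Q)$ does not need Lemma \ref{lem:k2o} --- it holds by the very definition of $\tilde{K}_2(2,\ntr{S})$ as a subgroup of $K_2(2,K)$.
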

\begin{exa}
In the case $m=6$, we can take $u_2=2$. We deduce that the cyclic factors of 
\[
\hoz{2}{\spl{2}{\Z[1/6]}}\cong \Z\oplus\Z/2
\]
are generated by the homology classes $C(-1,-1)$ and $C(2,3)$.
\end{exa}
\begin{exa} In the case $m=30$, then the units $u_2=2$, $u_3=2$ satisfy the necessary 
congruences. Thus the cyclic factors of 
\[
\hoz{2}{\spl{2}{\Z[1/30]}}\cong \Z\oplus\Z/2\oplus\Z/4
\]
are generated by the homology classes $C(-1,-1)$, $C(2,3)$ and $C(2,5)$. 
\end{exa}

\begin{exa}
By Theorem \ref{thm:z}, we have 
\[
\hoz{2}{\Z[1/42]}\cong \Z\oplus \F{3}^\times\oplus \F{7}^\times\cong\Z\oplus\Z/2\oplus\Z/6.
\] 
The first factor is generated by the homology class $C(-1,-1)$.  Furthermore, $u_2=2=u_3$ satisfy the 
congruences of Lemma \ref{lem:ui}.
It 
follows that the homology classes $C(2,3)$ and $C(2,7)$ generate the second and third cyclic factors. 
\end{exa}
\begin{exa} Let $\omega$ be a primitive cube root of unity and let $p$ be a rational prime which 
is congruent to $1$ modulo $3$. Let $\mathcal{O}=\Z[\omega,\frac{1}{3p}]$. 

Observe that $\omega\in 
\mathcal{O}^\times$ and 
$\omega^2-1= \sqrt{-3}\omega\in \mathcal{O}^\times$ also. Then 
$p\mathcal{O}=\mathfrak{p}_1\mathfrak{p}_2$ where $k(\mathfrak{p}_i)\cong \F{p}$ for $i=1,2$. 
Since $K_2(\Z[\omega])=0$, we have 
\begin{eqnarray*}
K_2(\mathcal{O})\cong \tilde{K}_2(2,\mathcal{O})&\cong &k(\mathfrak{p}_1)^\times\oplus
k(\mathfrak{p}_2)^\times\oplus k(\mathfrak{q})^\times\\
& \cong& \F{p}^\times\oplus\F{p}^\times\oplus\F{3}^\times \\
\end{eqnarray*}
where $\mathfrak{q}=\sqrt{-3}\mathcal{O}$.

By Lemma \ref{lem:h2o} and Theorem \ref{thm:symb} the natural map 
\[
\hoz{2}{\spl{2}{\mathcal{O}}}\to K_2(\mathcal{O})
\]
is surjective and the homology class $C(-\omega,p)$  
maps, via the tame symbol, 
 to the element $-\bar{\omega} \in k(\mathfrak{p}_i)^\times$ of order $6$, while the class 
$C(3,p)$ maps to $\bar{3}\in k(\mathfrak{p}_i)^\times\cong \F{p}^\times$. 
\end{exa}
\bibliography{h2sl2}
\end{document}